\documentclass[10pt]{article}

\usepackage{fullpage}
\usepackage{amsmath,amssymb,amsthm,hyperref,bm}
\usepackage[noadjust]{cite}
\usepackage{tikz, tikz-cd}
\usetikzlibrary{positioning}
\usepackage{mathtools}
\usepackage{verbatim}
\usepackage{multirow}
\usepackage{array}
\usepackage{enumitem} 
\usepackage[english]{babel}

\hypersetup{hidelinks} 


\makeatletter
\renewcommand*\env@matrix[1][\arraystretch]{%
  \edef\arraystretch{#1}%
  \hskip -\arraycolsep
  \let\@ifnextchar\new@ifnextchar
  \array{*\c@MaxMatrixCols c}}
\makeatother


\theoremstyle{definition}
\newtheorem{definition}{Definition}[section]
\newtheorem{remark}[definition]{Remark}

\newtheorem{example}[definition]{Example}

\newtheorem{notation}[definition]{Notation}

\theoremstyle{plain}
\newtheorem{theorem}[definition]{Theorem}
\newtheorem{lemma}[definition]{Lemma}
\newtheorem{corollary}[definition]{Corollary}
\newtheorem{proposition}[definition]{Proposition}








\def\tet{\boxtimes}
\def\L{L(\mathfrak{sl}_2)^+}
\def\sl2{\mathfrak{sl}_2}

\def\bx{\boldsymbol{x}}
\def\by{\boldsymbol{y}}
\def\bz{\boldsymbol{z}}



\begin{document}

\title{\bf The standard generators of the tetrahedron\\ algebra and their look-alikes}

\author {
Jae-Ho Lee\thanks{Department of Mathematics and Statistics, University of North Florida, Jacksonville, FL 32224, U.S.A. 
}}

\maketitle
\newcommand\blfootnote[1]{%
\begingroup
\renewcommand\thefootnote{}\footnote{#1}%
\addtocounter{footnote}{-1}%
\endgroup}
\blfootnote{E-mail address:  \texttt{jaeho.lee@unf.edu}}

\begin{abstract}
\noindent
The tetrahedron algebra $\boxtimes$ is an infinite-dimensional Lie algebra defined by generators $\{x_{ij} \mid i, j \in \{0, 1, 2, 3\}, i \neq j\}$ and some relations, including the Dolan-Grady relations. 
These twelve generators are called standard. 
We introduce a type of element in $\boxtimes$ that ``looks like'' a standard generator.
For mutually distinct $h, i, j, k \in \{0, 1, 2, 3\}$, consider the standard generator $x_{ij}$ of $\boxtimes$. 
An element $\xi \in \boxtimes$ is called $x_{ij}$-like whenever both (i) $\xi$ commutes with $x_{ij}$; (ii) $\xi$ and $x_{hk}$ satisfy a Dolan-Grady relation. 
Pick mutually distinct $i,j,k \in \{0,1,2,3\}$.
In our main result, we find an attractive basis for $\boxtimes$ with the property that every basis element is either $x_{ij}$-like or $x_{jk}$-like or $x_{ki}$-like.
We discuss this basis from multiple points of view.

\bigskip
\noindent
\textbf{Keywords:} tetrahedron algebra; Onsager algebra; three-point $\mathfrak{sl}_2$-loop algebra

\hfil\break
\medskip
\noindent 
\textbf{2020 Mathematics Subject Classification:} 17B65, 17B05
\end{abstract}
\section{Introduction}\label{Intro}
This paper is about a certain Lie algebra, called the tetrahedron algebra. 
This Lie algebra, denoted by $\tet$, was introduced by Hartwig and Terwilliger to provide a presentation of the three-point $\sl2$ loop algebra $\L$ \cite{2007HarTer}.
They defined $\tet$ by generators and relations, and displayed a Lie algebra isomorphism $\tet \to \L$.
We now recall the definition of $\tet$.
Throughout this paper, let $\mathbb{F}$ denote a field with characteristic $0$ and define the set $\mathbb{I}=\{0, 1, 2, 3\}$.

\begin{definition}[{\cite[Definition 1.1]{2007HarTer}}]\label{Def:Tet-alg}
Let $\tet$ denote the Lie algebra over $\mathbb{F}$ that has generators
\begin{equation}\label{standard generators tet}
	\{x_{ij} \mid i,j \in \mathbb{I}, \ i \ne j\}, 
\end{equation}
and the following relations:
\begin{itemize}
	\item[(i)] For distinct $i,j \in \mathbb{I}$,
	\begin{equation}\label{def:tet rel(1)}
		x_{ij} + x_{ji} = 0.
	\end{equation}
	\item[(ii)] For mutually distinct $h,i,j \in \mathbb{I}$,
	\begin{equation}
		[x_{hi}, x_{ij}] = 2x_{hi}+2x_{ij}.
	\end{equation}
	\item[(iii)] For mutually distinct $h,i,j, k\in \mathbb{I}$,
	\begin{equation}\label{Dolan-Grady rels}
		[x_{hi}, [ x_{hi}, [x_{hi}, x_{jk}]]] = 4[x_{hi}, x_{jk}].
	\end{equation}
\end{itemize}
We call $\tet$ the \emph{tetrahedron algebra}.
We call the elements $x_{ij}$ in \eqref{standard generators tet} the \emph{standard generators} for $\tet$.
\end{definition}

\noindent
We observe that $\tet$ has twelve standard generators.
The relations \eqref{Dolan-Grady rels} are known as the \emph{Dolan-Grady} relations.

\begin{remark}
For mutually distinct $i, j, k\in \mathbb{I}$, the standard generators $x_{ij}$, $x_{jk}$, $x_{ki}$ form a basis for a subalgebra of $\tet$ that is isomorphic to the Lie algebra $\mathfrak{sl}_2$; cf. \cite[Corollary 12.4]{2007HarTer}.
\end{remark}

We have some comments on the symmetry of the Lie algebra $\tet$.
Consider the symmetric group $S_4$ on $\mathbb{I}$.
The group $S_4$ acts on the standard generators for $\tet$ by permuting the indices. 
This action induces a group homomorphism $S_4 \to \operatorname{Aut}(\tet)$.
This homomorphism is injective \cite[Corollary 12.7]{2007HarTer}.
We identify each element of $S_4$ with its image under this group homomorphism. 
For example, let $\prime$ denote the permutation $(123) \in S_4$. 
Regarding $\prime$ as an element in $\operatorname{Aut}(\tet)$, we have
\begin{align*}
	& x'_{01} = x_{02}, \qquad \qquad x'_{02} = x_{03}, \qquad \qquad x'_{03} = x_{01}, \\
	& x'_{12} = x_{23}, \qquad \qquad x'_{23} = x_{31}, \qquad \qquad x'_{31} = x_{12}. 
\end{align*}

We have some comments about the history of $\tet$.
Over the years, $\tet$ has been studied in various contexts, including tridiagonal pairs \cite{2007Har, 2007ItoTerLAA, 2008ItoTerCA}, the universal central extension of $\L$ \cite{2007BenTer}, distance-regular graphs \cite{2014MorPas}, the special orthogonal Lie algebra $\mathfrak{so}_4$ \cite{2022MorLAA}, and the Onsager Lie algebra \cite{2007HarTer}.
This Onsager Lie algebra and its connection to $\tet$ play a role in the present paper.
Before discussing this connection in detail, we recall some background on the Onsager Lie algebra.

In \cite{1944Ons}, Lars Onsager introduced an infinite-dimensional Lie algebra in his study of the two-dimensional Ising model in a zero magnetic field.
This Lie algebra, now called the Onsager Lie algebra, has made a seminal contribution to the theory of exactly solved models in statistical mechanics.
 In \cite{1989Per}, Perk gave a presentation of the Onsager Lie algebra involving generators and relations.
We now recall this presentation.

\begin{definition}[\cite{1944Ons,1989Per}]
Let ${O}$ denote the Lie algebra over $\mathbb{F}$ with generators $A, B$ and relations
\begin{align}
	[A, [A, [A, B]]] & = 4[A,B], \label{OnsagerDG(1)}\\
	[B, [B, [B, A]]] & = 4[B,A]. \label{OnsagerDG(2)}
\end{align}
We call ${O}$ the \emph{Onsager Lie algebra}. 
We call $A, B$ the \emph{standard generators} for ${O}$.
Note that the relations \eqref{OnsagerDG(1)}, \eqref{OnsagerDG(2)} are the Dolan-Grady relations.
\end{definition}

\noindent
We now clarify the connection between $O$ and $\tet$.
For mutually distinct $h,i,j,k \in \mathbb{I}$, there exists an injective Lie algebra homomorphism ${O} \to \tet$ that sends
\begin{equation*}
	A \ \longmapsto \ x_{hi}, \qquad \qquad 
	B \ \longmapsto \ x_{jk};
\end{equation*}
cf. \cite[Proposition 4.7, Corollary 12.2]{2007HarTer}.
We call the image of $O$ under this homomorphism an \emph{Onsager subalgebra of $\tet$}.
The Lie algebra $\tet$ has three Onsager subalgebras.
Moreover, the $\mathbb{F}$-vector space $\tet$ is a direct sum of these three Onsager subalgebras; cf. \cite[Proposition 7.8, Theorem 11.6]{2007HarTer}.

In the present paper, we introduce a type of element in $\tet$ that ``looks like'' a standard generator.
For mutually distinct $h,i,j,k \in \mathbb{I}$, consider the standard generator $x_{ij}$ of $\tet$.
An element $\xi \in \tet$ is called $x_{ij}$-like whenever both (i) $\xi$ commutes with $x_{ij}$; (ii) $\xi$ and $x_{hk}$ satisfy a Dolan-Grady relation; see Definition \ref{def:xij-like}.
Pick mutually distinct $i,j,k \in \mathbb{I}$.
In our main result, we find an attractive basis for $\boxtimes$ with the property that every basis element is either $x_{ij}$-like or $x_{jk}$-like or $x_{ki}$-like.
Due to the $S_4$-symmetry on $\tet$, without loss of generality we will state our main result for the case in which $i=1$, $j=2$, $k=3$.

We now summarize our results in more detail.
For distinct $i,j \in \mathbb{I}$, let $X_{ij}$ denote the subset of $\tet$ consisting of $x_{ij}$-like elements. 
It turns out that $X_{ij}$ is a subspace of $\tet$.
We show that the $\mathbb{F}$-vector space $\tet$ is a direct sum of $X_{12}$, $X_{23}$, $X_{31}$.
Next, we present a basis for each of $X_{12}$, $X_{23}$, and $X_{31}$. 
These bases are constructed as follows.
We consider the Onsager subalgebra $\mathcal{O}$ of $\tet$ generated by $x_{12}$ and $x_{03}$.
We recursively define two sequences, $\{a_n\}_{n\geq 0}$ and $\{b_n\}_{n\geq 0}$, of elements in $\mathcal{O}$ as follows:
\begin{align*}
	&& & a_0=x_{12}, && a_1=[x_{03}, a_0], && a_2=[x_{12},a_1], && a_3=[x_{03},a_2], && \ldots  &&&&\\
	&& & b_0=x_{03}, && b_1=[x_{12}, b_0], && b_2=[x_{03},b_1], && b_3=[x_{12},b_2], && \ldots &&&&
\end{align*}
We will show that (i) $a_n, b_n$ are linearly independent when $n$ is even; (ii) $a_n\ne 0$ and $a_n+b_n=0$ when $n$ is odd.
We also show that the elements $a_{2n}, b_{2n}, a_{2n+1}$ $(n\geq 0)$ form a basis for $\mathcal{O}$.
We then display the Lie bracket action on this basis.
Using these basis elements, we construct a basis $\{\bx_n\}_{n \geq 0}$ (resp. $\{\by_n\}_{n \geq 0}$) (resp. $\{\bz_n\}_{n \geq 0}$) for the subspace $X_{12}\cap\mathcal{O}$ (resp. $X_{23}\cap\mathcal{O}$) (resp. $X_{31}\cap\mathcal{O}$).
Furthermore, we show that the following sums are direct:
\begin{align}
	\mathcal{O}	&= (X_{12} \cap \mathcal{O}) + (X_{23} \cap \mathcal{O}) + (X_{31} \cap \mathcal{O}), \label{intro eq(1):O DS}\\
	\mathcal{O}'	&= (X_{23} \cap \mathcal{O'}) + (X_{31} \cap \mathcal{O}') + (X_{12} \cap \mathcal{O}'), \label{intro eq(2):O DS}\\
	\mathcal{O}''	&= (X_{31} \cap \mathcal{O}'') + (X_{12} \cap \mathcal{O}'') + (X_{23} \cap \mathcal{O}''), \label{intro eq(3):O DS}
\end{align}
and
\begin{align}
	X_{12} & = (X_{12} \cap \mathcal{O}) + (X_{12} \cap \mathcal{O}^{\prime}) + (X_{12} \cap \mathcal{O}^{\prime\prime}), \label{intro eq(1):X12}\\
	X_{23} & = (X_{23} \cap \mathcal{O}) + (X_{23} \cap \mathcal{O}^{\prime}) + (X_{23} \cap \mathcal{O}^{\prime\prime}), \label{intro eq(2):X23}\\
	X_{31} & = (X_{31} \cap \mathcal{O}) + (X_{31} \cap \mathcal{O}^{\prime}) + (X_{31} \cap \mathcal{O}^{\prime\prime}), \label{intro eq(3):X31}
\end{align}
where $\mathcal{O}'$ (resp. $\mathcal{O}''$) is the Onsager subalgebra of $\tet$ generated by $x_{23}$ and $x_{01}$ (resp. $x_{31}$ and $x_{02}$).
Using these direct sums, we obtain a basis for each of $X_{12}$, $X_{23}$,  $X_{31}$.
Our findings are summarized in the following table:
\begin{equation*}
{\renewcommand{\arraystretch}{1.5}
\begin{array}{c|ccccc}
\tet	& \mathcal{O} & & \mathcal{O}' & & \mathcal{O}'' \\
\hline
X_{12}	& \{\bx_{n} \}_{n\geq 0} & & \{\bz'_{n} \}_{n\geq 0} & & \{\by''_{n} \}_{n\geq 0} \\
X_{23}	& \{\by_{n} \}_{n\geq 0} & & \{\bx'_{n} \}_{n\geq 0} & & \{\bz''_{n} \}_{n\geq 0} \\
X_{31}	& \{\bz_{n} \}_{n\geq 0} & & \{\by'_{n} \}_{n\geq 0} & & \{\bx''_{n} \}_{n\geq 0} 
\end{array}}
\end{equation*}
This table illustrates the following (i)--(vi):
(i) the sum of the three row-indices is direct and equal to $\tet$;
(ii) the sum of the three column-indices is direct and equal to $\tet$;
(iii) for every entry, the vectors displayed form a basis for the intersection of the row-index and the column-index;
(iv) in each row, the union of the vectors displayed forms a basis for the row-index on the left;
(v) in each column, the union of the vectors displayed forms a basis for the column-index on the top;
(vi) the union of the vectors displayed in all nine entries forms a basis for $\tet$.
We then display the Lie bracket action on the basis $\{\bx_n, \by_n, \bz_n\}_{n \geq 0}$ for $\mathcal{O}$.
We also show how the basis $\{\bx_n, \by_n, \bz_n\}_{n \geq 0}$ is recursively obtained from the standard generators of $\mathcal{O}$.
Finally, we discuss the transition matrices between the two bases $\{a_{2n}, b_{2n}, a_{2n+1}\}_{n\geq0}$ and $\{\bx_n, \by_n, \bz_n\}_{n \geq 0}$ for $\mathcal{O}$.
To obtain our main results, numerous computations are carried out in $\L$ using the Lie algebra isomorphism $\sigma: \tet \to \L$ displayed in Lemma \ref{def: sigma} (cf. \cite[Proposition 6.5]{2007HarTer}).

\smallskip
This paper is organized as follows. 
In Section \ref{pre:L(sl2)+}, we review the three-point $\mathfrak{sl}_2$ loop algebra $\L$ and its relation to $\tet$.
In Section \ref{sec:vvf:L(sl2)+}, we express an element of $\L$ in a vector-valued form; this form is used throughout the paper for computational convenience.
In Section \ref{S:x_ij-like}, we define the concept of an $x_{ij}$-like element in $\tet$ and discuss the image of $X_{ij}$ under the Lie algebra isomorphism $\sigma:\tet \to \L$. 
In addition, we show that the sum $X_{12}+X_{23}+X_{31}$ is direct and equal to $\tet$.
In Section \ref{Sec: Subsp Ui}, we focus on the Onsager subalgebra $\mathcal{O}$ of $\tet$ generated by $x_{12}$ and $x_{03}$. 
We define subspaces $\{U_n\}_{n \geq 0}$ of $\mathcal{O}$ and then decompose $\mathcal{O}$ into a direct sum of those subspaces. 
In Section \ref{sec:L(sl2)+}, we show that $\{a_{2n}, b_{2n}, a_{2n+1}\}_{n\geq 0}$ forms a basis for $\mathcal{O}$ and present the Lie bracket action on this basis.
In Section \ref{Sec:x_ij-like O}, we define the vectors $\{\bx_n\}_{n \geq 0}, \{\by_n\}_{n \geq 0}, \{\bz_n\}_{n \geq 0}$ in $\mathcal{O}$ and show that they form bases for $X_{12}\cap \mathcal{O}$, $X_{23}\cap \mathcal{O}$, $X_{31}\cap \mathcal{O}$, respectively.
We show that the sums \eqref{intro eq(1):O DS}--\eqref{intro eq(3):X31} are direct. 
We then show that $\{\bx_n, \by_n, \bz_n\}_{n \geq 0}$ forms a basis for $\tet$.
In Section \ref{sec:2nd basis O}, we display the Lie bracket action on the basis $\{\bx_n, \by_n, \bz_n\}_{n \geq 0}$ for $\mathcal{O}$.
We also discuss how the basis $\{\bx_n, \by_n, \bz_n\}_{n \geq 0}$ is recursively obtained from the standard generators of $\mathcal{O}$.
In Section \ref{sec:TM}, we discuss the transition matrices between the two bases for $\mathcal{O}$.
This paper concludes with Appendices A and B, which provide some details about the Onsager subalgebras $\mathcal{O}'$ and $\mathcal{O}''$ of $\tet$.

\begin{notation}
Throughout this paper, we denote the set of natural numbers by $\mathbb{N} = \{1, 2, 3, \ldots\}$ and the set of non-negative integers by $\mathbb{N}_0 = \{0, 1, 2, 3, \ldots \}$. 
\end{notation}

\section{The three-point $\mathfrak{sl}_2$ loop algebra $\L$}\label{pre:L(sl2)+}

In this section, we recall the Lie algebra $\mathfrak{sl}_2=\mathfrak{sl}_2(\mathbb{F})$ and the three-point $\mathfrak{sl}_2$ loop algebra $\L$. 
We then revisit how the algebra $\tet$ is related to $\mathfrak{sl}_2$ and $\L$, respectively.
We begin by recalling the Lie algebra $\mathfrak{sl}_2$, which has a basis $e, f, h$ and Lie bracket
\begin{equation*}
	[h,e] = 2e, \qquad \qquad
	[h,f] = -2f, \qquad \qquad 
	[e,f] = h.
\end{equation*}
Define
\begin{equation*}
	x=2e-h, \qquad \qquad
	y=-2f-h, \qquad \qquad 
	z=h.
\end{equation*}
By \cite[Lemma 3.2]{2007HarTer}, $\sl2$ has a basis $x,y,z$ such that
\begin{equation}\label{eq:[x,y]}
	[x,y] = 2x+2y, \qquad \qquad
	[y,z] = 2y+2z, \qquad \qquad 
	[z,x] = 2z+2x.
\end{equation}
We call $x,y,z$ the \emph{equitable basis} for $\sl2$.
By \cite[Lemma 3.4]{2007HarTer}, there exists an automorphism $\prime$ of $\sl2$ such that 
\begin{equation}\label{auto_sl2}
	x' = y, \qquad y'=z, \qquad z' = x.
\end{equation}
Note that the automorphism $\prime$ has order 3.
The Lie algebra $\sl2$ is related to $\tet$ as follows.
Let $h,i,j$ denote mutually distinct elements of $\mathbb{I}$.
By \cite[Proposition 3.6, Corollary 12.1]{2007HarTer} there exists a unique Lie algebra injective homomorphism $\phi:\sl2 \to \tet$ that sends
\begin{equation*}
	x  \ \longmapsto \  x_{hi}, \qquad \qquad 
	y  \ \longmapsto \  x_{ij}, \qquad \qquad 
	z  \ \longmapsto \  x_{jh}.
\end{equation*}

\medskip
Let $t$ be an indeterminate, and define 
\begin{equation*}
	\mathcal{A} = \mathbb{F}[t, t^{-1}, (t-1)^{-1}],
\end{equation*}
as the $\mathbb{F}$-algebra of all Laurent polynomials in $t$ and $(t-1)^{-1}$ with coefficients in $\mathbb{F}$.
By \cite[Lemma 6.2]{2007HarTer}, there exists a unique $\mathbb{F}$-algebra automorphism $\prime$ of $\mathcal{A}$ that sends $t$ to $1-t^{-1}$. 
Observe that
\begin{equation}\label{auto_A}
	t'=1-t^{-1}, \qquad \qquad  t'' = (1-t)^{-1}, \qquad \qquad t'''=t.
\end{equation}

\begin{lemma}\label{lem:basis for A} 
The following {\rm(i)} and {\rm(ii)} hold.
\begin{enumerate}[label=(\roman*), font=\normalfont]
	\item The $\mathbb{F}$-vector space $\mathcal{A}$ has a basis
	\begin{equation*}
		\{1\} \cup \{ t^i, (t')^i, (t'')^i \mid i \in \mathbb{N}\}.
	\end{equation*}
	\item The $\mathbb{F}$-vector space $\mathcal{A}$ satisfies
	\begin{alignat}{6}
	\mathcal{A}	
		& \ = \ && \mathbb{F}[t] & \ + \ & (1-t')  &&\mathbb{F}[t'] & \ + \ & t'' && \mathbb{F}[t''] \label{ds:A(1)}\\
		& \ = \ && \mathbb{F}[t'] & \ + \ & (1-t'') &&\mathbb{F}[t''] &\ + \ & \ t && \mathbb{F}[t] \label{ds:A(2)}\\
		& \ = \ && \mathbb{F}[t''] & \ + \ & \ (1-t)  &&\mathbb{F}[t] & \ + \ &t' && \mathbb{F}[t']. \label{ds:A(3)}
	\end{alignat}
	Each of the sums \eqref{ds:A(1)}--\eqref{ds:A(3)} is direct.
\end{enumerate}
\end{lemma}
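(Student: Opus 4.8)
The plan is to work inside the rational function field $\mathbb{F}(t)$, where $\mathcal{A}=\mathbb{F}[t,t^{-1},(t-1)^{-1}]$ is the localization of $\mathbb{F}[t]$ at the powers of $t$ and of $t-1$. The one fact doing all the work is the \emph{partial fraction decomposition}
\begin{equation*}
\mathcal{A}=\mathbb{F}[t]\ \oplus\ t^{-1}\mathbb{F}[t^{-1}]\ \oplus\ (t-1)^{-1}\mathbb{F}[(t-1)^{-1}].
\end{equation*}
To see this, write a general element of $\mathcal{A}$ as $p(t)/\!\left(t^{a}(t-1)^{b}\right)$ with $p\in\mathbb{F}[t]$ and split off the principal parts at $t=0$ and $t=1$ by ordinary partial fractions over the PID $\mathbb{F}[t]$; the sum is direct because the three summands are distinguished by their orders of pole at $t=0$ and at $t=1$. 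Equivalently, $\{t^{i}\mid i\in\mathbb{N}_{0}\}\cup\{t^{-i},(t-1)^{-i}\mid i\in\mathbb{N}\}$ is a basis of $\mathcal{A}$.

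For part (ii) I will translate this decomposition into the variables $t,t',t''$. From \eqref{auto_A} one has $1-t'=t^{-1}$ and $t''=(1-t)^{-1}=-(t-1)^{-1}$; since $t'$ (resp.\ $t''$) is an invertible affine function of $t^{-1}$ (resp.\ of $(t-1)^{-1}$), it follows that $\mathbb{F}[t']=\mathbb{F}[t^{-1}]$ and $\mathbb{F}[t'']=\mathbb{F}[(t-1)^{-1}]$, hence $(1-t')\mathbb{F}[t']=t^{-1}\mathbb{F}[t^{-1}]$ and $t''\mathbb{F}[t'']=(t-1)^{-1}\mathbb{F}[(t-1)^{-1}]$. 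Thus \eqref{ds:A(1)} is precisely the partial fraction decomposition above, and in particular it is direct. For \eqref{ds:A(2)} and \eqref{ds:A(3)} I will apply the $\mathbb{F}$-algebra automorphism $\prime$ of $\mathcal{A}$: by \eqref{auto_A} it cyclically permutes $t\mapsto t'\mapsto t''\mapsto t$, so it permutes the three summands of \eqref{ds:A(1)} cyclically; applying $\prime$ and then $\prime^{2}$ to \eqref{ds:A(1)} and using $\mathcal{A}'=\mathcal{A}$ yields \eqref{ds:A(2)} and \eqref{ds:A(3)}, directness being preserved since $\prime$ is a linear isomorphism.

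For part (i) I will exhibit a basis inside each summand of \eqref{ds:A(1)} and then perform one triangular change of basis. Here $\{1\}\cup\{t^{i}\mid i\in\mathbb{N}\}$ is a basis of $\mathbb{F}[t]$; $\{(t'')^{i}\mid i\in\mathbb{N}\}$ is a basis of $t''\mathbb{F}[t'']$; and $\{(t')^{i}-1\mid i\in\mathbb{N}\}$ is a basis of $(1-t')\mathbb{F}[t']$ — it is linearly independent by degree in $t'$, and it spans because $(1-t')(t')^{k}=\big((t')^{k}-1\big)-\big((t')^{k+1}-1\big)$, so telescoping recovers every $(t')^{m}-1$. Their union
\begin{equation*}
\{1\}\cup\{t^{i}\mid i\in\mathbb{N}\}\cup\{(t')^{i}-1\mid i\in\mathbb{N}\}\cup\{(t'')^{i}\mid i\in\mathbb{N}\}
\end{equation*}
is therefore a basis of $\mathcal{A}$; replacing each $(t')^{i}-1$ by $(t')^{i}=\big((t')^{i}-1\big)+1$ is an invertible substitution (it only adds the basis vector $1$ to each of these elements), so the set displayed in the statement is again a basis.

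The only genuine subtlety I anticipate is the bookkeeping around the constant term: every $(t')^{i}$ carries a $+1$, while $1$ is itself listed as a basis element, so one must not double-count it. The triangular substitution above handles this cleanly; alternatively one can argue linear independence directly by valuations, first using the pole at $t=1$ to kill all $(t'')$-coefficients (these are the only terms with a pole there), then the pole at $t=0$ to kill all $(t')$-coefficients, and finally reading off the remaining relation in $\mathbb{F}[t]$. Everything else is routine manipulation with the substitutions $t'=1-t^{-1}$ and $t''=(1-t)^{-1}$.
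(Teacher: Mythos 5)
Your proof is correct. The paper disposes of this lemma with the single line ``By construction,'' so there is nothing to compare against in detail; your argument is a complete and accurate filling-in of what that phrase leaves implicit. The key identification is exactly right: with $t'=1-t^{-1}$ and $t''=(1-t)^{-1}$ one has $\mathbb{F}[t']=\mathbb{F}[t^{-1}]$, $(1-t')\mathbb{F}[t']=t^{-1}\mathbb{F}[t^{-1}]$, $\mathbb{F}[t'']=\mathbb{F}[(t-1)^{-1}]$, and $t''\mathbb{F}[t'']=(t-1)^{-1}\mathbb{F}[(t-1)^{-1}]$, so \eqref{ds:A(1)} is precisely the partial fraction decomposition of the localization $\mathcal{A}=\mathbb{F}[t,t^{-1},(t-1)^{-1}]$, with directness coming from uniqueness of the principal parts at $t=0$ and $t=1$. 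Deducing \eqref{ds:A(2)} and \eqref{ds:A(3)} by applying the automorphism $\prime$, which cyclically permutes $t\mapsto t'\mapsto t''\mapsto t$ and hence the three summands, is clean and matches the spirit of how the paper uses $\prime$ elsewhere. Your handling of part (i) is also sound: the telescoping identity $(1-t')(t')^{k}=\bigl((t')^{k}-1\bigr)-\bigl((t')^{k+1}-1\bigr)$ shows $\{(t')^{i}-1\mid i\in\mathbb{N}\}$ is a basis of $(1-t')\mathbb{F}[t']$, and adding the basis vector $1$ to each of these is an invertible (unitriangular) change of basis, which correctly dodges the one real pitfall here, namely double-counting the constant $1$.
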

\begin{proof}
By construction.
\end{proof}

We now recall the three-point $\sl2$ loop algebra.
\begin{definition}[{\cite[Definition 6.1]{2007HarTer}}] \label{Def:3pt sl2-loop alg}
Let $L(\sl2)^+$ denote the Lie algebra over $\mathbb{F}$ consisting of the $\mathbb{F}$-vector space $\sl2 \otimes \mathcal{A}$, where $\otimes=\otimes_{\mathbb{F}}$, and Lie bracket
\begin{equation}\label{def:eq[u,v]x(ab)}
	[u\otimes a, v \otimes b] = [u,v] \otimes ab, \qquad \qquad u,v \in \sl2, \qquad a, b \in \mathcal{A}. 
\end{equation}
We call $L(\sl2)^+$ the \emph{three-point $\sl2$ loop algebra}.
\end{definition}

\noindent
We note that the algebra $\L$ is a right $\mathcal{A}$-module with the action map: 
\begin{align*}
	\L \times \mathcal{A} \ \longrightarrow \ \L, \qquad 
	(u\otimes a , b) \ \longmapsto \ u\otimes ab.
\end{align*}

\begin{lemma}\label{lem:basis for L(sl2)+} 
The following {\rm(i)} and {\rm(ii)} hold.
\begin{enumerate}[label=(\roman*), font=\normalfont]
	\item The $\mathbb{F}$-vector space $L(\sl2)^+$ has a basis
	\begin{align*}
		\{ x\otimes1, y\otimes1, z\otimes 1 \} 
		&\cup \{ x\otimes t^i, y\otimes t^i, z\otimes t^i \mid i \in \mathbb{N} \} \\
		&\cup \{ x\otimes (t')^i, y\otimes (t')^i, z\otimes (t')^i \mid i \in \mathbb{N} \} \\
		&\cup \{ x\otimes (t'')^i, y\otimes (t'')^i, z\otimes (t'')^i \mid i \in \mathbb{N} \}.
	\end{align*}

	\item The $\mathbb{F}$-vector space $L(\sl2)^+$ satisfies
	\begin{equation}\label{ds:L(sl2)+}
	\begin{array}{llllllll}
	\L	& = & & x\otimes \mathbb{F}[t] & + &x\otimes (1-t')\mathbb{F}[t'] & + & x\otimes t''\mathbb{F}[t'']  \\
		&  & + & y\otimes \mathbb{F}[t']  & + & y\otimes (1-t'')\mathbb{F}[t''] & + & y\otimes t\mathbb{F}[t] \\
		&  & + & z\otimes \mathbb{F}[t'']  & + & z\otimes (1-t)\mathbb{F}[t] & + & z\otimes t'\mathbb{F}[t']. 
	\end{array}
	\end{equation}
	The sum \eqref{ds:L(sl2)+} is direct.
\end{enumerate}
\end{lemma}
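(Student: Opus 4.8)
The plan is to prove Lemma~\ref{lem:basis for L(sl2)+} by transporting the corresponding statement for $\mathcal{A}$ from Lemma~\ref{lem:basis for A} through the tensor product $\sl2\otimes\mathcal{A}$. Since $\L=\sl2\otimes\mathcal{A}$ as $\mathbb{F}$-vector spaces and $\{x,y,z\}$ is a basis for $\sl2$, any basis $\{c_\lambda\}$ of $\mathcal{A}$ yields a basis $\{x\otimes c_\lambda\}\cup\{y\otimes c_\lambda\}\cup\{z\otimes c_\lambda\}$ for $\L$; this is the standard fact that if $\{u_1,u_2,u_3\}$ and $\{c_\lambda\}$ are bases of the tensor factors, then $\{u_m\otimes c_\lambda\}$ is a basis of the tensor product. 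Applying this to the basis of $\mathcal{A}$ in Lemma~\ref{lem:basis for A}(i) gives part~(i) immediately.

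For part~(ii), I would argue similarly but factor-by-factor. The decomposition \eqref{ds:L(sl2)+} is exactly $x\otimes(\text{line 1 of \eqref{ds:A(2)}})$ $+$ $y\otimes(\text{line 1 of \eqref{ds:A(3)}})$ $+$ $z\otimes(\text{line 1 of \eqref{ds:A(1)}})$, after reading off the rows of \eqref{ds:A(1)}--\eqref{ds:A(3)} and matching them to the rows of \eqref{ds:L(sl2)+}. More precisely, for a fixed $u\in\{x,y,z\}$ and a direct sum $\mathcal{A}=\mathcal{A}_1+\mathcal{A}_2+\mathcal{A}_3$ of subspaces, the subspace $u\otimes\mathcal{A}=\sum_m u\otimes\mathcal{A}_m$ is a direct sum, since $u\otimes(-)$ is an injective linear map from $\mathcal{A}$ to $\L$ (as $u\ne 0$ and we may extend $\{u\}$ to a basis of $\sl2$). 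Then, because $x\otimes\mathcal{A}$, $y\otimes\mathcal{A}$, $z\otimes\mathcal{A}$ already form a direct sum decomposition of $\L$ (basis $\{x,y,z\}$ of $\sl2$), the total nine-term sum in \eqref{ds:L(sl2)+} is direct and equals $\L$.

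The one point requiring a little care, rather than a pure formality, is bookkeeping: checking that the three rows of \eqref{ds:L(sl2)+} really are $x\otimes$(row~1 of the version of \eqref{ds:A(2)}), $y\otimes$(row~1 of \eqref{ds:A(3)}), and $z\otimes$(row~1 of \eqref{ds:A(1)}), i.e.\ that the cyclic shifts of $t,t',t''$ in Lemma~\ref{lem:basis for A}(ii) are lined up with the cyclic labels $x,y,z$ consistently with the automorphism $\prime$ of $\sl2$ in \eqref{auto_sl2} and of $\mathcal{A}$ in \eqref{auto_A}. I would phrase this by noting that $\L$ carries the automorphism $u\otimes a\mapsto u'\otimes a'$, under which the $x$-row, $y$-row, $z$-row of \eqref{ds:L(sl2)+} are cyclically permuted, so it suffices to verify the $x$-row, namely $x\otimes\mathcal{A}=x\otimes\mathbb{F}[t]+x\otimes(1-t')\mathbb{F}[t']+x\otimes t''\mathbb{F}[t'']$, which is precisely \eqref{ds:A(1)} tensored with $x$.

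The proof is therefore short: invoke Lemma~\ref{lem:basis for A}, tensor with the basis $\{x,y,z\}$ of $\sl2$, and use that $u\otimes(-)$ preserves directness of sums for each nonzero $u$, together with the directness of $\sl2=\mathbb{F}x\oplus\mathbb{F}y\oplus\mathbb{F}z$. The main (very mild) obstacle is purely notational: getting the cyclic correspondence between $\{x,y,z\}$ and $\{t,t',t''\}$ correct in each of the nine summands of \eqref{ds:L(sl2)+}, which the $\prime$-symmetry reduces to a single case. No genuine computation is needed beyond what Lemma~\ref{lem:basis for A} already provides.
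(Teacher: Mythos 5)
Your proposal is correct and is essentially the paper's own argument: the paper disposes of this lemma with ``By construction,'' i.e.\ exactly the observation that $\L=\sl2\otimes\mathcal{A}$, so the basis of Lemma \ref{lem:basis for A}(i) tensored with the basis $x,y,z$ gives (i), and the row-by-row decompositions \eqref{ds:A(1)}--\eqref{ds:A(3)} tensored with $x,y,z$ respectively give (ii), which is what you spell out (including the correct use of injectivity of $u\otimes(-)$ and the $\prime$-symmetry). One small internal slip: in your second paragraph you pair $x$ with \eqref{ds:A(2)}, $y$ with \eqref{ds:A(3)}, $z$ with \eqref{ds:A(1)}, whereas the correct pairing is $x\leftrightarrow\eqref{ds:A(1)}$, $y\leftrightarrow\eqref{ds:A(2)}$, $z\leftrightarrow\eqref{ds:A(3)}$, as you yourself state correctly when verifying the $x$-row before applying $\prime$.
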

\begin{proof}
By construction.
\end{proof}

We recall how the algebra $\tet$ is related to $\L$.
\begin{lemma}[{\cite[Proposition 6.5]{2007HarTer}}]\label{def: sigma}
There exists a unique Lie algebra isomorphism $\sigma: \tet \to L(\sl2)^+$ that sends
\begin{align*}
	&& x_{12} \quad & \longmapsto \quad x \otimes 1, && x_{03} \quad \longmapsto \quad  y\otimes t + z \otimes (t-1), &&\\
	&& x_{23} \quad & \longmapsto \quad y \otimes 1, && x_{01} \quad \longmapsto \quad  z\otimes t' + x \otimes (t'-1), &&\\
	&& x_{31} \quad & \longmapsto \quad z \otimes 1, && x_{02} \quad \longmapsto \quad  x\otimes t'' + y \otimes (t''-1), &&
\end{align*}
where $x,y,z$ is the equitable basis for $\sl2$.
\end{lemma}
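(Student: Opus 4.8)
The plan is to construct $\sigma$ from the presentation of $\tet$, then show it is surjective, and finally show it is injective, the last step being the real content. For the construction, name the six elements on the right-hand side, say $\xi_{12}=x\otimes 1$, $\xi_{03}=y\otimes t+z\otimes(t-1)$, and their cyclic rotates $\xi_{23},\xi_{01},\xi_{31},\xi_{02}$, and set $\xi_{ji}=-\xi_{ij}$ for the remaining six indices. By Definition~\ref{Def:Tet-alg} it suffices to verify that these twelve elements of $\L$ satisfy relation~\eqref{def:tet rel(1)} (immediate from $\xi_{ji}=-\xi_{ij}$), relation~(ii) of that definition, and the Dolan--Grady relations~\eqref{Dolan-Grady rels}; the presentation then yields a Lie algebra homomorphism $\sigma\colon\tet\to\L$ with $\sigma(x_{ij})=\xi_{ij}$, and it is unique because the $x_{ij}$ generate $\tet$. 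Verifying relation~(ii) and~\eqref{Dolan-Grady rels} is a direct computation in $\L$ using the bracket rule~\eqref{def:eq[u,v]x(ab)} and the equitable relations~\eqref{eq:[x,y]}; I would cut the casework down drastically using the order-$3$ automorphism of $\sl2$ from~\eqref{auto_sl2}, the order-$3$ automorphism of $\mathcal{A}$ from~\eqref{auto_A}, and the automorphism of $\L$ they jointly induce (which permutes the $\xi_{ij}$ in the obvious cyclic fashion), together with the residual $S_4$-symmetry of $\tet$, so that only a couple of genuinely different cases remain.

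For surjectivity, note that $\operatorname{im}\sigma$ is a Lie subalgebra of $\L$ containing $x\otimes 1$, $y\otimes 1$, $z\otimes 1$ (the images of $x_{12},x_{23},x_{31}$) together with $\xi_{03},\xi_{01},\xi_{02}$. Bracketing $\xi_{03}$ repeatedly with $x\otimes 1$ produces $y\otimes t^{i}$ and $z\otimes t^{i}$ for all $i\in\mathbb{N}$, and the cyclic rotations of this computation give the $(t')$- and $(t'')$-powers; comparing with the basis in Lemma~\ref{lem:basis for L(sl2)+}(i) shows $\operatorname{im}\sigma=\L$.

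Injectivity is the main obstacle, and I would obtain it by matching a direct-sum decomposition of $\tet$ with one of $\L$. On the $\tet$ side, recall from the introduction that, as $\mathbb{F}$-vector spaces, $\tet$ is the direct sum of its three Onsager subalgebras $\mathcal{O}$, $\mathcal{O}'$, $\mathcal{O}''$ generated by $\{x_{12},x_{03}\}$, $\{x_{23},x_{01}\}$, $\{x_{31},x_{02}\}$. On the $\L$ side, regrouping the nine summands of~\eqref{ds:L(sl2)+} exhibits $\L$ as a direct sum $W\oplus W'\oplus W''$, where $W=x\otimes\mathbb{F}[t]+y\otimes t\mathbb{F}[t]+z\otimes(1-t)\mathbb{F}[t]$ and $W'$, $W''$ are its images under the order-$3$ automorphism of $\L$. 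A short closure check, using~\eqref{eq:[x,y]} and $t(1-t)\mathbb{F}[t]\subseteq t\mathbb{F}[t]\cap(1-t)\mathbb{F}[t]$, shows $W$ is a subalgebra containing $\sigma(x_{12})$ and $\sigma(x_{03})$, whence $\sigma(\mathcal{O})\subseteq W$, and likewise $\sigma(\mathcal{O}')\subseteq W'$ and $\sigma(\mathcal{O}'')\subseteq W''$. Now if $v=v_{1}+v_{2}+v_{3}$ with $v_{1}\in\mathcal{O}$, $v_{2}\in\mathcal{O}'$, $v_{3}\in\mathcal{O}''$ and $\sigma(v)=0$, then directness of $W\oplus W'\oplus W''$ forces $\sigma(v_{i})=0$ for each $i$, so it remains only to prove $\sigma|_{\mathcal{O}}$ is injective (the other two cases follow by the cyclic symmetry). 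Since $\mathcal{O}$ is a copy of the Onsager algebra $O$ (via the injection $O\to\tet$ recalled in the introduction), this reduces to checking that the map $O\to\L$ with $A\mapsto x\otimes 1$ and $B\mapsto y\otimes t+z\otimes(t-1)$ is injective; I would deduce this from the known realization of the Onsager algebra inside a loop algebra over $\sl2$, or directly by showing that the iterated brackets of $x\otimes 1$ and $y\otimes t+z\otimes(t-1)$ spanning $\sigma(\mathcal{O})$ are linearly independent in $W$ (tracking the extreme powers of $t$ that occur). Assembling the three parts shows $\sigma$ is a Lie algebra isomorphism. Since the statement is exactly \cite[Proposition~6.5]{2007HarTer}, one could instead simply cite it; the sketch above records how it follows from the material already assembled in this section.
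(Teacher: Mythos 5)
The paper does not actually prove this lemma; it is quoted verbatim from \cite[Proposition~6.5]{2007HarTer}, so the ``paper's proof'' is the citation you mention in your last sentence. Your sketch is, in outline, a reconstruction of the cited proof's strategy: define the twelve candidate images, verify the defining relations of Definition~\ref{Def:Tet-alg} to get a homomorphism (uniqueness being automatic since the $x_{ij}$ generate), prove surjectivity by generation, and prove injectivity by matching the three Onsager subalgebras of $\tet$ with the three summands $W$, $W'$, $W''$ obtained by regrouping \eqref{ds:L(sl2)+}. The construction and surjectivity steps are essentially fine, with two small corrections: bracketing $\xi_{03}$ repeatedly with $x\otimes 1$ alone does not reach all of $u\otimes t^i$ --- you also need brackets among the degree-one elements $x\otimes t, y\otimes t, z\otimes t$ before comparing with Lemma~\ref{lem:basis for L(sl2)+}(i) --- and the ``residual $S_4$-symmetry of $\tet$'' cannot be used to shorten the relation check, since that check takes place among the $\xi_{ij}$ inside $\L$, where before $\sigma$ exists the only available symmetry is the order-three automorphism of Lemma~\ref{lem:auto L(sl2)+}.

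The genuine issue is the injectivity step. As inputs you invoke the directness of $\tet=\mathcal{O}\oplus\mathcal{O}'\oplus\mathcal{O}''$ and (implicitly) the injectivity of the Onsager algebra map into $\tet$; in \cite{2007HarTer} these are Theorem~11.6 and Corollary~12.2, which are established together with, respectively as consequences of, Proposition~6.5 itself, so using them to prove it is circular unless you supply independent arguments. The circularity is avoidable: the plain sum $\tet=\mathcal{O}+\mathcal{O}'+\mathcal{O}''$ (\cite[Proposition~7.8]{2007HarTer}) suffices for your decomposition argument, since $\sigma(v_1)+\sigma(v_2)+\sigma(v_3)=0$ with $\sigma(v_i)$ in the direct summands $W,W',W''$ already forces each $\sigma(v_i)=0$; and injectivity of $\sigma$ on $\mathcal{O}$ does not need Corollary~12.2, because the abstract Onsager algebra ${O}$ surjects onto $\mathcal{O}$, so it is equivalent to injectivity of the composite ${O}\to\L$, $A\mapsto x\otimes 1$, $B\mapsto y\otimes t+z\otimes(t-1)$. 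But that faithfulness statement is precisely the hard content of the theorem, and your sketch only gestures at it (``known realization'', ``tracking extreme powers of $t$''): making it precise requires a spanning set of ${O}$ by iterated brackets (whose closure under bracketing uses the Dolan--Grady relations) together with an explicit linear-independence computation of their images --- essentially what Sections~\ref{Sec: Subsp Ui}--\ref{sec:L(sl2)+} of this paper carry out for $\mathcal{O}^\sigma$. So your outline matches how the cited result is actually proved, but as written it either leans on downstream corollaries of that result or leaves its crux unproved; the clean options are to cite \cite[Proposition~6.5]{2007HarTer} as the paper does, or to route the injectivity through the independent computation just described.
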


\begin{definition}\label{def:sg L}
Recall the standard generators of $\tet$ from \eqref{standard generators tet}.
Let $\sigma$ be the Lie algebra isomorphism from $\tet \to L(\sl2)^+$ as in Lemma \ref{def: sigma}.
By the \emph{standard generator of $\L$}, we mean the $\sigma$-image of the standard generator for $\tet$.
\end{definition}

\begin{lemma}\label{lem:[x_ij,x_hk]^sigma}
The map $\sigma$ sends
\begin{align*}
	[x_{12}, x_{03}]\quad & \longmapsto \quad 2x \otimes 1 + 2y \otimes t + 2z \otimes (1-t), \\
	[x_{23}, x_{01}]\quad & \longmapsto \quad 2y \otimes 1 + 2z \otimes t' + 2x \otimes (1-t'), \\
	[x_{31}, x_{02}]\quad & \longmapsto \quad 2z \otimes 1 + 2x \otimes t'' + 2y \otimes (1-t'').
\end{align*}
\end{lemma}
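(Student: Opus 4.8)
The plan is to compute the three $\sigma$-images directly from Lemma \ref{def: sigma} using only the definition of the Lie bracket in $\L$ given in \eqref{def:eq[u,v]x(ab)} together with the equitable relations \eqref{eq:[x,y]}. Consider the first assertion. By Lemma \ref{def: sigma} we have $\sigma(x_{12}) = x\otimes 1$ and $\sigma(x_{03}) = y\otimes t + z\otimes(t-1)$, so since $\sigma$ is a Lie algebra homomorphism,
\begin{equation*}
	\sigma([x_{12},x_{03}]) = [x\otimes 1,\ y\otimes t + z\otimes(t-1)] = [x,y]\otimes t + [x,z]\otimes(t-1).
\end{equation*}
Now apply \eqref{eq:[x,y]}: $[x,y] = 2x+2y$ and $[x,z] = -[z,x] = -2z-2x$. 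Substituting and collecting the $x$-terms gives $2x\otimes t - 2x\otimes(t-1) = 2x\otimes 1$ (using $\mathcal{A}$-linearity in the second slot), while the $y$- and $z$-terms give $2y\otimes t$ and $-2z\otimes(t-1) = 2z\otimes(1-t)$ respectively. This yields $2x\otimes 1 + 2y\otimes t + 2z\otimes(1-t)$, as claimed.

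The second and third assertions follow by the same computation after applying the order-$3$ automorphism $\prime$. Concretely, one either repeats the bracket computation with $\sigma(x_{23}) = y\otimes 1$, $\sigma(x_{01}) = z\otimes t' + x\otimes(t'-1)$ (using $[y,z]=2y+2z$ and $[y,x] = -2x-2y$), and similarly for $\sigma(x_{31})$, $\sigma(x_{02})$; or one observes that $\sigma$ intertwines the automorphism $\prime$ of $\tet$ with the automorphism of $\L$ induced by \eqref{auto_sl2} on $\sl2$ and \eqref{auto_A} on $\mathcal{A}$, so the second and third lines are the images of the first line under one and two applications of $\prime$ — noting $x'=y$, $y'=z$, $z'=x$ and $t' = 1-t^{-1}$ combined with $t\mapsto t'$ under the cyclic relabeling coming from the table in Lemma \ref{def: sigma}. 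Either route is a short check.

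There is essentially no obstacle here: the only thing to be careful about is bookkeeping of signs (from $x_{ij}+x_{ji}=0$ and from $[x,z]=-[z,x]$) and the rewriting $a\otimes t - a\otimes(t-1) = a\otimes 1$ in the first slot via the right $\mathcal{A}$-module structure. If I wanted to present the symmetry argument rigorously rather than just recomputing, the one point needing a sentence of justification is that $\sigma \circ {\prime} = {\prime} \circ \sigma$ as maps $\tet \to \L$, which can be verified on the standard generators $x_{12}, x_{03}$ (and their $\prime$-orbits) directly from Lemma \ref{def: sigma} and \eqref{auto_sl2}, \eqref{auto_A}; but for a statement this elementary it is cleaner simply to display all three bracket computations.
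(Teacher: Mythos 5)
Your computation is correct and is exactly what the paper intends by its one-line proof (``Use Lemma \ref{def: sigma}''): apply $\sigma$ to the generators and expand the bracket via \eqref{def:eq[u,v]x(ab)} and \eqref{eq:[x,y]}. Note also that the intertwining $\sigma\circ(123)={}\prime\circ\sigma$ you flag as needing justification is already recorded in Lemma \ref{lem:auto L(sl2)+}, so the symmetry route requires no extra verification.
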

\begin{proof}
Use Lemma \ref{def: sigma}.
\end{proof}

\begin{lemma}[{\cite[Lemma 6.7]{2007HarTer}}]\label{lem:auto L(sl2)+}
There exists an automorphism $\prime$ of $L(\sl2)^+$ that satisfies
\begin{equation*}
	(u \otimes a)' = u' \otimes a', \qquad \qquad u\in \sl2, \qquad a\in \mathcal{A},
\end{equation*}
where $u'$ is from \eqref{auto_sl2} and $a'$ is from \eqref{auto_A}.
This automorphism has order three. 
Moreover, the following diagram commutes:
\begin{equation*}
	\begin{tikzcd}
	\tet \arrow{r}{\sigma} \arrow[swap]{d}{(123)} & L(\sl2)^+ \arrow{d}{\prime} \\%
	\tet \arrow{r}{\sigma}& L(\sl2)^+
	\end{tikzcd}
\end{equation*}
\end{lemma}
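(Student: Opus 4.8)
\emph{Proof plan.} The plan is to realize the desired map as the tensor product of the order-three automorphism $\prime$ of $\sl2$ from \eqref{auto_sl2} with the order-three automorphism $\prime$ of $\mathcal{A}$ from \eqref{auto_A}, and then to verify compatibility with $\sigma$ by evaluating on the standard generators of $\tet$. First I would note that, since $\prime\colon\sl2\to\sl2$ and $\prime\colon\mathcal{A}\to\mathcal{A}$ are $\mathbb{F}$-linear bijections, there is a well-defined $\mathbb{F}$-linear bijection $\prime\colon \sl2\otimes\mathcal{A}\to\sl2\otimes\mathcal{A}$ with $(u\otimes a)'=u'\otimes a'$, whose inverse is induced by the inverses on the two factors. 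To see that this map respects the Lie bracket \eqref{def:eq[u,v]x(ab)}, it suffices to check on pure tensors: for $u,v\in\sl2$ and $a,b\in\mathcal{A}$,
\begin{equation*}
	[(u\otimes a)',(v\otimes b)'] = [u',v']\otimes a'b' = [u,v]'\otimes (ab)' = ([u,v]\otimes ab)' = [u\otimes a, v\otimes b]',
\end{equation*}
where the middle equality uses that $\prime$ is a Lie algebra automorphism of $\sl2$ and an $\mathbb{F}$-algebra automorphism of $\mathcal{A}$. Hence $\prime$ is an automorphism of $L(\sl2)^+$ with the stated action on pure tensors.

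Next I would check that this automorphism has order three. Since $(\prime)^3=\mathrm{id}$ on $\sl2$ (noted after \eqref{auto_sl2}) and $(\prime)^3=\mathrm{id}$ on $\mathcal{A}$ (immediate from $t'''=t$ in \eqref{auto_A}), the induced map satisfies $(\prime)^3=\mathrm{id}$ on $\sl2\otimes\mathcal{A}$. It is not the identity, since $(x\otimes 1)'=y\otimes 1\ne x\otimes 1$. Therefore the order is exactly three.

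Finally, for the commuting square, both $\sigma\circ(123)$ and $\prime\circ\sigma$ are Lie algebra homomorphisms $\tet\to L(\sl2)^+$, so it is enough to see that they agree on a generating set of $\tet$, and the six standard generators $x_{12},x_{23},x_{31},x_{03},x_{01},x_{02}$ (one for each $2$-subset of $\mathbb{I}$) will do. For each such generator I would compute the two sides directly: one side by permuting indices via $(123)$ and then applying $\sigma$ through Lemma \ref{def: sigma}; the other by applying $\sigma$ and then the tensor-product automorphism $\prime$, using \eqref{auto_A} and the fact that $\prime$ fixes the constant $1$ (so, e.g., $(t-1)'=t'-1$). For instance, $(123)$ sends $x_{03}$ to $x_{01}$, and $\sigma$ sends $x_{01}$ to $z\otimes t'+x\otimes (t'-1)$; on the other hand $\sigma$ sends $x_{03}$ to $y\otimes t+z\otimes (t-1)$, and $\prime$ sends this to $y'\otimes t'+z'\otimes (t-1)'=z\otimes t'+x\otimes (t'-1)$. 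The remaining five generators are handled the same way. The only step requiring any care is this case-by-case verification, where one must track the index permutation and the induced substitution on $\mathcal{A}$ simultaneously; there is no conceptual obstacle, and the routine check completes the proof.
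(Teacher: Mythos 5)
Your proof is correct: the tensor-product construction of $\prime$, the order-three check, and the verification of the commuting square on the six standard generators (using that both $\sigma\circ(123)$ and $\prime\circ\sigma$ are Lie algebra homomorphisms) are exactly the standard argument. The paper itself offers no proof here — it simply cites \cite[Lemma 6.7]{2007HarTer} — and your verification matches how that result is established in the cited source, so nothing further is needed.
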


\section{Vector-valued form in $L(\mathfrak{sl}_2)^+$}\label{sec:vvf:L(sl2)+}

Recall the three-point $\mathfrak{sl}_2$ loop algebra $L(\mathfrak{sl}_2)^+$.
For an element $x\otimes f + y\otimes g + z\otimes h$ in $L(\mathfrak{sl}_2)^+$, we represent it in vector-valued form as follows:
\begin{equation}\label{eq:vvf}
	x\otimes f + y\otimes g + z\otimes h 
	\qquad \longleftrightarrow \qquad
	\begin{pmatrix} f \\ g \\ h \end{pmatrix}.
\end{equation}
For the rest of this paper, we often identify an element in $L(\mathfrak{sl}_2)^+$ with its corresponding vector-valued form as shown in \eqref{eq:vvf}.
For example, the element $x\otimes (t-1) + y\otimes t^{-1} + z\otimes (t^2+1)$ is the same expression as $\begin{pmatrix} t-1 \\ t^{-1} \\ t^2+1\end{pmatrix}$.
Recall the automorphism $\prime$ of $L(\sl2)^+$ from Lemma \ref{lem:auto L(sl2)+}.
Then we have
\begin{equation*}
	\begin{pmatrix} f \\ g \\ h \end{pmatrix}'
	= \begin{pmatrix} h' \\ f' \\ g' \end{pmatrix}.
\end{equation*}
The following lemma is useful for computing commutators of two vectors in $L(\sl2)^+$.
\begin{lemma}\label{v-val comp}
For $f, g,h  \in \mathcal{A}$, the following {\rm(i)}--{\rm(iii)} hold.
\begin{enumerate}[label=(\roman*), font=\normalfont]
	\item $\left[\begin{pmatrix} f \\ 0 \\ 0 \end{pmatrix}, \begin{pmatrix} 0 \\ g \\ 0 \end{pmatrix}\right] = 2\begin{pmatrix} fg \\ fg \\ 0 \end{pmatrix}.$
	\item $\left[\begin{pmatrix} 0 \\ g \\ 0 \end{pmatrix}, \begin{pmatrix} 0 \\ 0 \\ h \end{pmatrix}\right] = 2\begin{pmatrix} 0 \\ gh \\ gh  \end{pmatrix}.$
	\item $\left[\begin{pmatrix} 0 \\ 0 \\ h  \end{pmatrix}, \begin{pmatrix} f \\ 0 \\ 0 \end{pmatrix}\right] = 2\begin{pmatrix} fh \\ 0 \\ fh \end{pmatrix}.$
\end{enumerate}
\end{lemma}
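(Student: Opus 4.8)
The plan is to verify each of the three identities by a direct computation in the Lie algebra $\L$, reducing everything to the defining bracket rule \eqref{def:eq[u,v]x(ab)} together with the bracket relations \eqref{eq:[x,y]} among the equitable basis $x,y,z$ of $\sl2$. First I would recall that in the vector-valued notation of \eqref{eq:vvf}, the three vectors on the left-hand sides are $x\otimes f$, $y\otimes g$, and $z\otimes h$ respectively; so part (i) asks for $[x\otimes f,\, y\otimes g]$, part (ii) for $[y\otimes g,\, z\otimes h]$, and part (iii) for $[z\otimes h,\, x\otimes f]$.

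For part (i), by \eqref{def:eq[u,v]x(ab)} we have $[x\otimes f,\, y\otimes g] = [x,y]\otimes fg$, and by \eqref{eq:[x,y]} we have $[x,y] = 2x+2y$. Hence $[x\otimes f,\, y\otimes g] = (2x+2y)\otimes fg = 2(x\otimes fg) + 2(y\otimes fg)$, which in vector-valued form is exactly $2\begin{pmatrix} fg \\ fg \\ 0\end{pmatrix}$, as claimed. Parts (ii) and (iii) are handled identically, using $[y,z] = 2y+2z$ and $[z,x] = 2z+2x$ from \eqref{eq:[x,y]}: one gets $[y\otimes g,\, z\otimes h] = (2y+2z)\otimes gh$ and $[z\otimes h,\, x\otimes f] = (2z+2x)\otimes hf$, which translate to the stated vector-valued expressions (noting $hf = fh$ since $\mathcal{A}$ is commutative).

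Alternatively — and this is worth remarking — each identity follows from part (i) by applying the automorphism $\prime$ of $\L$ from Lemma \ref{lem:auto L(sl2)+}, which is a Lie algebra homomorphism and therefore commutes with brackets; applying $\prime$ to part (i) with $(f,g)$ replaced by suitable arguments cyclically permutes $(x,y,z)$ and hence cycles the three identities. Since the action of $\prime$ on vector-valued forms sends $\begin{pmatrix} f \\ g \\ h\end{pmatrix}$ to $\begin{pmatrix} h' \\ f' \\ g'\end{pmatrix}$, one can recover (ii) and (iii) from (i) by this symmetry, at the cost of tracking the $\prime$-action on the Laurent-polynomial entries. I expect no genuine obstacle here: the statement is essentially a restatement of the defining bracket of $\L$ in the new notation, and the only thing to be careful about is the bookkeeping of which equitable basis vector sits in which slot of the vector. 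The cleanest exposition is the direct one, so I would present that and relegate the automorphism argument to a one-line remark.
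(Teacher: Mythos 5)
Your proposal is correct and matches the paper's own proof: the paper likewise evaluates $[x\otimes f, y\otimes g]=[x,y]\otimes fg=(2x+2y)\otimes fg$ using \eqref{def:eq[u,v]x(ab)} and \eqref{eq:[x,y]}, then translates back via \eqref{eq:vvf}, with (ii) and (iii) handled the same way. The added remark about deducing (ii), (iii) from (i) via the automorphism $\prime$ is a valid observation but not part of the paper's argument.
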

\begin{proof}
(i): We evaluate $[x\otimes f, y\otimes g]$. 
By using \eqref{eq:[x,y]} and \eqref{def:eq[u,v]x(ab)},
$$
	[x\otimes f, y\otimes g] = [x,y] \otimes fg = (2x+2y)\otimes fg = 2x\otimes fg + 2y\otimes fg.
$$
By applying the identification \eqref{eq:vvf}, we obtain (i).\\
(ii), (iii): Similar to (i).
\end{proof}

\section{The $x_{ij}$-like elements in $\tet$}\label{S:x_ij-like}

Recall the Lie algebra isomorphism $\sigma: \tet \to L(\sl2)^+$ in Lemma \ref{def: sigma} and the standard generator $x_{ij}$ of $\tet$.
In this section, we define the notion of an $x_{ij}$-like element in $\tet$.
As we discussed in Section \ref{Intro}, our goal is to classify the $x_{ij}$-like elements in $\tet$.
To do this, we will calculate $x^\sigma_{ij}$ in $\L$ instead of directly working in $\tet$, as this approach simplifies the computation.

\begin{definition}\label{def:xij-like}
Recall the Lie algebra $\tet$ and its standard generators $x_{ij}$ from Definition \ref{Def:Tet-alg}.
By an \emph{$x_{ij}$-like element in $\tet$}, we mean an element $\xi$ in $\tet$ that satisfies the following conditions:
\begin{align}
	[x_{ij}, \xi] & = 0,\label{x_ij like(1)}  \\
	[x_{hk}, [x_{hk}, [x_{hk}, \xi]]] & = 4[x_{hk}, \xi], \label{x_ij like(2)}
\end{align}
where $h,i,j,k$ are mutually distinct.
\end{definition}

\begin{definition}\label{del:X_ij}
For each standard generator $x_{ij}$ of $\tet$, let $X_{ij}$ denote the subset of $\tet$ consisting of the $x_{ij}$-like elements in $\tet$.
We note that $X_{ij}$ is a subspace of $\tet$.
\end{definition}
We will find the elements of $X_{ij}$.
To do this, we consider the $\sigma$-images of the elements in $X_{ij}$.

\begin{definition}\label{Def:x_ij L(sl2)+}
For an element $\theta \in L(\sl2)^+$, we say $\theta$ is an \emph{$x^\sigma_{ij}$-like element in $\L$} whenever $\theta$ is the $\sigma$-image of an $x_{ij}$-like element in $\tet$.
That is, for an element $\xi \in \tet$, $\xi$ is an $x_{ij}$-like element in $\tet$ if and only if $\xi^\sigma$ is an $x^\sigma_{ij}$-like element in $\L$.
\end{definition}

\begin{lemma}\label{lem:x sigma-like}
For a standard generator $x^\sigma_{ij}$ of $\L$, 
an element $\theta\in \L$ is an $x^\sigma_{ij}$-like element in $\L$ if and only if both
\begin{align}
	[x^\sigma_{ij}, \theta] & = 0,\label{x^s_ij like(1)}  \\
	[x^\sigma_{hk}, [x^\sigma_{hk}, [x^\sigma_{hk}, \theta]]] & = 4[x^\sigma_{hk}, \theta], \label{x^s_ij like(2)}
\end{align}
where $h,i,j,k$ are mutually distinct.
\end{lemma}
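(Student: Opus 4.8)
The statement to prove is essentially a transport-of-structure fact: since $\sigma\colon\tet\to\L$ is a Lie algebra isomorphism, an element $\xi\in\tet$ satisfies the defining relations \eqref{x_ij like(1)}, \eqref{x_ij like(2)} of an $x_{ij}$-like element if and only if $\xi^\sigma$ satisfies the corresponding relations \eqref{x^s_ij like(1)}, \eqref{x^s_ij like(2)} with every standard generator replaced by its $\sigma$-image. The plan is to exploit the fact that $\sigma$ is a Lie algebra homomorphism, so it commutes with forming brackets, and that it is a bijection, so it reflects equalities.

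Concretely, I would argue as follows. Let $\xi\in\tet$ and write $\theta=\xi^\sigma\in\L$. Because $\sigma$ is a homomorphism, $[x_{ij},\xi]^\sigma=[x_{ij}^\sigma,\xi^\sigma]=[x_{ij}^\sigma,\theta]$, and iterating, $[x_{hk},[x_{hk},[x_{hk},\xi]]]^\sigma=[x_{hk}^\sigma,[x_{hk}^\sigma,[x_{hk}^\sigma,\theta]]]$ and $4[x_{hk},\xi]^\sigma=4[x_{hk}^\sigma,\theta]$. Since $\sigma$ is injective, $[x_{ij},\xi]=0$ holds in $\tet$ if and only if $[x_{ij}^\sigma,\theta]=0$ holds in $\L$; likewise the Dolan--Grady relation \eqref{x_ij like(2)} holds in $\tet$ if and only if \eqref{x^s_ij like(2)} holds in $\L$. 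Therefore $\xi$ is an $x_{ij}$-like element of $\tet$ if and only if $\theta=\xi^\sigma$ satisfies \eqref{x^s_ij like(1)} and \eqref{x^s_ij like(2)}.

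Finally I would connect this to Definition \ref{Def:x_ij L(sl2)+}: an element $\theta\in\L$ is $x_{ij}^\sigma$-like by definition precisely when $\theta=\xi^\sigma$ for some $x_{ij}$-like $\xi\in\tet$, and since $\sigma$ is onto, every $\theta\in\L$ has the form $\xi^\sigma$ for a unique $\xi\in\tet$; combining with the previous paragraph, $\theta$ is $x_{ij}^\sigma$-like if and only if $\theta$ itself satisfies \eqref{x^s_ij like(1)} and \eqref{x^s_ij like(2)}. This completes the proof.

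There is really no serious obstacle here; the only thing requiring a small amount of care is keeping the roles of $\sigma$ and $\sigma^{-1}$ straight and making sure both directions of the ``if and only if'' are addressed — in particular, that surjectivity of $\sigma$ is what lets us say \emph{every} $\theta$ satisfying the two relations actually arises as the image of an $x_{ij}$-like element, and injectivity is what lets us pull the relations back and forth. One could also phrase the whole argument a touch more slickly by noting that $\sigma$ conjugates the operator $\operatorname{ad}(x_{hk})$ on $\tet$ to $\operatorname{ad}(x_{hk}^\sigma)$ on $\L$, so the cubic relation $(\operatorname{ad}\,x_{hk})^3=4\,\operatorname{ad}\,x_{hk}$ applied to $\xi$ transports directly.
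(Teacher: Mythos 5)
Your proposal is correct and follows the same route as the paper, which simply declares the lemma immediate from Definitions \ref{def:xij-like} and \ref{Def:x_ij L(sl2)+}; you have merely written out the transport-of-structure argument (using that $\sigma$ is a bijective Lie algebra homomorphism) that the paper leaves implicit.
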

\begin{proof}
Immediate from Definitions \ref{def:xij-like} and \ref{Def:x_ij L(sl2)+}.
\end{proof}

\noindent
Consider the $\sigma$-image of $X_{ij}$.
Observe that $X^\sigma_{ij}$ is the subspace of $L(\sl2)^+$ consisting of the $x^\sigma_{ij}$-like elements in $L(\sl2)^+$.

\begin{lemma}\label{lem:x_ij-elts L(sl2)+}
Recall $\mathcal{A}=\mathbb{F}[t, t^{-1}, (t-1)^{-1}]$.
We have
\begin{align}
	&& X^\sigma_{12} & = x \otimes \mathcal{A}, && X^\sigma_{03} = (y\otimes t + z \otimes (t-1))\mathcal{A}, && \label{eq(1):X12, X03 sigma}\\
	&& X^\sigma_{23} & = y \otimes \mathcal{A}, && X^\sigma_{01} = (z\otimes t' + x \otimes (t'-1))\mathcal{A}, && \label{eq(2):X23, X01 sigma}\\
	&& X^\sigma_{31} & = z \otimes \mathcal{A}, && X^\sigma_{02} = (x\otimes t'' + y \otimes (t''-1))\mathcal{A}. &&\label{eq(3):X31, X02 sigma}
\end{align}
\end{lemma}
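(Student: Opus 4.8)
The plan is to compute the image of each $x_{ij}$-like condition under $\sigma$ and solve the resulting linear problem in the explicit model $\L = \sl2 \otimes \mathcal{A}$, using the vector-valued form from Section~\ref{sec:vvf:L(sl2)+}. By Lemma~\ref{lem:x sigma-like}, an element $\theta \in \L$ is $x^\sigma_{ij}$-like if and only if it satisfies \eqref{x^s_ij like(1)} and \eqref{x^s_ij like(2)} with the appropriate standard generators of $\L$. By the $S_4$-symmetry (more precisely, the automorphism $\prime$ of $\L$ from Lemma~\ref{lem:auto L(sl2)+}, which intertwines $\sigma$ with the action of $(123)$ on $\tet$) together with the relation $x_{ij}+x_{ji}=0$, it suffices to establish one of the six identities; I would carry out the case $X^\sigma_{12} = x \otimes \mathcal{A}$ in detail and then deduce the rest by applying $\prime$ and the known values of $t', t''$ from \eqref{auto_A} and of the relevant standard generators from Lemma~\ref{def: sigma}.

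For the case $X^\sigma_{12}$: write $\theta = x\otimes f + y\otimes g + z\otimes h$ with $f,g,h \in \mathcal{A}$. Since $x^\sigma_{12} = x\otimes 1$, condition \eqref{x^s_ij like(1)} reads $[x\otimes 1, \theta] = 0$. Using \eqref{def:eq[u,v]x(ab)} and \eqref{eq:[x,y]} (or, more conveniently, Lemma~\ref{v-val comp}), one finds $[x\otimes 1, y\otimes g + z\otimes h] = 2x\otimes g + 2y\otimes g + 2x\otimes h + 2z\otimes h$; collecting components and using that $x\otimes1, y\otimes1, z\otimes1$ and their $\mathcal{A}$-multiples are linearly independent (Lemma~\ref{lem:basis for L(sl2)+}), this vanishes iff $g = 0$ and $h = 0$. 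Hence \eqref{x^s_ij like(1)} alone already forces $\theta \in x\otimes\mathcal{A}$, giving the inclusion $X^\sigma_{12} \subseteq x\otimes\mathcal{A}$. For the reverse inclusion, I must check that every $x\otimes f$ also satisfies the Dolan--Grady condition \eqref{x^s_ij like(2)} with $x^\sigma_{03} = y\otimes t + z\otimes(t-1)$. Write $D = \operatorname{ad}(x^\sigma_{03})$; since $\operatorname{ad}$ acts on $\L = \sl2\otimes\mathcal{A}$ compatibly with the $\mathcal{A}$-module structure (the bracket is $\mathcal{A}$-bilinear in the Laurent-polynomial factor), it suffices to verify $D^3(x\otimes f) = 4 D(x\otimes f)$ for $f=1$ and then multiply through by $f$; equivalently, one checks the identity inside the Onsager subalgebra generated by $x_{12}$ and $x_{03}$, where $x_{12}, x_{03}$ satisfy the Dolan--Grady relations by definition of $\tet$. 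In fact $D^3 = 4D$ holds as an operator identity on all of $\L$ because $x^\sigma_{03}$ is the $\sigma$-image of a standard generator and \eqref{Dolan-Grady rels} holds between any two standard generators; applying this to $x\otimes f$ (using that $\sigma$ is an isomorphism and $x_{12}$ is $x_{03}$-adjoint-nilpotent modulo the cubic relation) closes the argument. This gives $X^\sigma_{12} = x\otimes\mathcal{A}$, and the companion formula $X^\sigma_{03} = (y\otimes t + z\otimes(t-1))\mathcal{A}$ follows by the symmetric computation: $[x^\sigma_{03}, (y\otimes t + z\otimes(t-1))a] = 0$ by $\mathcal{A}$-bilinearity and $[u,u]=0$, and the Dolan--Grady condition with $x^\sigma_{12} = x\otimes 1$ is handled as before.

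The remaining four identities in \eqref{eq(2):X23, X01 sigma}--\eqref{eq(3):X31, X02 sigma} follow by applying the order-three automorphism $\prime$ of $\L$: it sends $x\otimes\mathcal{A} \mapsto y\otimes\mathcal{A} \mapsto z\otimes\mathcal{A}$ by \eqref{auto_sl2}, sends the pair $(x_{12}, x_{03})$ to $(x_{23}, x_{01})$ to $(x_{31}, x_{02})$ under $\sigma$ (Lemma~\ref{lem:auto L(sl2)+} together with the $(123)$-action recorded in Section~\ref{Intro}), and sends $t\mapsto t'\mapsto t''$ by \eqref{auto_A}; since being $x^\sigma_{ij}$-like is preserved under an automorphism that permutes the relevant standard generators accordingly, the formulas transport verbatim. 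I expect the main obstacle to be the reverse inclusion in the first case, i.e.\ confirming that $x\otimes f$ satisfies the cubic Dolan--Grady relation with $x^\sigma_{03}$ for every $f\in\mathcal{A}$ rather than merely for constants; the clean way around this is precisely the $\mathcal{A}$-module observation, namely that $\operatorname{ad}(x^\sigma_{03})$ commutes with right multiplication by elements of $\mathcal{A}$ on $\L$, so the cubic identity need only be checked on a single generator of the cyclic $\mathcal{A}$-module $x\otimes\mathcal{A}$, and there it is immediate from \eqref{Dolan-Grady rels} for $\tet$ transported through $\sigma$. Everything else is a short, mechanical computation with Lemma~\ref{v-val comp} and the linear independence statement of Lemma~\ref{lem:basis for L(sl2)+}.
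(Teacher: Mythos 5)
There is a genuine gap, and it sits exactly where you predicted the difficulty would \emph{not} be. First, your symmetry reduction is overstated: the only automorphism of $\L$ available from the paper is $\prime$, corresponding to $(123)\in S_4$, and its orbits on the six relevant generators are $\{x_{12},x_{23},x_{31}\}$ and $\{x_{03},x_{01},x_{02}\}$; the relation $x_{ij}+x_{ji}=0$ only identifies $X_{ij}$ with $X_{ji}$ and never moves you between the two orbits. So it does \emph{not} suffice to establish one of the six identities: you need two base cases, $X^\sigma_{12}=x\otimes\mathcal{A}$ and $X^\sigma_{03}=(y\otimes t+z\otimes(t-1))\mathcal{A}$, which is precisely how the paper proceeds. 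You do eventually turn to $X^\sigma_{03}$, but there you only verify the easy inclusion $(y\otimes t+z\otimes(t-1))\mathcal{A}\subseteq X^\sigma_{03}$ (commutation by $\mathcal{A}$-linearity and $[u,u]=0$, Dolan--Grady with $x^\sigma_{12}$ as before). The reverse inclusion is the substantive half and is \emph{not} symmetric to the $X_{12}$ case: writing $\omega=x\otimes f+y\otimes g+z\otimes h$ with $[x^\sigma_{03},\omega]=0$, the commutation condition does not kill coordinates outright but yields $f=0$ together with the relation $th=(t-1)g$, and one must then invoke the invertibility of $t$ in $\mathcal{A}=\mathbb{F}[t,t^{-1},(t-1)^{-1}]$ to write $\omega=(y\otimes t+z\otimes(t-1))\,gt^{-1}$. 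This is where the specific localization defining $\mathcal{A}$ enters (over $\mathbb{F}[t]$ the claimed equality would fail), and your proposal never performs this step; ``follows by the symmetric computation'' hides the only part of the lemma that is not formal.

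Two smaller points. Your fallback claim that $D^3=4D$ holds as an operator identity on all of $\L$ ``because the Dolan--Grady relation holds between any two standard generators'' is not a valid justification: the Dolan--Grady relation is an identity between two specific elements, not a statement that $\operatorname{ad}(x^\sigma_{03})^3=4\operatorname{ad}(x^\sigma_{03})$ on the whole algebra. (The operator identity happens to be true --- one can see it by computing the matrix of $\operatorname{ad}(x^\sigma_{03})$ as an $\mathcal{A}$-linear map on $\sl2\otimes\mathcal{A}$ and applying Cayley--Hamilton --- but that requires a separate argument.) Fortunately you do not need it: your earlier reduction, that $\operatorname{ad}(x^\sigma_{03})$ commutes with right multiplication by $\mathcal{A}$ so the cubic relation need only be checked on $x\otimes 1$, where it is the $\sigma$-image of a defining relation of $\tet$, is correct and suffices; this is essentially the content of the paper's ``routine check.'' Finally, your displayed bracket $[x\otimes 1, y\otimes g+z\otimes h]$ has a sign slip (it equals $2x\otimes(g-h)+2y\otimes g-2z\otimes h$), though the conclusion $g=h=0$ is unaffected.
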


\begin{proof}
First we show that $X^\sigma_{12} = x \otimes \mathcal{A}$.
Recall the standard generators $x^\sigma_{12} = x\otimes1$ and $x^\sigma_{03} = y\otimes t + z \otimes (t-1)$.
For any $x\otimes f \in x\otimes \mathcal{A}$, we routinely check that both 
\begin{align*}
	[x^\sigma_{12}, x\otimes f]  & = 0,\\
	[x^\sigma_{03}, [x^\sigma_{03}, [x^\sigma_{03}, x\otimes f ]]] & = 4[x^\sigma_{03}, x\otimes f ].
\end{align*}
By Lemma \ref{lem:x sigma-like}, the element $x\otimes f$ is an $x^\sigma_{12}$-like element in $\L$.
Therefore, $x\otimes f \in X_{12}^\sigma$.
Conversely, let $\theta \in X_{12}^\sigma$.
Write $\theta = x\otimes f + y\otimes g + z\otimes h$.
By \eqref{x^s_ij like(1)} at $(i,j)=(1,2)$, we have
\begin{align*}
	0 = [x_{12}^\sigma, \theta] & = [x\otimes 1,  x\otimes f + y\otimes g + z\otimes h] \\
	& = 2x\otimes(g-h) + 2y\otimes g - 2z\otimes h.
\end{align*}
From this equation, we find $g=0$ and $h=0$, implying $\theta=x\otimes f$.
Therefore, $\theta \in x\otimes \mathcal{A}$. 
The desired result follows.
To obtain the left equations in \eqref{eq(2):X23, X01 sigma} and \eqref{eq(3):X31, X02 sigma}, apply the automorphism $\prime$ of Lemma \ref{lem:auto L(sl2)+} to $X^\sigma_{12}=x\otimes \mathcal{A}$.

Next, we show $X^\sigma_{03} = (y\otimes t + z \otimes (t-1))\mathcal{A}$.
Consider an element $y\otimes tf +z\otimes (t-1)f$ for $f\in \mathcal{A}$.
Then we routinely check that both
\begin{align*}
	[x^\sigma_{03}, y\otimes tf +z\otimes (t-1)f]  & = 0,\\
	[x^\sigma_{12}, [x^\sigma_{12}, [x^\sigma_{12}, y\otimes tf +z\otimes (t-1)f ]]]  &= 4[x^\sigma_{12}, y\otimes tf +z\otimes (t-1)f ].
\end{align*}
By Lemma \ref{lem:x sigma-like}, $y\otimes tf +z\otimes (t-1)f$ is an $x_{03}^\sigma$-like element in $\L$.
Therefore, $y\otimes tf +z\otimes (t-1)f \in X_{03}^\sigma$.
Conversely, let $\omega \in X_{03}^\sigma$.
Write $\omega = x\otimes f + y\otimes g + z\otimes h$.
By \eqref{x^s_ij like(1)} at $(i,j)=(0,3)$, we have
\begin{align*}
	0 = [x_{03}^\sigma, \omega] 
	& = [y\otimes t + z \otimes (t-1),  x\otimes f + y\otimes g + z\otimes h] \\
	& = -2 x\otimes f + 2y\otimes \big(2tf + 2(1-t)g + 2th \big)
	 + 2z\otimes \big(2(t-1)f + 2(1-t)g + 2th \big).
\end{align*}
From this equation, we find $f=0$ and $th=(t-1)g$.
Using these equations, we have
\begin{equation*}
	\omega = y\otimes g + z\otimes h = (y\otimes tg + z\otimes th) t^{-1}  = (y\otimes t + z\otimes (t-1)) gt^{-1}.
\end{equation*}
Since $gt^{-1}\in \mathcal{A}$, it follows $\omega \in (y\otimes t + z \otimes (t-1))\mathcal{A}$.
The desired result follows.
To obtain the right equations in \eqref{eq(2):X23, X01 sigma} and \eqref{eq(3):X31, X02 sigma}, apply the automorphism $\prime$ of Lemma \ref{lem:auto L(sl2)+} to $X^\sigma_{03} = (y\otimes t + z \otimes (t-1))\mathcal{A}$.
\end{proof}

\begin{corollary}
For each standard generator $x_{ij}^\sigma$ of $\L$, we have
\begin{equation*}
	X^\sigma_{ij} = x_{ij}^\sigma \mathcal{A}.
\end{equation*}
\end{corollary}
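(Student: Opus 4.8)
The plan is to read the result straight off Lemma~\ref{lem:x_ij-elts L(sl2)+} together with the formulas for $\sigma$ in Lemma~\ref{def: sigma}, after a short bookkeeping step. First I would observe that it suffices to treat the six standard generators $x_{12}, x_{03}, x_{23}, x_{01}, x_{31}, x_{02}$. Indeed, the relation $x_{ij}+x_{ji}=0$ from \eqref{def:tet rel(1)} forces $x_{ji}^\sigma = -x_{ij}^\sigma$, so $x_{ji}^\sigma\mathcal{A} = x_{ij}^\sigma\mathcal{A}$ as subspaces of $\L$; moreover the defining conditions \eqref{x_ij like(1)}, \eqref{x_ij like(2)} for an $x_{ij}$-like element are unchanged when $x_{ij}$ is replaced by $x_{ji}$ (and the complementary generator $x_{hk}$ by $x_{kh}$), since each replacement only introduces overall signs that cancel on both sides of the two identities. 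Hence $X_{ij}=X_{ji}$, so $X_{ij}^\sigma = X_{ji}^\sigma$, and the twelve instances of the claim collapse to six, one for each pair $\{ij,ji\}$.

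Next I would verify these six instances by inspection, using the right $\mathcal{A}$-module structure on $\L$ recorded after Definition~\ref{Def:3pt sl2-loop alg}, under which $(u\otimes 1)\,a = u\otimes a$. For the three ``left-hand'' generators, Lemma~\ref{lem:x_ij-elts L(sl2)+} gives $X_{12}^\sigma = x\otimes\mathcal{A}$, $X_{23}^\sigma = y\otimes\mathcal{A}$, $X_{31}^\sigma = z\otimes\mathcal{A}$, while Lemma~\ref{def: sigma} gives $x_{12}^\sigma = x\otimes 1$, $x_{23}^\sigma = y\otimes 1$, $x_{31}^\sigma = z\otimes 1$; hence $x_{12}^\sigma\mathcal{A} = x\otimes\mathcal{A} = X_{12}^\sigma$, and likewise for $23$ and $31$. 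For the three ``right-hand'' generators, Lemma~\ref{lem:x_ij-elts L(sl2)+} gives $X_{03}^\sigma = (y\otimes t + z\otimes(t-1))\mathcal{A}$, and Lemma~\ref{def: sigma} gives precisely $x_{03}^\sigma = y\otimes t + z\otimes(t-1)$, so $X_{03}^\sigma = x_{03}^\sigma\mathcal{A}$; the cases $01$ and $02$ are identical word for word, with $(z,x,t')$ and $(x,y,t'')$ in place of $(y,z,t)$.

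I do not expect a genuine obstacle: all the substantive work, namely the explicit identification of $X_{ij}^\sigma$, was already done in Lemma~\ref{lem:x_ij-elts L(sl2)+}, and the corollary merely repackages those six descriptions uniformly as ``$X_{ij}^\sigma$ is the cyclic right $\mathcal{A}$-submodule of $\L$ generated by $x_{ij}^\sigma$''. The one point that deserves a sentence of care is the reduction in the first paragraph: confirming that the six computations of Lemma~\ref{lem:x_ij-elts L(sl2)+} really do exhaust all twelve standard generators once $x_{ij}+x_{ji}=0$ is taken into account. If one prefers to avoid even that and argue purely by orbits, one can instead apply the order-three automorphism $\prime$ of $\L$ from Lemma~\ref{lem:auto L(sl2)+} — which intertwines $\sigma$ with the $(123)$-action on $\tet$ — to propagate the two base cases $X_{12}^\sigma = x_{12}^\sigma\mathcal{A}$ and $X_{03}^\sigma = x_{03}^\sigma\mathcal{A}$ around their orbits, exactly as the proof of Lemma~\ref{lem:x_ij-elts L(sl2)+} does; this uses that $\prime$ is $\mathcal{A}$-semilinear, so it carries cyclic $\mathcal{A}$-submodules to cyclic $\mathcal{A}$-submodules.
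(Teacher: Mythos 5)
Your proposal is correct and follows essentially the same route as the paper, whose proof is a one-line appeal to the right $\mathcal{A}$-module structure of $\L$ together with Lemmas \ref{def: sigma} and \ref{lem:x_ij-elts L(sl2)+}. Your extra bookkeeping step—reducing the twelve generators to the six covered by Lemma \ref{lem:x_ij-elts L(sl2)+} via $x_{ji}^\sigma=-x_{ij}^\sigma$ and $X_{ij}=X_{ji}$—is a valid refinement of a point the paper leaves implicit.
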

\begin{proof}
Since $\L$ is a right $\mathcal{A}$-module and by Lemmas \ref{def: sigma} and \ref{lem:x_ij-elts L(sl2)+}, the result follows.
\end{proof}

Next, we discuss a basis for each subspace $X_{12}^\sigma$, $X_{23}^\sigma$, and $X_{31}^\sigma$ of $\L$.
\begin{lemma}\label{lem:L_ij like} 
The following {\rm(i)}--{\rm(iii)} hold.
\begin{enumerate}[label=(\roman*), font=\normalfont]
	\item The subspace $X^\sigma_{12}$ has a basis
	\begin{equation}\label{basis X12}
		\{ x\otimes1 \} \cup \{ x\otimes t^i, x\otimes (t')^i, x\otimes (t'')^i \mid i \in \mathbb{N}\}.
	\end{equation}
	\item The subspace $X^\sigma_{23}$ has a basis
	\begin{equation}\label{basis X23}
		\{ y\otimes1 \} \cup \{ y\otimes t^i, y\otimes (t')^i, y\otimes (t'')^i \mid i \in \mathbb{N}\}.
	\end{equation}
	\item	The subspace $X^\sigma_{31}$ has a basis
	\begin{equation}\label{basis X31}
		\{ z\otimes 1 \} \cup \{ z\otimes t^i, z\otimes (t')^i, z\otimes (t'')^i \mid i \in \mathbb{N}\}.
	\end{equation}
\end{enumerate}
Moreover, the $\mathbb{F}$-vector space $L(\sl2)^+$ satisfies
\begin{equation}\label{eq:L=ds(Xij)}
	\L = X^\sigma_{12} + X^\sigma_{23} + X^\sigma_{31} \qquad (\text{\rm direct sum}).
\end{equation} 
\end{lemma}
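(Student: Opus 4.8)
The plan is to deduce everything from Lemma \ref{lem:x_ij-elts L(sl2)+} together with the basis and direct-sum decomposition of $\mathcal{A}$ in Lemma \ref{lem:basis for A}. For part (i), I would start from $X^\sigma_{12} = x \otimes \mathcal{A}$, which is just the image of $\mathcal{A}$ under the injective linear map $a \mapsto x \otimes a$. Since this map is an $\mathbb{F}$-linear isomorphism onto $X^\sigma_{12}$, it carries any $\mathbb{F}$-basis of $\mathcal{A}$ to an $\mathbb{F}$-basis of $X^\sigma_{12}$; applying it to the basis $\{1\} \cup \{t^i, (t')^i, (t'')^i \mid i \in \mathbb{N}\}$ from Lemma \ref{lem:basis for A}(i) yields exactly \eqref{basis X12}. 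Parts (ii) and (iii) follow identically, using $X^\sigma_{23} = y \otimes \mathcal{A}$ and $X^\sigma_{31} = z \otimes \mathcal{A}$ from Lemma \ref{lem:x_ij-elts L(sl2)+}; alternatively, one can obtain (ii) and (iii) from (i) by applying the order-three automorphism $\prime$ of Lemma \ref{lem:auto L(sl2)+}, which permutes $x \otimes \mathcal{A}$, $y \otimes \mathcal{A}$, $z \otimes \mathcal{A}$ cyclically (up to reindexing $t, t', t''$ via \eqref{auto_A}).

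For the direct-sum statement \eqref{eq:L=ds(Xij)}, I would combine the three bases from (i)--(iii) and check that their union is a basis for $\L$. The cleanest route is to compare with Lemma \ref{lem:basis for L(sl2)+}(i): the union of \eqref{basis X12}, \eqref{basis X23}, \eqref{basis X31} is precisely
\begin{equation*}
	\{x \otimes 1, y \otimes 1, z \otimes 1\} \cup \{u \otimes t^i, u \otimes (t')^i, u \otimes (t'')^i \mid u \in \{x,y,z\},\ i \in \mathbb{N}\},
\end{equation*}
which is exactly the stated basis for $L(\sl2)^+$ in Lemma \ref{lem:basis for L(sl2)+}(i). Hence $\L = X^\sigma_{12} + X^\sigma_{23} + X^\sigma_{31}$ and the sum is direct, because a sum of subspaces whose combined spanning sets form a basis for the ambient space (and each of which is a basis for the respective summand) is automatically direct: any vector in the sum decomposes uniquely along that basis, and the three pieces land in the three distinct coordinate blocks.

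Alternatively, and perhaps more transparently, the direct-sum claim is just the statement $\L = (x \otimes \mathcal{A}) \oplus (y \otimes \mathcal{A}) \oplus (z \otimes \mathcal{A})$, which is immediate from $\L = \sl2 \otimes \mathcal{A}$ and the fact that $\{x, y, z\}$ is a basis for $\sl2$ (the equitable basis): tensoring the decomposition $\sl2 = \mathbb{F}x \oplus \mathbb{F}y \oplus \mathbb{F}z$ with $\mathcal{A}$ gives the result. I would mention this as the conceptual reason and use the basis comparison as the formal verification.

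I do not anticipate a genuine obstacle here — the statement is essentially bookkeeping once Lemma \ref{lem:x_ij-elts L(sl2)+} is in hand. The only point requiring a little care is making sure the indexing in the union of \eqref{basis X12}--\eqref{basis X31} matches the indexing in Lemma \ref{lem:basis for L(sl2)+}(i) verbatim (same element $1$ versus $t^0 = (t')^0 = (t'')^0$, and the ranges $i \in \mathbb{N}$ versus $i \in \mathbb{N}_0$), so that no basis vector is double-counted and none is omitted; this is the one spot where a careless reader might worry about an overlap between the $X^\sigma_{ij}$. That overlap is ruled out precisely because the $x$-, $y$-, $z$-components are independent coordinates in $\sl2 \otimes \mathcal{A}$.
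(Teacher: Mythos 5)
Your proposal is correct and follows essentially the same route as the paper, which proves the lemma by combining the identifications $X^\sigma_{12}=x\otimes\mathcal{A}$, $X^\sigma_{23}=y\otimes\mathcal{A}$, $X^\sigma_{31}=z\otimes\mathcal{A}$ from Lemma \ref{lem:x_ij-elts L(sl2)+} with the basis of $L(\sl2)^+$ in Lemma \ref{lem:basis for L(sl2)+}(i); your use of the basis of $\mathcal{A}$ from Lemma \ref{lem:basis for A}(i) is just the same bookkeeping made explicit. No gaps.
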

\begin{proof}
By Lemma \ref{lem:basis for L(sl2)+}(i) and Lemma \ref{lem:x_ij-elts L(sl2)+}.
\end{proof}

\begin{corollary}
We have
\begin{alignat}{5}
	X^\sigma_{12} & \ = \ && x \otimes \mathbb{F}[t] & \ + \ & x \otimes (1-t')\mathbb{F}[t'] & \ + \ & x \otimes t''\mathbb{F}[t''], \label{eq:X12-DS}\\
	X^\sigma_{23} & \ = \ && y \otimes \mathbb{F}[t'] & \ + \ & y \otimes (1-t'')\mathbb{F}[t''] & \ + \ & y \otimes t\mathbb{F}[t], \label{eq:X23-DS}\\
	X^\sigma_{31} & \ = \ && z \otimes \mathbb{F}[t''] & \ + \ & z \otimes (1-t)\mathbb{F}[t] & \ + \ & z \otimes t'\mathbb{F}[t']. \label{eq:X31-DS}
\end{alignat}
Each of the sums \eqref{eq:X12-DS}--\eqref{eq:X31-DS} is direct.
\end{corollary}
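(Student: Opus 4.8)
The plan is to reduce the statement to the direct sum decompositions of $\mathcal{A}$ recorded in Lemma \ref{lem:basis for A}(ii). By Lemma \ref{lem:x_ij-elts L(sl2)+} we have $X^\sigma_{12} = x \otimes \mathcal{A}$, $X^\sigma_{23} = y \otimes \mathcal{A}$, and $X^\sigma_{31} = z \otimes \mathcal{A}$. For a fixed $u \in \{x, y, z\}$, the map $\mathcal{A} \to u \otimes \mathcal{A}$, $a \mapsto u \otimes a$, is an isomorphism of $\mathbb{F}$-vector spaces: it is $\mathbb{F}$-linear and onto by definition, and it is injective because $\{x, y, z\}$ is a basis of $\sl2$, so $u \otimes a = 0$ forces $a = 0$. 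Hence applying this isomorphism to any direct sum decomposition of $\mathcal{A}$ produces a direct sum decomposition of $u \otimes \mathcal{A}$, sending each summand $S$ of $\mathcal{A}$ to $u \otimes S$.

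First I would handle $X^\sigma_{12}$: take $u = x$ and apply the above isomorphism to \eqref{ds:A(1)}, namely $\mathcal{A} = \mathbb{F}[t] + (1-t')\mathbb{F}[t'] + t''\mathbb{F}[t'']$. Since the isomorphism carries $\mathbb{F}[t]$, $(1-t')\mathbb{F}[t']$, $t''\mathbb{F}[t'']$ onto $x \otimes \mathbb{F}[t]$, $x \otimes (1-t')\mathbb{F}[t']$, $x \otimes t''\mathbb{F}[t'']$ respectively, this yields \eqref{eq:X12-DS} together with the assertion that the sum is direct. Next, for $X^\sigma_{23}$, take $u = y$ and apply the isomorphism to \eqref{ds:A(2)}, $\mathcal{A} = \mathbb{F}[t'] + (1-t'')\mathbb{F}[t''] + t\mathbb{F}[t]$, obtaining \eqref{eq:X23-DS}. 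Finally, for $X^\sigma_{31}$, take $u = z$ and apply it to \eqref{ds:A(3)}, $\mathcal{A} = \mathbb{F}[t''] + (1-t)\mathbb{F}[t] + t'\mathbb{F}[t']$, obtaining \eqref{eq:X31-DS}. As a consistency check, adding the three decompositions over $u \in \{x, y, z\}$ recovers the direct sum \eqref{ds:L(sl2)+} of Lemma \ref{lem:basis for L(sl2)+}(ii), which it must since $\L = X^\sigma_{12} + X^\sigma_{23} + X^\sigma_{31}$ is direct by \eqref{eq:L=ds(Xij)}.

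There is essentially no obstacle in this argument; the corollary is a formal consequence of Lemmas \ref{lem:basis for A} and \ref{lem:x_ij-elts L(sl2)+}. The one point that deserves a moment's care is the bookkeeping: pairing each component $x$, $y$, $z$ with the correctly rotated decomposition of $\mathcal{A}$, that is \eqref{ds:A(1)}, \eqref{ds:A(2)}, \eqref{ds:A(3)} respectively, a pairing that is dictated by the shape of \eqref{ds:L(sl2)+}. Alternatively, one could skip $\mathcal{A}$ altogether and read the three decompositions straight off \eqref{ds:L(sl2)+}, grouping its nine summands into the three rows, each row being a decomposition of one of $X^\sigma_{12}$, $X^\sigma_{23}$, $X^\sigma_{31}$; but I would present the proof via Lemma \ref{lem:basis for A} as the cleaner route.
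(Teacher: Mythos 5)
Your proof is correct and is essentially the paper's argument in slightly different clothing: the paper identifies the right-hand side of \eqref{eq:X12-DS} with $X^\sigma_{12}$ by matching the basis \eqref{basis X12}, while you transport the direct-sum decomposition \eqref{ds:A(1)} of $\mathcal{A}$ through the isomorphism $a \mapsto x \otimes a$ onto $X^\sigma_{12} = x \otimes \mathcal{A}$ from Lemma \ref{lem:x_ij-elts L(sl2)+}; the substance is the same. No gaps.
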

\begin{proof}
The sum on the right-hand side of \eqref{eq:X12-DS} has a basis \eqref{basis X12}.
Therefore, we have \eqref{eq:X12-DS}.
Similarly, we have \eqref{eq:X23-DS} and \eqref{eq:X31-DS}.
\end{proof}

\begin{corollary}
We have 
\begin{equation}\label{cor:tet=ds(Xij)}
	\tet = X_{12} + X_{23} + X_{31} \qquad (\text{\rm direct sum}).
\end{equation}
\end{corollary}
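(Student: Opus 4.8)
The final statement to prove is the corollary asserting that $\tet = X_{12} + X_{23} + X_{31}$ is a direct sum. The plan is to transport the decomposition \eqref{eq:L=ds(Xij)} from $\L$ back to $\tet$ via the isomorphism $\sigma$. Since $\sigma: \tet \to \L$ is a Lie algebra isomorphism, in particular an $\mathbb{F}$-vector space isomorphism, it respects sums and directness of sums of subspaces. The key input is Lemma \ref{lem:L_ij like}, which already establishes that $\L = X^\sigma_{12} + X^\sigma_{23} + X^\sigma_{31}$ is a direct sum.

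First I would recall that, by Definition \ref{Def:x_ij L(sl2)+} together with Definition \ref{del:X_ij}, the subspace $X^\sigma_{ij}$ of $\L$ is precisely the image $\sigma(X_{ij})$ of the subspace $X_{ij}$ of $\tet$; this is the content of the sentence immediately following Lemma \ref{lem:x sigma-like}. Consequently, applying the inverse isomorphism $\sigma^{-1}$ to the identity $\L = X^\sigma_{12} + X^\sigma_{23} + X^\sigma_{31}$ yields $\tet = X_{12} + X_{23} + X_{31}$. For directness, suppose $\xi_{12} + \xi_{23} + \xi_{31} = 0$ with $\xi_{ij} \in X_{ij}$; applying $\sigma$ gives $\sigma(\xi_{12}) + \sigma(\xi_{23}) + \sigma(\xi_{31}) = 0$ with $\sigma(\xi_{ij}) \in X^\sigma_{ij}$, and directness of the sum in $\L$ forces each $\sigma(\xi_{ij}) = 0$, hence each $\xi_{ij} = 0$ since $\sigma$ is injective. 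Alternatively, one can invoke the general principle that a vector space isomorphism carries a direct sum decomposition to a direct sum decomposition, which makes the argument a one-line citation of Lemma \ref{lem:L_ij like} and Lemma \ref{def: sigma}.

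There is essentially no obstacle here: the corollary is a formal consequence of the already-proved decomposition of $\L$ and the fact that $\sigma$ is an isomorphism intertwining the $X_{ij}$ with the $X^\sigma_{ij}$. The only point requiring a moment of care is to note explicitly that $\sigma(X_{ij}) = X^\sigma_{ij}$, which is true by the very definition of $X^\sigma_{ij}$ and the bijectivity of $\sigma$ — so the proof is just: apply $\sigma^{-1}$ to \eqref{eq:L=ds(Xij)}, using that $\sigma$ is an $\mathbb{F}$-linear bijection sending $X_{ij}$ onto $X^\sigma_{ij}$.

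\begin{proof}
Recall from Lemma \ref{def: sigma} that $\sigma: \tet \to \L$ is a Lie algebra isomorphism, hence in particular an $\mathbb{F}$-vector space isomorphism. By Definitions \ref{del:X_ij} and \ref{Def:x_ij L(sl2)+}, the subspace $X^\sigma_{ij}$ of $\L$ is the image $\sigma(X_{ij})$ of the subspace $X_{ij}$ of $\tet$. By Lemma \ref{lem:L_ij like}, we have the direct sum $\L = X^\sigma_{12} + X^\sigma_{23} + X^\sigma_{31}$. Applying the linear bijection $\sigma^{-1}$ to this identity and using $\sigma^{-1}(X^\sigma_{ij}) = X_{ij}$, we obtain $\tet = X_{12} + X_{23} + X_{31}$. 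To see that this sum is direct, suppose $\xi_{12} + \xi_{23} + \xi_{31} = 0$ with $\xi_{ij} \in X_{ij}$. Applying $\sigma$ gives $\sigma(\xi_{12}) + \sigma(\xi_{23}) + \sigma(\xi_{31}) = 0$ with $\sigma(\xi_{ij}) \in X^\sigma_{ij}$; since the sum in $\L$ is direct, each $\sigma(\xi_{ij}) = 0$, and since $\sigma$ is injective, each $\xi_{ij} = 0$. Thus the sum \eqref{cor:tet=ds(Xij)} is direct.
\end{proof}
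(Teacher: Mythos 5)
Your proof is correct and follows exactly the paper's route: the paper's proof is simply the citation ``By \eqref{eq:L=ds(Xij)}'', i.e.\ pull back the direct sum $\L = X^\sigma_{12} + X^\sigma_{23} + X^\sigma_{31}$ through the isomorphism $\sigma$, which is what you spell out. Your version just makes explicit the identification $\sigma(X_{ij}) = X^\sigma_{ij}$ and the directness argument that the paper leaves implicit.
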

\begin{proof}
By \eqref{eq:L=ds(Xij)}.
\end{proof}


\section{Subspaces $\{U_i\}_{i\in \mathbb{N}_0}$ of $\mathcal{O}$}\label{Sec: Subsp Ui}

For the rest of this paper, we denote by $\mathcal{O}$ the Lie subalgebra of $\tet$ generated by $x_{12}$ and $x_{03}$.
Note that $\mathcal{O}$ is the Onsager subalgebra with the standard generators $x_{12}$ and $x_{03}$.
In this section, we introduce the subspaces $\{U_i\}_{i \in \mathbb{N}_0}$ of $\mathcal{O}$.
We will show that $\mathcal{O}$ is decomposed into a direct sum of $\{U_i\}_{i \in \mathbb{N}_0}$.
First, we define a sequence of subspaces $\{\mathcal{U}_i\}_{i\in \mathbb{N}_0}$ of $\mathcal{O}$ as follows:
\begin{equation*}
	\mathcal{U}_0 = \operatorname{Span}\{x_{12}, x_{03}\}, 
	\qquad \qquad 
	\mathcal{U}_{i+1} = \mathcal{U}_{i} + [\mathcal{U}_{0}, \mathcal{U}_{i}] \qquad (i \in \mathbb{N}_0).
\end{equation*}
From the definition, the sequence $\{\mathcal{U}_i\}_{i\in \mathbb{N}_0}$ forms a nested sequence of subspaces.
\begin{lemma}\label{lem:O=unionU}
We have
\begin{equation}\label{eq:O=uni U}
	\mathcal{O} = \bigcup_{i\in \mathbb{N}_0}\mathcal{U}_i.
\end{equation}
\end{lemma}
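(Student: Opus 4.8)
The plan is to prove the two inclusions $\mathcal{O} \subseteq \bigcup_{i} \mathcal{U}_i$ and $\bigcup_{i} \mathcal{U}_i \subseteq \mathcal{O}$ separately. The second inclusion is immediate: each $\mathcal{U}_i$ is built up from $\mathcal{U}_0 = \operatorname{Span}\{x_{12}, x_{03}\} \subseteq \mathcal{O}$ by repeatedly adjoining brackets $[\mathcal{U}_0, \mathcal{U}_i]$, and since $\mathcal{O}$ is a Lie subalgebra containing $x_{12}$ and $x_{03}$, an easy induction on $i$ shows $\mathcal{U}_i \subseteq \mathcal{O}$ for all $i \in \mathbb{N}_0$; taking the union preserves the inclusion.

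For the first inclusion, let $W = \bigcup_{i \in \mathbb{N}_0} \mathcal{U}_i$. Since $\{\mathcal{U}_i\}$ is a nested sequence of subspaces, $W$ is itself a subspace of $\mathcal{O}$. It contains $x_{12}$ and $x_{03}$ (they lie in $\mathcal{U}_0$), so it suffices to show $W$ is closed under the Lie bracket; then $W$ is a Lie subalgebra containing the generators of $\mathcal{O}$, hence $W \supseteq \mathcal{O}$, and combined with $W \subseteq \mathcal{O}$ we get equality. To check closure, take $u, v \in W$. By nestedness there is an index $i$ with $u, v \in \mathcal{U}_i$. The issue is that $[u,v]$ need not lie in $[\mathcal{U}_0, \mathcal{U}_i]$ unless one of the arguments is in $\mathcal{U}_0$.

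The key step, and the main obstacle, is therefore to prove the auxiliary claim $[\mathcal{U}_i, \mathcal{U}_j] \subseteq \mathcal{U}_{i+j}$ (or at least $[\mathcal{U}_i, \mathcal{U}_j] \subseteq W$) for all $i, j \in \mathbb{N}_0$. I would prove $[\mathcal{U}_i, \mathcal{U}_j] \subseteq \mathcal{U}_{i+j}$ by induction on $i$. The base case $i = 0$ is exactly the defining relation $\mathcal{U}_{j+1} = \mathcal{U}_j + [\mathcal{U}_0, \mathcal{U}_j]$, which gives $[\mathcal{U}_0, \mathcal{U}_j] \subseteq \mathcal{U}_{j+1}$. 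For the inductive step, write a generator of $\mathcal{U}_{i+1}$ in the form $w$ or $[a, w]$ with $a \in \mathcal{U}_0$, $w \in \mathcal{U}_i$; for the bracket with $v \in \mathcal{U}_j$, apply the Jacobi identity
\begin{equation*}
	[[a,w], v] = [a, [w,v]] - [w, [a,v]],
\end{equation*}
and bound each term: $[w, v] \in \mathcal{U}_{i+j}$ by the induction hypothesis so $[a,[w,v]] \in [\mathcal{U}_0, \mathcal{U}_{i+j}] \subseteq \mathcal{U}_{i+j+1}$, and $[a,v] \in \mathcal{U}_{j+1}$ so $[w,[a,v]] \in \mathcal{U}_{i+j+1}$ again by the induction hypothesis (with the roles arranged so the first index is $i$), using the nestedness to absorb the non-bracket part of $\mathcal{U}_{i+1}$. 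Once this claim is in hand, closure of $W$ under the bracket is immediate, finishing the proof.
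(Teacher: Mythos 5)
Your proposal is correct in substance but takes a genuinely different route from the paper, and it contains one off-by-one slip worth fixing. The paper never proves a filtration estimate: it sets $\widetilde{\mathcal{O}}=\bigcup_{i}\mathcal{U}_i$ and shows directly that $\widetilde{\mathcal{O}}$ is an ideal of $\mathcal{O}$, by introducing the idealizer $\Omega=\{\omega\in\mathcal{O} : [\omega,\widetilde{\mathcal{O}}]\subseteq\widetilde{\mathcal{O}}\}$, observing $x_{12},x_{03}\in\Omega$ (since $[\mathcal{U}_0,\mathcal{U}_i]\subseteq\mathcal{U}_{i+1}$), and using the Jacobi identity once to see that $\Omega$ is closed under the bracket, hence $\Omega=\mathcal{O}$; an ideal (in particular a subalgebra) containing the generators then equals $\mathcal{O}$. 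Your double induction instead proves a quantitative compatibility of the bracket with the filtration, which the idealizer trick bypasses; what you gain is the stronger statement that brackets of filtration pieces land in an explicit filtration piece, at the cost of a longer induction. Both arguments ultimately rest on the same two ingredients: the Jacobi identity and the fact that a subalgebra of $\mathcal{O}$ containing $x_{12},x_{03}$ is all of $\mathcal{O}$.

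The slip: the auxiliary claim as you state it, $[\mathcal{U}_i,\mathcal{U}_j]\subseteq\mathcal{U}_{i+j}$, is false already at $i=j=0$, because $[x_{12},x_{03}]\notin\operatorname{Span}\{x_{12},x_{03}\}=\mathcal{U}_0$ (it spans $U_1$, and $\mathcal{U}_1=U_0+U_1$ is a direct sum by Lemma \ref{lem:subspace U_i}). Your own base case establishes $[\mathcal{U}_0,\mathcal{U}_j]\subseteq\mathcal{U}_{j+1}$, so the statement you actually prove, and all you need, is $[\mathcal{U}_i,\mathcal{U}_j]\subseteq\mathcal{U}_{i+j+1}$: with this bound the inductive step goes through verbatim, since $[a,[w,v]]\in[\mathcal{U}_0,\mathcal{U}_{i+j+1}]\subseteq\mathcal{U}_{i+j+2}$ and $[w,[a,v]]\in[\mathcal{U}_i,\mathcal{U}_{j+1}]\subseteq\mathcal{U}_{i+j+2}$, and the non-bracket part of $\mathcal{U}_{i+1}$ is absorbed by nestedness. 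With that correction (or with the weaker conclusion $[\mathcal{U}_i,\mathcal{U}_j]\subseteq W$ that you already flagged as sufficient), $W$ is a subalgebra containing the generators, and your two-inclusion argument is complete.
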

\begin{proof}
Let $\widetilde{\mathcal{O}}$ denote the right-hand side of \eqref{eq:O=uni U}.
We prove $\mathcal{O}=\widetilde{\mathcal{O}}$.
To this end, we assert that $\widetilde{\mathcal{O}}$ is an ideal of $\mathcal{O}$.
Define the subspace $\Omega$ of $\mathcal{O}$ as follows:
\begin{equation*}
	\Omega = \left\{ \omega \in \mathcal{O} : [w, \widetilde{\mathcal{O}}] \subseteq \widetilde{\mathcal{O}} \right\}.
\end{equation*}
By construction, we have $[\Omega,  \widetilde{\mathcal{O}}] \subseteq \widetilde{\mathcal{O}}$.
To show our assertion, it suffices to show $\Omega = \mathcal{O}$.
We observe that $x_{12}, x_{03}\in \Omega$. 
Now, consider any two elements $u$ and $v$ in $\Omega$.
By the Jacobi identity, the following hold for all $z\in \widetilde{\mathcal{O}}$:
\begin{equation}\label{eq:JI in O}
	[u,[v,z]] +  [v,[z,u]] + [z,[u,v]]=0.
\end{equation}
By construction, the first and second terms on the left-hand side of \eqref{eq:JI in O} belong to $\widetilde{\mathcal{O}}$.
Therefore, the third term also belongs to $\widetilde{\mathcal{O}}$.
Consequently we have $[u,v] \in \Omega$, implying that $\Omega$ is closed under the Lie bracket.
Therefore, $\Omega$ is a subalgebra of $\mathcal{O}$.
Since $\Omega$ contains the generators $x_{12}$, $x_{03}$ of $\mathcal{O}$, we conclude that $\Omega=\mathcal{O}$.
We have shown that $\widetilde{\mathcal{O}}$ is an ideal of $\mathcal{O}$. 
Note that $x_{12}, x_{03} \in \mathcal{U}_0 \subset \widetilde{\mathcal{O}}$.
Since $\widetilde{\mathcal{O}}$ is an ideal containing the generators $x_{12}$, $x_{03}$ of $\mathcal{O}$, we have $\mathcal{O}=\widetilde{\mathcal{O}}$.
The result follows.
\end{proof}

\begin{definition}\label{def:seq a_i b_i}
We define the sequences $\{a_i\}_{i \in \mathbb{N}_0}$ and $\{b_i\}_{i \in \mathbb{N}_0}$ of elements in $\mathcal{O}$ by 
\begin{align}
	&& a_0 & = x_{12},	& a_{2j-1} &= [b_0, a_{2j-2}], & a_{2j} &= [a_0, a_{2j-1}] &&(j=1,2,3\ldots), \label{Def:a_i}\\
	&& b_0 &= x_{03}, & b_{2j-1} &= [a_0, b_{2j-2}], & b_{2j} &= [b_0, b_{2j-1}] && (j=1,2,3\ldots).\label{Def:b_i}
\end{align}
\end{definition}
\noindent
For example, the elements $a_i$ and $b_i$ are described in the following table:
\begin{equation*}
{\renewcommand{\arraystretch}{1.5}
\begin{array}{cccc}
\quad i \quad \ & a_i & & b_i \\
\hline
0 & x_{12} & &x_{03} \\
1 & [x_{03}, x_{12}] & & [x_{12}, x_{03}] \\
2 & [x_{12}, [x_{03}, x_{12}]] & & [x_{03}, [x_{12}, x_{03}]] \\
3 & [x_{03}, [x_{12}, [x_{03}, x_{12}]]] & & [x_{12},[x_{03}, [x_{12}, x_{03}]]] \\
4 & [x_{12}, [x_{03}, [x_{12}, [x_{03}, x_{12}]]]] & & [x_{03},[x_{12},[x_{03}, [x_{12}, x_{03}]]]] \\
5 & [x_{03}, [x_{12}, [x_{03}, [x_{12}, [x_{03}, x_{12}]]]]] & & [x_{12},[x_{03},[x_{12},[x_{03}, [x_{12}, x_{03}]]]]] \\
\vdots & \vdots & & \vdots
\end{array}
}
\end{equation*}
\begin{lemma}\label{lem:cal(U)_i}
For each $i \in \mathbb{N}_0$, the elements $a_i$ and $b_i$ belong to $\mathcal{U}_i$.
\end{lemma}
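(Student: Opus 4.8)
The statement to prove is that $a_i, b_i \in \mathcal{U}_i$ for all $i \in \mathbb{N}_0$. The natural approach is induction on $i$. For the base case $i=0$, we have $a_0 = x_{12}$ and $b_0 = x_{03}$, both of which lie in $\mathcal{U}_0 = \operatorname{Span}\{x_{12}, x_{03}\}$ by definition. For the inductive step, I would assume $a_i, b_i \in \mathcal{U}_i$ and show $a_{i+1}, b_{i+1} \in \mathcal{U}_{i+1}$. Here one must handle the parity of $i$, since the recursion in Definition \ref{def:seq a_i b_i} alternates which of $a_0 = x_{12}$, $b_0 = x_{03}$ is bracketed on the left; but in both cases the new element is obtained by bracketing an element of $\mathcal{U}_i$ with either $a_0$ or $b_0$, each of which lies in $\mathcal{U}_0$.

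The key observation is that $a_{i+1}$ and $b_{i+1}$ each have the form $[c, w]$ where $c \in \{a_0, b_0\} \subseteq \mathcal{U}_0$ and $w \in \{a_i, b_i\}$. By the inductive hypothesis, $w \in \mathcal{U}_i$. Hence $[c, w] \in [\mathcal{U}_0, \mathcal{U}_i]$, and by the defining recursion $\mathcal{U}_{i+1} = \mathcal{U}_i + [\mathcal{U}_0, \mathcal{U}_i]$, we get $[c,w] \in \mathcal{U}_{i+1}$. This gives $a_{i+1}, b_{i+1} \in \mathcal{U}_{i+1}$, completing the induction. To make the argument clean, I would phrase the inductive step uniformly: for any $i \geq 0$, whether $i+1$ is odd or even, the recursions \eqref{Def:a_i} and \eqref{Def:b_i} express $a_{i+1}$ (resp.\ $b_{i+1}$) as a single bracket $[x_{12}, \cdot]$ or $[x_{03}, \cdot]$ of a previously-defined term with index $i$.

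There is essentially no obstacle here; the only point requiring a moment's care is bookkeeping the index parity so that one correctly matches, e.g., $a_{2j-1} = [b_0, a_{2j-2}]$ and $a_{2j} = [a_0, a_{2j-1}]$ against the claim with $i = 2j-1$ and $i = 2j$ respectively. Both cases reduce to the same schema: the index-$(i+1)$ term is a bracket of a standard generator (hence an element of $\mathcal{U}_0$) against the index-$i$ term (in $\mathcal{U}_i$ by induction), so it lands in $[\mathcal{U}_0,\mathcal{U}_i] \subseteq \mathcal{U}_{i+1}$. One could also note that since $\{\mathcal{U}_i\}$ is nested, $\mathcal{U}_i \subseteq \mathcal{U}_{i+1}$, which is what lets the "$+\mathcal{U}_i$" summand absorb any lower-order contributions, though in this particular recursion the bracket form alone suffices.
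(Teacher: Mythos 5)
Your proposal is correct and matches the paper's argument: both proceed by induction on $i$, noting $a_0, b_0 \in \mathcal{U}_0$ and that each of $a_i, b_i$ is a bracket of $x_{12}$ or $x_{03}$ (elements of $\mathcal{U}_0$) with the index-$(i-1)$ term, hence lies in $[\mathcal{U}_0, \mathcal{U}_{i-1}] \subseteq \mathcal{U}_i$. The parity bookkeeping you mention is handled identically in the paper, so there is no substantive difference.
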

\begin{proof}
It is clear that $a_0, b_0 \in \mathcal{U}_0$. 
For each integer $i\geq 1$, using \eqref{Def:a_i} and \eqref{Def:b_i}, we have both
$$
	a_i \in [\mathcal{U}_0, a_{i-1}] \subset [\mathcal{U}_0, \mathcal{U}_{i-1}], \qquad \qquad
	b_i \in [\mathcal{U}_0, b_{i-1}] \subset [\mathcal{U}_0, \mathcal{U}_{i-1}].
$$
Since $[\mathcal{U}_0, \mathcal{U}_{i-1}] \subset \mathcal{U}_i$, the result follows.
\end{proof}

\noindent
By Lemma \ref{lem:cal(U)_i} and since the sequence $\{\mathcal{U}_i\}_{i\in \mathbb{N}_0}$ is nested, the set $\{a_j, b_j \mid 0\leq j \leq i\}$ is contained in $\mathcal{U}_i$.
For notational convention, we define the linear maps $G, H : \tet \to \tet$ by
\begin{align}
	G: \quad z \quad \longmapsto \quad [x_{12}, [x_{03}, z ]], \label{eq:op G}\\
	H: \quad z \quad \longmapsto \quad [x_{03}, [x_{12}, z ]],\label{eq:op H}
\end{align}
for all $z \in \tet$.
Note that the subalgebra $\mathcal{O}$ is invariant under both $G$ and $H$.
Using $G$, $H$ the elements $a_i$, $b_i$ are described as follows: for $i\in \mathbb{N}_0$
\begin{align}
	&& a_{2i}  &= G^i(x_{12}), & b_{2i} &= H^i(x_{03}), && \label{eq:a_i, b_i(1)}\\
	&& a_{2i+1}  &= [x_{03},G^i(x_{12})], & b_{2i+1} &= [x_{12}, H^i(x_{03})].&& \label{eq:a_i, b_i(2)} 
\end{align}

\begin{definition}\label{def:U_i}
For $i\in \mathbb{N}_0$, define the subspace $U_i$ by 
\begin{equation*}
	U_i = \operatorname{Span}\{a_i, b_i\}.
\end{equation*}
\end{definition}

\noindent
By \eqref{eq:a_i, b_i(1)}, \eqref{eq:a_i, b_i(2)} the following hold for all $i\in \mathbb{N}_0$:
\begin{equation*}
	U_{2i} = \operatorname{Span}\{ G^i(x_{12}), H^i(x_{03})\},  
	\qquad 
	U_{2i+1} = \operatorname{Span}\{ [x_{03}, G^i(x_{12})], [x_{12}, H^i(x_{03})] \}.
\end{equation*}
\begin{example}
For $0\leq i \leq 8$, we display $a_i$, $b_i$, and their span:
\begin{equation*}
{\renewcommand{\arraystretch}{1.5}
\begin{array}{cccc}
\quad i \quad \ & a_i & b_i & \text{Span}\{a_i, b_i\}\\
\hline
0 & x_{12} & x_{03} & U_0\\
1 & [x_{03}, x_{12}] & [x_{12}, x_{03}] & U_1\\
2 & G(x_{12}) & H(x_{03}) & U_2\\
3 & [x_{03}, G(x_{12})] & [x_{12},H(x_{03})] & U_3\\
4 & G^2(x_{12}) & H^2(x_{03}) & U_4\\
5 & [x_{03}, G^2(x_{12})] & [x_{12},H^2(x_{03})] & U_5\\
6 & G^3(x_{12}) & H^3(x_{03}) & U_6\\
7 & [x_{03}, G^3(x_{12})] & [x_{12},H^3(x_{03})] & U_7\\
8 & G^4(x_{12}) & H^4(x_{03}) & U_8
\end{array}
}
\end{equation*}
\end{example}

\noindent
We note that $a_i$, $b_i$ might not be a basis for $U_i$ due to the possibility of their linear dependence.

\begin{lemma}\label{lem:subspace U_i}
The following {\rm(i)}--{\rm(iv)} hold.
\begin{enumerate}[label=(\roman*), font=\normalfont]
	\item For each odd $i\in \mathbb{N}_0$ we have $a_i \ne 0$ and $b_i=-a_i$. The element $a_i$ is a basis for $U_i$.
	\item For each even $i\in \mathbb{N}_0$ the elements $a_i$, $b_i$ are linearly independent. The elements $a_i, b_i$ are a basis for $U_i$.
	\item For $i \in \mathbb{N}_0$, 
	\begin{equation*}
		\mathcal{U}_i = U_0 + U_1 + \cdots + U_i \qquad (\text{\rm direct sum}).
	\end{equation*}
	\item The Onsager subalgebra $\mathcal{O}$ satisfies
	\begin{equation}\label{eq(2):sumUi=O}
		\mathcal{O} = \sum_{i \in \mathbb{N}_0} U_i \qquad (\text{\rm direct sum}).
	\end{equation}
\end{enumerate}
\end{lemma}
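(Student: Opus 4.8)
The whole statement is really about understanding the sequences $\{a_i\}$, $\{b_i\}$ well enough. The plan is to push all computations through the isomorphism $\sigma:\tet\to\L$ of Lemma~\ref{def: sigma} and work with the vector-valued form of Section~\ref{sec:vvf:L(sl2)+}. First I would compute $\sigma(a_i)$ and $\sigma(b_i)$ explicitly. Since $\sigma(x_{12})=x\otimes 1$ and $\sigma(x_{03})=y\otimes t+z\otimes(t-1)$, one applies Lemma~\ref{v-val comp} repeatedly. A short induction should show that for even $i=2n$ the vectors $\sigma(a_{2n})$ and $\sigma(b_{2n})$ are supported on disjoint coordinates (one on the $x$-slot, the other on the $y,z$-slots), while for odd $i$ both $\sigma(a_i)$ and $\sigma(b_i)$ land in the same two-dimensional coordinate pattern and in fact are negatives of each other. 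Concretely, I expect something like $\sigma(a_{2n}) = x\otimes c_n\,t^{?}(t-1)^{?}$ for an explicit nonzero scalar $c_n$ (the recursion $G$ multiplies the $\mathcal{A}$-component by $t(t-1)$ up to a scalar, via Lemma~\ref{v-val comp}(i),(iii)), and similarly $\sigma(b_{2n})=y\otimes(\cdots)+z\otimes(\cdots)$ with both Laurent-polynomial entries nonzero.

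Granting those explicit formulas, parts (i) and (ii) are immediate: for odd $i$, the formula $\sigma(b_i)=-\sigma(a_i)$ together with $\sigma(a_i)\neq 0$ (the scalar never vanishes in characteristic~$0$) gives $b_i=-a_i\neq 0$, so $U_i=\operatorname{Span}\{a_i\}$ is one-dimensional; for even $i$, the supports of $\sigma(a_i)$ and $\sigma(b_i)$ are disjoint, hence the two are linearly independent and form a basis of $U_i$. For (iii), I would argue by induction on $i$. The base case $i=0$ is the definition $\mathcal{U}_0=U_0=\operatorname{Span}\{x_{12},x_{03}\}$. For the inductive step, $\mathcal{U}_{i}=\mathcal{U}_{i-1}+[\mathcal{U}_0,\mathcal{U}_{i-1}]$, and by Lemma~\ref{lem:cal(U)_i} (plus the nesting) one has $\mathcal{U}_i\supseteq U_0+\cdots+U_i$; the reverse inclusion needs that $[\mathcal{U}_0,\mathcal{U}_{i-1}]\subseteq U_0+\cdots+U_i$, which follows from the explicit Lie-bracket action once one checks $[x_{12},U_j]$ and $[x_{03},U_j]$ lie in $U_{j-1}+U_j+U_{j+1}$ — this in turn is visible from the $\sigma$-picture, since bracketing with $x\otimes1$ or with $y\otimes t+z\otimes(t-1)$ shifts the Laurent degree by at most one. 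Directness of the sum is the key point: since the $\sigma(a_i),\sigma(b_i)$ across all $i$ are supported on the linearly independent family of monomials $\{x\otimes t^?(t-1)^?, y\otimes t^?(t-1)^?, z\otimes t^?(t-1)^?\}$ inside the basis of Lemma~\ref{lem:basis for L(sl2)+}, distinct $U_i$ meet only in $0$; I would phrase this as: the nonzero basis vectors of the various $U_i$ together are a linearly independent subset of $\L$.

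For (iv), combine (iii) with Lemma~\ref{lem:O=unionU}: $\mathcal{O}=\bigcup_i\mathcal{U}_i=\bigcup_i(U_0+\cdots+U_i)=\sum_{i\in\mathbb{N}_0}U_i$, and the sum is direct because every finite partial sum is direct by (iii).

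\textbf{Main obstacle.} The crux is getting the explicit description of $\sigma(a_{2n})$ and $\sigma(b_{2n})$ right — in particular verifying that the scalar coefficient $c_n$ (equivalently, the leading coefficient produced by iterating $G$, resp.\ $H$) is never zero, and that for odd index the $a$- and $b$-formulas really are exact negatives rather than merely proportional. This is a bookkeeping induction using Lemma~\ref{v-val comp}, but it is where all the genuine content lives; once the formulas are in hand, the linear-independence and direct-sum claims are formal consequences of Lemma~\ref{lem:basis for L(sl2)+}. A secondary technical point is the inclusion $[\mathcal{U}_0,\mathcal{U}_{i-1}]\subseteq U_0+\cdots+U_i$ needed for (iii); this requires knowing the Lie bracket of a standard generator with an arbitrary $U_j$, which again reduces to a degree count in the $\mathcal{A}$-grading after applying $\sigma$.
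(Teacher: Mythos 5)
Your overall route coincides with the paper's: push everything through $\sigma$, compute $a_i^{\sigma}$, $b_i^{\sigma}$ by induction with the vector-valued calculus of Lemma \ref{v-val comp}, read (i), (ii) off the explicit formulas, prove (iii) by showing $U_i$ meets $\mathcal{U}_{i-1}$ trivially, and deduce (iv) from (iii) together with Lemma \ref{lem:O=unionU}. However, the explicit shape you predict for these $\sigma$-images --- the step you yourself flag as the crux --- is wrong, and two of your justifications rest on it. The operator $G$ does not preserve the $x$-slot: already $\sigma G\sigma^{-1}(x\otimes 1)=-4\bigl(x\otimes(2t-1)+y\otimes t+z\otimes(t-1)\bigr)$ by \eqref{eq: op G}, and in general (Lemma \ref{lem:G,H U_i}) one has $a_{2i}^{\sigma}=(-1)^i4^i\bigl(x\otimes(2t-1)^i+y\otimes t(2t-1)^{i-1}+z\otimes(t-1)(2t-1)^{i-1}\bigr)$ and $b_{2i}^{\sigma}=(-1)^i4^i\bigl(x\otimes(2t-1)^{i-1}+y\otimes t(2t-1)^{i}+z\otimes(t-1)(2t-1)^{i}\bigr)$, so for $i\geq 1$ both vectors have all three slots nonzero (similarly $a_{2i+1}^{\sigma}=-b_{2i+1}^{\sigma}$ occupies three slots, not two). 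Consequently your argument for (ii) (``disjoint supports, hence linearly independent'') starts from a false premise, and your directness argument in (iii) (``the $\sigma(a_i),\sigma(b_i)$ are supported on the linearly independent family of monomials'') is likewise unavailable as stated: these vectors are not scalar multiples of the basis monomials of Lemma \ref{lem:basis for L(sl2)+}.

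The gap is repairable, and the repair is exactly what the paper does. With the correct formulas, (i) is immediate, and independence in (ii) follows, e.g., because the $x$-components $(2t-1)^i$ and $(2t-1)^{i-1}$ are non-proportional. For (iii) the needed fact is that no \emph{nonzero combination} $\alpha a_i+\beta b_i$ lies in $\mathcal{U}_{i-1}$ (checking $a_i,b_i$ individually is not quite enough for the even case); this follows from a degree comparison in $t$ using the same formulas, and then $U_i\cap\mathcal{U}_{i-1}=\{0\}$ gives directness by induction. Your remaining points --- that $\mathcal{U}_i=U_0+\cdots+U_i$ because bracketing with $x_{12}^{\sigma}$ or $x_{03}^{\sigma}$ shifts degrees by at most one, and that (iv) follows from (iii) and Lemma \ref{lem:O=unionU} --- agree with the paper and are sound. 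So the architecture is right, but as written the key computational claim would fail, and (ii) and the directness in (iii) must be rerun from the correct formulas.
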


\noindent
The proof of Lemma \ref{lem:subspace U_i} will appear in the next section.

\begin{remark}\label{rmk:O', O''}
In Section \ref{Intro}, we mentioned that the symmetric group $S_4$ acts on $\tet$.
We view each element of $S_4$ as an automorphism of $\tet$.
Let $\prime$ denote the permutation $(123)\in S_4$.
We consider the images of $\mathcal{O}$ under the maps $\prime$ and $\prime\prime$.
Then $\mathcal{O}'$ (resp. $\mathcal{O}''$) is the subalgebra of $\tet$ generated by $x_{23}$ and $x_{01}$ (resp. $x_{31}$ and $x_{02}$).
We note that $\mathcal{O}'$ and $\mathcal{O}''$ are the Onsager subalgebras of $\tet$.
Moreover, by \cite[Theorem 11.6]{2007HarTer} we have
\begin{equation*}
	\tet = \mathcal{O} + \mathcal{O}' + \mathcal{O}'' \qquad \text{(direct sum)}.
\end{equation*}
In Sections \ref{Sec: Subsp Ui}--\ref{Sec:x_ij-like O}, we have presented a number of results for $\mathcal{O}$. 
By applying the automorphisms $\prime$ and $\prime\prime$ to these results, we derive similar results for $\mathcal{O}'$ and $\mathcal{O}''$. 
Refer to the Appendix for the detailed data.
\end{remark}

\section{A basis for $\mathcal{O}$}\label{sec:L(sl2)+}

In the previous section, we discussed the elements $a_i$, $b_i$ $(i\in \mathbb{N}_0)$ of $\mathcal{O}$ along with the subspaces $\{U_i\}_{i\in \mathbb{N}_0}$ of $\mathcal{O}$.
In this section, we will show that the elements $a_{2i}, b_{2i}, a_{2i+1}$ $(i\in \mathbb{N}_0)$ form a basis for $\mathcal{O}$.
To do this, we will apply the Lie algebra isomorphism $\sigma: \tet \to L(\sl2)^+$ from Lemma \ref{def: sigma} to $a_i$ and $b_i$. 
We will show that the image $a^\sigma_{2i}$, $b^\sigma_{2i}$, $a^\sigma_{2i+1}$ $(i\in \mathbb{N}_0)$ form a basis for $\mathcal{O}^\sigma$.

\medskip
Recall the linear operators $G$, $H$ from \eqref{eq:op G}, \eqref{eq:op H}.
Consider the compositions:
\begin{align*}
	\sigma G \sigma^{-1} : \quad & L(\sl2)^+ \  \xrightarrow{\quad \sigma^{-1} \quad } \ 
	\tet \  \xrightarrow{\quad G \quad } \ 
	\tet \  \xrightarrow{\quad \sigma \quad } \
	L(\sl2)^+,\\
	\sigma H \sigma^{-1} : \quad & L(\sl2)^+ \  \xrightarrow{\quad \sigma^{-1} \quad } \ 
	\tet \  \xrightarrow{\quad H \quad } \ 
	\tet \  \xrightarrow{\quad \sigma \quad } \
	L(\sl2)^+.
\end{align*}
The maps $\sigma G \sigma^{-1}$ and $\sigma H \sigma^{-1}$ are described as follows.
For $u \in \sl2$ and $\varphi \in \mathcal{A}$, the map $\sigma G \sigma^{-1}$ sends
\begin{equation}\label{eq:action G}
	u \otimes \varphi \quad \longmapsto \quad [x \otimes 1, [ y\otimes t + z\otimes (t-1), u\otimes \varphi]], 
\end{equation}
and $\sigma H \sigma^{-1}$ sends
\begin{equation}\label{eq:action H}
	u\otimes \varphi \quad \longmapsto \quad [y\otimes t + z\otimes (t-1), [x \otimes 1, u\otimes \varphi]]. 
\end{equation}
Observe that $\sigma G \sigma^{-1} (x_{ij}^\sigma) = (G(x_{ij}))^\sigma$ and $\sigma H \sigma^{-1} (x_{ij}^\sigma) = (H(x_{ij}))^\sigma$.
The vector-valued form \eqref{eq:vvf} in $L(\sl2)^+$ is useful when we compute the actions of $\sigma G \sigma^{-1}$ and $\sigma H \sigma^{-1}$ on $L(\sl2)^+$.
This is illustrated in the following lemma.

\begin{lemma}\label{lem:operator G}
For  $f,g,h \in \mathcal{A}$,
\begin{align}
	\sigma G \sigma^{-1}: \quad 
	\begin{pmatrix}
	f \\ g\\ h
	\end{pmatrix}
	& \quad \longmapsto \quad (-4) 
	\begin{pmatrix}
	(2t-1)f \\ tf+(t-1)g-th \\ (t-1)f + (1-t)g + th
	\end{pmatrix}, \label{eq: op G}\\
	\sigma H \sigma^{-1}: \quad 
	\begin{pmatrix}
	f \\ g\\ h
	\end{pmatrix}
	& \quad \longmapsto \quad (-4) 
	\begin{pmatrix}
	g-h \\ (2t-1)g \\ (2t-1)h
	\end{pmatrix}. \label{eq: op H}
\end{align}
\end{lemma}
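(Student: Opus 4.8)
The claim is a pure computation in $L(\sl2)^+$ using the vector-valued formalism of Section~\ref{sec:vvf:L(sl2)+}, so the strategy is to expand the right-hand sides of \eqref{eq:action G} and \eqref{eq:action H} by repeated use of Lemma~\ref{v-val comp} and linearity of the bracket. First I would compute $\sigma G \sigma^{-1}$ by applying it to a general vector $\begin{pmatrix} f \\ g \\ h \end{pmatrix} = x\otimes f + y\otimes g + z\otimes h$. By bilinearity it suffices to treat the three summands separately. For $x\otimes f$, one needs $[x\otimes 1, [\,y\otimes t + z\otimes(t-1),\ x\otimes f\,]]$; the inner bracket splits into $[y\otimes t, x\otimes f]$ and $[z\otimes(t-1), x\otimes f]$, each evaluated by Lemma~\ref{v-val comp}(i)/(iii) (with a sign flip to orient the brackets correctly), and then the outer bracket with $x\otimes 1$ is another application of parts (i) and (iii). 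The same routine handles $y\otimes g$ and $z\otimes h$. Summing the three contributions and collecting the coefficients of $x$, $y$, $z$ should yield exactly the stated vector $(-4)\begin{pmatrix}(2t-1)f \\ tf+(t-1)g-th \\ (t-1)f+(1-t)g+th\end{pmatrix}$.

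For $\sigma H \sigma^{-1}$ I would proceed identically, now with the order of the two factors reversed: compute $[\,y\otimes t + z\otimes(t-1),\ [x\otimes 1,\ x\otimes f + y\otimes g + z\otimes h]\,]$. Here the inner bracket $[x\otimes 1,\ \cdot\,]$ kills the $x\otimes f$ term and produces, via Lemma~\ref{v-val comp}, combinations of $x$, $y$, $z$ tensored with $g$ and $h$; bracketing the result against $y\otimes t + z\otimes(t-1)$ and collecting coefficients should give $(-4)\begin{pmatrix} g-h \\ (2t-1)g \\ (2t-1)h\end{pmatrix}$. A useful sanity check along the way: applying \eqref{eq: op G} and \eqref{eq: op H} to the standard generators $x^\sigma_{12}=\begin{pmatrix}1\\0\\0\end{pmatrix}$ and $x^\sigma_{03}=\begin{pmatrix}0\\t\\t-1\end{pmatrix}$ and comparing against direct computation of $G(x_{12})$, $H(x_{03})$ from the definitions \eqref{eq:op G}, \eqref{eq:op H} via Lemma~\ref{def: sigma}.

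There is no real obstacle here — it is a finite, deterministic calculation — but the step most prone to error is bookkeeping the signs and the factors of $2$: each application of Lemma~\ref{v-val comp} contributes a factor $2$, the brackets must be reoriented (e.g.\ $[y\otimes t, x\otimes f] = -[x\otimes f, y\otimes t]$, and part~(i) of the lemma is stated for $[x\text{-part},\,y\text{-part}]$ in that order), and the $x\otimes 1$ outer bracket in the $G$ case acts on a vector whose $y$- and $z$-components need parts (i) and (iii) with opposite signs. So the care required is in tracking these; the overall factor $(-4)$ emerges as $2\times 2$ with a global sign determined by the bracket orientations. I would organize the computation as a short table of the nine elementary brackets $[u\otimes p,\ v\otimes q]$ for $u,v\in\{x,y,z\}$ needed, so that assembling the final answer is mechanical.
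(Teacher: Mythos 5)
Your proposal is correct and follows essentially the same route as the paper: the paper's proof simply rewrites \eqref{eq:action G}, \eqref{eq:action H} in vector-valued form and evaluates the nested brackets via Lemma \ref{v-val comp}, which is exactly the bilinear expansion and sign/factor-of-$2$ bookkeeping you describe.
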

\begin{proof}
By \eqref{eq:action G}, \eqref{eq:action H}, 
\begin{align}
	\sigma G \sigma^{-1}: \quad 
	\begin{pmatrix}
	f \\ g\\ h
	\end{pmatrix}
	& \quad \longmapsto \quad 
	\left[
	\begin{pmatrix} 1 \\ 0 \\ 0 \end{pmatrix}, 
	\left[
	\begin{pmatrix} 0 \\ t \\ t-1 \end{pmatrix}, 
	\begin{pmatrix} f \\ g \\ h \end{pmatrix}
	\right]
	\right],\label{eq:G(xf+yg+zh)}\\
	\sigma H \sigma^{-1}: \quad 
	\begin{pmatrix}
	f \\ g\\ h
	\end{pmatrix}
	& \quad \longmapsto \quad
	\left[
	\begin{pmatrix} 0 \\ t \\ t-1 \end{pmatrix}, 
	\left[
	\begin{pmatrix} 1 \\ 0 \\ 0 \end{pmatrix}, 
	\begin{pmatrix} f \\ g \\ h \end{pmatrix}
	\right]
	\right] \label{eq:H(xf+yg+zh)}.
\end{align}
Evaluate the right-hand sides of \eqref{eq:G(xf+yg+zh)}, \eqref{eq:H(xf+yg+zh)} using Lemma \ref{v-val comp} and simplify the results to get \eqref{eq: op G}, \eqref{eq: op H}, respectively.
\end{proof}

\noindent
Now, we express the $\sigma$-images of $a_i$ and $b_i$ in vector-valued form.

\begin{lemma}\label{lem:G,H U_i}
For $i \in \mathbb{N}_0$, the map $\sigma$ sends
\begin{align}
	\label{eq:Gk(x_12)}
	a_{2i}
	& \quad \longmapsto \quad (-1)^i4^i\begin{pmatrix} (2t-1)^i \\ t(2t-1)^{i-1} \\ (t-1)(2t-1)^{i-1} \end{pmatrix}, \\
	\label{eq:Hk(x_03)}
	b_{2i}
	& \quad \longmapsto \quad (-1)^i4^i\begin{pmatrix} (2t-1)^{i-1} \\ t(2t-1)^i \\ (t-1)(2t-1)^{i} \end{pmatrix}, \\
	\label{eq:[x_03,Gk]}
	a_{2i+1}
	& \quad \longmapsto \quad 2(-1)^{i+1}4^i \begin{pmatrix} (2t-1)^i \\ t(2t-1)^{i} \\ (1-t)(2t-1)^{i} \end{pmatrix},\\
	\label{eq:[x_12,Hk]}
	b_{2i+1}
	& \quad \longmapsto \quad 2(-1)^i4^i\begin{pmatrix} (2t-1)^i \\ t(2t-1)^{i} \\ (1-t)(2t-1)^{i} \end{pmatrix},
\end{align}
where we interpret $(2t-1)^{-1}:=0$.
\end{lemma}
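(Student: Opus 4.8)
The plan is to prove all four displayed formulas simultaneously by induction on $i$, using the explicit vector-valued descriptions of $\sigma G \sigma^{-1}$ and $\sigma H \sigma^{-1}$ from Lemma \ref{lem:operator G} together with the recursive descriptions \eqref{eq:a_i, b_i(1)}, \eqref{eq:a_i, b_i(2)} of $a_i$, $b_i$ in terms of $G$, $H$ and the brackets with $x_{03}$, $x_{12}$. First I would record the base case: for $i=0$ we have $a_0 = x_{12}$, so $a_0^\sigma = x\otimes 1$, which in vector-valued form is $\begin{pmatrix} 1 \\ 0 \\ 0 \end{pmatrix}$; this agrees with \eqref{eq:Gk(x_12)} at $i=0$ under the convention $(2t-1)^{-1}:=0$. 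Likewise $b_0 = x_{03}$ gives $b_0^\sigma = y\otimes t + z\otimes(t-1) = \begin{pmatrix} 0 \\ t \\ t-1 \end{pmatrix}$, matching \eqref{eq:Hk(x_03)} at $i=0$. For the odd formulas at $i=0$ one computes $a_1^\sigma = [x_{03}^\sigma, a_0^\sigma]$ and $b_1^\sigma = [x_{12}^\sigma, b_0^\sigma]$ directly via Lemma \ref{v-val comp} (or cites Lemma \ref{lem:[x_ij,x_hk]^sigma}), obtaining $\begin{pmatrix} -2 \\ -2t \\ 2(t-1) \end{pmatrix}$ and $\begin{pmatrix} 2 \\ 2t \\ 2(1-t) \end{pmatrix}$ respectively, which agree with \eqref{eq:[x_03,Gk]} and \eqref{eq:[x_12,Hk]} at $i=0$.

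Next I would carry out the inductive step for the even-index formulas. By \eqref{eq:a_i, b_i(1)} we have $a_{2i+2} = G(a_{2i})$, so $a_{2i+2}^\sigma = \sigma G \sigma^{-1}(a_{2i}^\sigma)$; substituting the inductive form \eqref{eq:Gk(x_12)} for $a_{2i}^\sigma$ into \eqref{eq: op G} and simplifying the three components — using $(2t-1)\cdot t(2t-1)^{i-1} + (t-1)(t-1)(2t-1)^{i-1} - (t-1)(t-1)(2t-1)^{i-1}$ and similar — should collapse to $(-1)^{i+1}4^{i+1}\begin{pmatrix} (2t-1)^{i+1} \\ t(2t-1)^{i} \\ (t-1)(2t-1)^{i} \end{pmatrix}$, which is \eqref{eq:Gk(x_12)} at $i+1$. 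The computation for $b_{2i+2} = H(b_{2i})$ using \eqref{eq: op H} is analogous and even simpler, since $\sigma H \sigma^{-1}$ has a near-diagonal form. Then for the odd formulas, from \eqref{eq:a_i, b_i(2)} we have $a_{2i+1}^\sigma = [x_{03}^\sigma, a_{2i}^\sigma]$, so I would plug the (already proven) form of $a_{2i}^\sigma$ into the bracket $\left[\begin{pmatrix} 0 \\ t \\ t-1 \end{pmatrix}, \cdot\right]$, expand using Lemma \ref{v-val comp}, and check the result equals \eqref{eq:[x_03,Gk]}; and $b_{2i+1}^\sigma = [x_{12}^\sigma, b_{2i}^\sigma] = \left[\begin{pmatrix} 1 \\ 0 \\ 0 \end{pmatrix}, \cdot\right]$ applied to the form of $b_{2i}^\sigma$, checked against \eqref{eq:[x_12,Hk]}.

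The substantive work is entirely bookkeeping: tracking the powers of $(2t-1)$, the signs $(-1)^i$, the factors $4^i$ and the stray $2$'s, and making sure the bracket expansions from Lemma \ref{v-val comp} simplify correctly. There is no conceptual obstacle — the structure is a clean induction with the two operators $\sigma G\sigma^{-1}$, $\sigma H\sigma^{-1}$ doing all the work. The one point requiring a little care is the boundary convention $(2t-1)^{-1}:=0$: I would verify that it is consistent, namely that the formulas for $a_2^\sigma$ (the $i=1$ case of \eqref{eq:Gk(x_12)}) and $b_0^\sigma$ are produced correctly, and that when the operator formulas \eqref{eq: op G}, \eqref{eq: op H} are applied to the $i=0$ vectors the outputs land in the claimed form at $i=1$ without the convention causing a spurious term. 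A secondary subtlety is that the odd formulas can be derived in two ways — directly from $a_{2i}^\sigma$, $b_{2i}^\sigma$ via a single bracket, or by noting $a_{2i+1} = [b_0, a_{2i}]$ and comparing with the $b$-side — and I would pick whichever keeps the algebra shortest, most likely the direct route, since the even formulas are established first in the induction.
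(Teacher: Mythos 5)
Your proposal is correct and follows essentially the same route as the paper: induction on $i$ using the vector-valued operator formulas \eqref{eq: op G}, \eqref{eq: op H} for $\sigma G\sigma^{-1}$, $\sigma H\sigma^{-1}$ to handle $a_{2i}$, $b_{2i}$, and then a single bracket with $x_{03}^\sigma$ (resp. $x_{12}^\sigma$) via Lemma \ref{v-val comp} for the odd-index elements, with the convention $(2t-1)^{-1}:=0$ checked at the base case. The computations you sketch do collapse as claimed (apart from a harmless typo in your illustrative cancellation, where $t(t-1)(2t-1)^{i-1}$ appears as $(t-1)(t-1)(2t-1)^{i-1}$), so there is no gap.
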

\begin{proof}
First, we show the equation \eqref{eq:Gk(x_12)}.
Since $(a_{2i})^\sigma = (G^i(x_{12}))^\sigma = \sigma {G}^i \sigma^{-1} (x_{12}^\sigma)$, we will evaluate $\sigma {G}^i \sigma^{-1}(x_{12}^\sigma)$. 
Recalling that $x_{12}^\sigma = x \otimes 1 = (1,0,0)^\top$ and using \eqref{eq: op G}, we find $\sigma{G}\sigma^{-1} (x_{12}^\sigma) = (2t-1, t, t-1)^\top$. 
Using induction on $i$, evaluate $\sigma {G}^i \sigma^{-1} (x_{12}^\sigma)$ and simplify the result to get the right-hand side of \eqref{eq:Gk(x_12)}.
We have shown \eqref{eq:Gk(x_12)}. 
Similarly, we derive \eqref{eq:Hk(x_03)}--\eqref{eq:[x_12,Hk]} in a comparable manner.
\end{proof}

\begin{example} 
We display the elements $a_{2i}^\sigma$, $b_{2i}^\sigma$, $a_{2i+1}^\sigma$, $b_{2i+1}^\sigma$ for $0\leq i \leq 4$:

{\renewcommand{\arraystretch}{1.3}
\begin{equation*}
\begin{tabular}{c|cc}
$\quad i \quad $ & $a^\sigma_{2i}$ & $b^\sigma_{2i}$ \\  
\hline
$0$ 
	& $\begin{pmatrix}1 \\[-0.5em] 0\\[-0.5em] 0 \end{pmatrix}$ & $\begin{pmatrix} 0 \\[-0.5em] t \\[-0.5em] t-1 \end{pmatrix}$ \\
\hline
$1$
	& $-4\begin{pmatrix} 2t-1 \\[-0.5em] t \\[-0.5em] t-1 \end{pmatrix}$
	& $-4\begin{pmatrix} 1\\[-0.5em] t(2t-1) \\[-0.5em] (t-1)(2t-1) \end{pmatrix}$ \\
\hline
$2$
	& $4^2\begin{pmatrix} (2t-1)^2\\[-0.5em] t(2t-1) \\[-0.5em] (t-1)(2t-1) \end{pmatrix}$  
	& $4^2\begin{pmatrix} 2t-1\\[-0.5em] t(2t-1)^2 \\[-0.5em] (t-1)(2t-1)^2 \end{pmatrix}$ \\
\hline
$3$
	& $-4^3\begin{pmatrix} (2t-1)^3\\[-0.5em] t(2t-1)^2 \\[-0.5em] (t-1)(2t-1)^2 \end{pmatrix}$ 
	& $-4^3\begin{pmatrix} (2t-1)^2\\[-0.5em] t(2t-1)^3 \\[-0.5em] (t-1)(2t-1)^3 \end{pmatrix}$ \\
\hline
$4$
	& $4^4\begin{pmatrix} (2t-1)^4\\[-0.5em] t(2t-1)^3 \\[-0.5em] (t-1)(2t-1)^3 \end{pmatrix}$  
	& $4^4\begin{pmatrix} (2t-1)^3\\[-0.5em] t(2t-1)^4 \\[-0.5em] (t-1)(2t-1)^4 \end{pmatrix}$ 
\end{tabular},
\end{equation*}
and
\begin{equation*}
\begin{tabular}{c|cc}
$\quad i \quad $ & $a^\sigma_{2i+1}$ & $b^\sigma_{2i+1}$ \\  
\hline
$0$ 
	& $-2\begin{pmatrix} 1 \\[-0.5em] t \\[-0.5em] 1-t \end{pmatrix}$ & $2 \begin{pmatrix} 1 \\[-0.5em] t \\[-0.5em] 1-t\end{pmatrix}$\\
\hline
$1$
	& $2\cdot4\begin{pmatrix} 2t-1\\[-0.5em] t(2t-1) \\[-0.5em] (1-t)(2t-1) \end{pmatrix}$
	& $-2\cdot4\begin{pmatrix} 2t-1\\[-0.5em] t(2t-1) \\[-0.5em] (1-t)(2t-1) \end{pmatrix}$  \\
\hline
$2$
	& $-2\cdot4^2\begin{pmatrix} (2t-1)^2\\[-0.5em] t(2t-1)^2 \\[-0.5em] (1-t)(2t-1)^2 \end{pmatrix}$
	& $2\cdot4^2\begin{pmatrix} (2t-1)^2\\[-0.5em] t(2t-1)^2 \\[-0.5em] (1-t)(2t-1)^2 \end{pmatrix}$  \\
\hline
$3$
	& $2\cdot4^3\begin{pmatrix} (2t-1)^3\\[-0.5em] t(2t-1)^3 \\[-0.5em] (1-t)(2t-1)^3 \end{pmatrix}$
	& $-2\cdot4^3\begin{pmatrix} (2t-1)^3\\[-0.5em] t(2t-1)^3 \\[-0.5em] (1-t)(2t-1)^3 \end{pmatrix}$  \\
\hline
$4$
	& $-2\cdot4^4\begin{pmatrix} (2t-1)^4\\[-0.5em] t(2t-1)^4 \\[-0.5em] (1-t)(2t-1)^4 \end{pmatrix}$ 
	& $2\cdot4^4\begin{pmatrix} (2t-1)^4\\[-0.5em] t(2t-1)^4 \\[-0.5em] (1-t)(2t-1)^4 \end{pmatrix}$  
\end{tabular}.
\end{equation*}
}
\end{example}

We now prove Lemma \ref{lem:subspace U_i}.

\begin{proof}[Proof of Lemma \ref{lem:subspace U_i}]
(i): By \eqref{eq:[x_03,Gk]} and  \eqref{eq:[x_12,Hk]}. \\
(ii): By \eqref{eq:Gk(x_12)} and \eqref{eq:Hk(x_03)}. \\
(iii): By construction, we have $\mathcal{U}_i = \sum^i_{j=0} U_j$.
By Lemma \ref{lem:G,H U_i}, we find that for each $i\geq 1$, $a_i, b_i \notin \mathcal{U}_{i-1}$.
This implies that $U_i \cap \mathcal{U}_{i-1}=\{0\}$. 
Therefore, the sum $\sum^i_{j=0} U_j$ is direct.\\
(iv): By part (iii), we have $\bigcup_{i\in \mathbb{N}_0} \mathcal{U}_i = \sum_{i \in \mathbb{N}_0} U_i$, which is a direct sum. 
By Lemma \ref{lem:O=unionU}, the result follows. 
\end{proof}

Consider the $\sigma$-image of $\mathcal{O}$.
Note that $\mathcal{O}^\sigma$ is the Lie subalgebra of $L(\sl2)^+$  generated by $x^\sigma_{12}$, $x^\sigma_{03}$.

\begin{proposition}\label{prop:basisOsigma}
The Lie algebra $\mathcal{O}^\sigma$ has a basis
\begin{equation*}
	a_{2i}^\sigma,\qquad b_{2i}^\sigma, \qquad  a_{2i+1}^\sigma, \qquad \qquad i\in \mathbb{N}_0.
\end{equation*}
\end{proposition}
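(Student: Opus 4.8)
The plan is to prove that $\{a_{2i}^\sigma, b_{2i}^\sigma, a_{2i+1}^\sigma \mid i \in \mathbb{N}_0\}$ is a basis for $\mathcal{O}^\sigma$ by combining spanning and linear independence. Since $\sigma$ is an isomorphism, it suffices to show that $\{a_{2i}, b_{2i}, a_{2i+1} \mid i \in \mathbb{N}_0\}$ is a basis for $\mathcal{O}$, and this I would extract directly from Lemma \ref{lem:subspace U_i}. Indeed, by part (iv) of that lemma, $\mathcal{O} = \sum_{i \in \mathbb{N}_0} U_i$ is a direct sum; by part (i), for odd $i$ the single element $a_i$ is a basis for $U_i$ (and $b_i = -a_i$); by part (ii), for even $i$ the pair $a_i, b_i$ is a basis for $U_i$. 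Taking the union of these bases over all $i$, we obtain that $\{a_{2i}, b_{2i} \mid i \in \mathbb{N}_0\} \cup \{a_{2i+1} \mid i \in \mathbb{N}_0\}$ is a basis for $\mathcal{O}$. Applying $\sigma$ then yields the claim.

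Concretely, the steps in order are: first, invoke Lemma \ref{lem:subspace U_i}(iv) to write $\mathcal{O}$ as the direct sum $\bigoplus_{i \in \mathbb{N}_0} U_i$. Second, for each $i$, describe a basis of $U_i$: use part (ii) for even $i$ (the pair $a_{2i}, b_{2i}$) and part (i) for odd $i$ (the single vector $a_{2i+1}$). Third, recall the general fact that if $V = \bigoplus_i V_i$ and $B_i$ is a basis for $V_i$, then $\bigcup_i B_i$ is a basis for $V$; applying this gives that the union of all these basis elements is a basis for $\mathcal{O}$. Fourth, apply the Lie algebra isomorphism $\sigma$, which carries bases to bases, to transport this basis to $\mathcal{O}^\sigma$, and note $\sigma(a_{2i}) = a_{2i}^\sigma$, $\sigma(b_{2i}) = b_{2i}^\sigma$, $\sigma(a_{2i+1}) = a_{2i+1}^\sigma$ by definition of the $\sigma$-image notation.

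There is essentially no obstacle here, since the hard work was done in Lemma \ref{lem:subspace U_i}, whose own proof was supplied in this section using the vector-valued formulas of Lemma \ref{lem:G,H U_i}. The only minor point worth a sentence is the bookkeeping that the disjoint union of the per-$U_i$ bases indexes exactly the list $a_{2i}^\sigma, b_{2i}^\sigma, a_{2i+1}^\sigma$ $(i \in \mathbb{N}_0)$ with no repetitions and no omissions — which follows because $b_{2i+1}^\sigma = -a_{2i+1}^\sigma$ (from Lemma \ref{lem:G,H U_i}, comparing \eqref{eq:[x_03,Gk]} and \eqref{eq:[x_12,Hk]}) so the odd-index contributions collapse to the single family $\{a_{2i+1}^\sigma\}$. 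Thus the proof is a short assembly: cite Lemma \ref{lem:subspace U_i}, take the union of bases across the direct summands, and push through $\sigma$.
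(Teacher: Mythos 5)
Your proposal is correct and matches the paper's own argument: the paper likewise combines Lemma \ref{lem:subspace U_i}(i),(ii),(iv) (the direct sum $\mathcal{O}=\sum_i U_i$ with the per-summand bases $a_{2i},b_{2i}$ and $a_{2i+1}$) and transports the resulting basis through the isomorphism $\sigma$, merely phrasing it via the images $U_i^\sigma$. No gaps; your closing remark about the odd-index collapse $b_{2i+1}=-a_{2i+1}$ is exactly the bookkeeping already built into Lemma \ref{lem:subspace U_i}(i).
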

\begin{proof}
Recall the subspaces $\{U_i\}_{i\in \mathbb{N}_0}$ from Definition \ref{def:U_i}.
For each $i\in \mathbb{N}_0$, let $U^\sigma_i$ denote the subspace of $\mathcal{O}^\sigma$ that is the image of $U_i$ under $\sigma$.
By Lemma \ref{lem:subspace U_i}(i), (ii) and Lemma \ref{lem:G,H U_i}, the elements $a_{2i}^\sigma$, $b_{2i}^\sigma$ are a basis for $U_{2i}^\sigma$ and the element $a_{2i+1}^\sigma$ is a basis for $U_{2i+1}^\sigma$. 
Moreover, by \eqref{eq(2):sumUi=O} we have
\begin{equation*}
	\mathcal{O}^\sigma = \sum_{i\in \mathbb{N}_0} U^\sigma_i \qquad (\text{\rm direct sum}).
\end{equation*}
By these comments, the result follows.
\end{proof}

\medskip
We now turn our attention to the Onsager subalgebra $\mathcal{O}$ and present one of our main results.

\begin{theorem}\label{thm:1st basis for O}
The Onsager subalgebra $\mathcal{O}$ has a basis
\begin{equation}\label{eq:1st basis for O}
	a_{2i},\qquad b_{2i}, \qquad  a_{2i+1}, \qquad \qquad i\in \mathbb{N}_0.
\end{equation}
The Lie bracket acts on this basis as follows.
\begin{align}
	[a_{2i}, a_{2j}] & = \begin{cases}
		0 & \qquad \qquad \qquad  \text{if} \qquad i=j=0 \quad \text{or} \quad i\geq1, j\geq1; \label{thm1:eq[a2i,a2j]}\\
		4a_{2j-1} & \qquad \qquad \qquad \text{if} \qquad i=0, j\geq 1,
		\end{cases} \\
	[b_{2i}, a_{2j}] &= \begin{cases}
		a_{2i+1} & \text{if} \qquad i\geq0, j=0;\\
		a_{2j+1} & \text{if} \qquad i=0, j\geq1;\\
		a_{2i+2j+1} - 16 a_{2i+2j-3} & \text{if} \qquad i\geq1, j\geq1,
		\end{cases} \label{thm1:eq[b2i,a2j]}\\
	[b_{2i}, b_{2j}] & = \begin{cases}
		0 & \qquad \qquad \qquad \text{if} \qquad i=j=0 \quad \text{or} \quad i\geq1, j\geq1;\\
		4a_{2i-1} &\qquad \qquad \qquad \text{if} \qquad i\geq 1, j=0,
		\end{cases} \\
	[a_{2i}, a_{2j+1}] & = \begin{cases}
		a_{2j+2} & \quad \text{if} \qquad i=0, j\geq0;\\
		a_{2i+2j+2} + 4b_{2i+2j} &\quad  \text{if} \qquad i\geq1, j\geq0,
		\end{cases} \\
	[a_{2i+1}, b_{2j}] & = \begin{cases}
		b_{2i+2} &\quad  \text{if} \qquad i\geq0, j=0;\\
		b_{2i+2j+2} + 4a_{2i+2j} & \quad \text{if} \qquad i\geq0, j\geq1,
		\end{cases} \\
	[a_{2i+1}, a_{2j+1}] & = 0 \qquad \qquad \qquad \qquad \qquad \text{if} \qquad i\geq0, j\geq0. \label{thm1:eq[a2i+1,a2j+1]}
\end{align}
\end{theorem}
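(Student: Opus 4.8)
The plan is to transport everything to $L(\sl2)^+$ via the isomorphism $\sigma$ of Lemma \ref{def: sigma}, prove the statement there, and pull it back. The basis claim \eqref{eq:1st basis for O} is already essentially Proposition \ref{prop:basisOsigma}: since $\sigma$ is a Lie algebra isomorphism and $a_{2i}^\sigma, b_{2i}^\sigma, a_{2i+1}^\sigma$ ($i\in\mathbb{N}_0$) form a basis for $\mathcal{O}^\sigma$, the preimages $a_{2i}, b_{2i}, a_{2i+1}$ form a basis for $\mathcal{O}$. So the real content is the Lie bracket table \eqref{thm1:eq[a2i,a2j]}--\eqref{thm1:eq[a2i+1,a2j+1]}, and by applying $\sigma$ it suffices to verify each identity among the vector-valued forms given in Lemma \ref{lem:G,H U_i}.

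First I would record, from \eqref{eq:Gk(x_12)}--\eqref{eq:[x_12,Hk]}, the six vector-valued forms in terms of the single ``scalar'' $s:=2t-1$ (and the fact $t=(s+1)/2$, $t-1=(s-1)/2$); in these coordinates
\[
a_{2i}^\sigma = (-4)^i\begin{pmatrix} s^i\\ \tfrac{s+1}{2}s^{i-1}\\ \tfrac{s-1}{2}s^{i-1}\end{pmatrix},\quad
b_{2i}^\sigma = (-4)^i\begin{pmatrix} s^{i-1}\\ \tfrac{s+1}{2}s^{i}\\ \tfrac{s-1}{2}s^{i}\end{pmatrix},\quad
a_{2i+1}^\sigma = 2(-1)^{i+1}4^i\begin{pmatrix} s^i\\ \tfrac{s+1}{2}s^{i}\\ -\tfrac{s-1}{2}s^{i}\end{pmatrix},
\]
with $s^{-1}:=0$. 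Next I would compute each of the six families of brackets using Lemma \ref{v-val comp}: a bracket of two vectors $(f_1,g_1,h_1)^\top$ and $(f_2,g_2,h_2)^\top$ expands bilinearly into the three cases of that lemma, giving a closed formula that is quick to apply here because each of our vectors has a very rigid shape (essentially $(f,g,h)$ with $g,h$ proportional to a common power of $s$, and $f$ one of two powers of $s$). For instance $[a_{2i}, a_{2j}]$ with $i,j\ge 1$: both vectors have second and third components with opposite-sign ``$\pm\tfrac{s\mp1}{2}$'' factors times powers of $s$, and one checks the three Lemma \ref{v-val comp} contributions cancel, giving $0$; the case $i=0$ is separate because $a_0^\sigma=(1,0,0)^\top$ has no $s^{-1}$ issue and yields $4a_{2j-1}^\sigma$ after matching powers of $s$ and the scalar $(-1)^j4^{j}$, $2(-1)^{j}4^{j-1}$ prefactors. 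The other five identities are handled the same way; the two-term right-hand sides like $a_{2i+2j+1}-16a_{2i+2j-3}$ in \eqref{thm1:eq[b2i,a2j]} arise precisely because expanding a product $s^{i}\cdot s^{j}$ against the mixed shapes produces both an $s^{i+j}$ term and, after rewriting $\tfrac{s+1}{2}\cdot\tfrac{s-1}{2}=\tfrac{s^2-1}{4}$, a subleading $s^{i+j-2}$ term — and the constant $16$ is the bookkeeping of the $(-4)^{\bullet}$ and $2$-prefactors across the index shift by $4$.

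The step I expect to be the main obstacle is not any single computation but getting the case splits and the sign/power-of-$4$ bookkeeping exactly right at the boundary $i=0$ or $j=0$, where the convention $s^{-1}=0$ truncates one term and changes which of the two-term formulas degenerates to one term; in particular one must check that the ``$i\ge 1,j\ge 1$'' formula in \eqref{thm1:eq[b2i,a2j]} really does \emph{not} specialize to the $i=0$ or $j=0$ formula (it does not, because $a_{2i+2j-3}$ would need a negative or zero index), so the three cases are genuinely distinct and each must be verified on its own. A clean way to organize this is: (1) prove the two ``pure'' families \eqref{thm1:eq[a2i,a2j]} and the $[b_{2i},b_{2j}]$ family together by symmetry; (2) prove \eqref{thm1:eq[a2i+1,a2j+1]} (both factors have the same rigid shape, so Lemma \ref{v-val comp} gives outright cancellation); (3) prove \eqref{thm1:eq[b2i,a2j]}, and then obtain the two remaining mixed families from it by applying a suitable automorphism or by direct analogous computation. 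Throughout, after each vector identity in $L(\sl2)^+$ one applies $\sigma^{-1}$ to land the stated identity in $\mathcal{O}$, using that $\sigma$ is an isomorphism of Lie algebras.
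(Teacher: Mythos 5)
Your proposal is correct and follows essentially the same route as the paper: transport everything to $L(\sl2)^+$ via $\sigma$, get the basis from the $\sigma$-images $a_{2i}^\sigma, b_{2i}^\sigma, a_{2i+1}^\sigma$ (equivalently Lemma \ref{lem:subspace U_i}), verify each bracket family by expanding the vector-valued forms of Lemma \ref{lem:G,H U_i} with Lemma \ref{v-val comp} (minding the $(2t-1)^{-1}:=0$ boundary cases), and pull back through $\sigma^{-1}$. The only quibble is cosmetic: the two remaining mixed families do not follow from \eqref{thm1:eq[b2i,a2j]} alone via the swap automorphism, but your fallback of direct analogous computation is exactly what the paper does, so nothing is missing.
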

\begin{proof}
By Lemma \ref{lem:subspace U_i}(i), (ii) and (iv), the elements $a_{2i}$, $b_{2i}$, $ a_{2i+1}$ ($i\in \mathbb{N}_0$) form a basis for $\mathcal{O}$.
Next, we will prove \eqref{thm1:eq[a2i,a2j]}--\eqref{thm1:eq[a2i+1,a2j+1]}.
First, we show \eqref{thm1:eq[a2i,a2j]}.
Consider $[a^\sigma_{2i},a^\sigma_{2j}]$.
Evaluate this bracket using \eqref{eq:Gk(x_12)} and simplify the result by applying Lemma \ref{v-val comp}.
The result is zero if $i=j=0$ or $i\geq1$, $j\geq 1$, and it equals $4a^\sigma_{2j-1}$ if $i=0$, $j\geq 1$.
Since $\sigma$ is a Lie algebra isomorphism from $\tet \to L(\sl2)^+$, we obtain \eqref{thm1:eq[a2i,a2j]}.
Similarly, we routinely obtain \eqref{thm1:eq[b2i,a2j]}--\eqref{thm1:eq[a2i+1,a2j+1]}.
\end{proof}

\section{The subspace $X_{ij}$ of $\tet$}\label{Sec:x_ij-like O}

Let $X_{ij}$ be the subspace of $\tet$ as in Definition \ref{del:X_ij}, where $(i,j) \in \{(1,2),(2,3),(3,1)\}$.
Our goal of this section is to determine a basis for $X_{ij}$. 
To achieve this, we will show that the following sum is direct:
\begin{equation*}
	X_{ij} = (X_{ij} \cap \mathcal{O}) + (X_{ij} \cap \mathcal{O}') + (X_{ij} \cap \mathcal{O}'').
\end{equation*}
We also find a basis for each summand.
To do these things, we will work with $\sigma$-image of $X_{ij}$, where $\sigma$ is the Lie algebra isomorphism $\sigma: \tet \to L(\sl2)^+$ from Lemma \ref{def: sigma}.

\begin{lemma}\label{lem:xij-like elts in L(sl2)+}
Recall the basis \eqref{eq:1st basis for O} for $\mathcal{O}$.
For $i\in \mathbb{N}_0$, the following {\rm(i)}--{\rm(iii)} hold. 
\begin{enumerate}[label=(\roman*), font=\normalfont]
\item The element $a_{2i} + 4 b_{2i-2}$ is $x_{12}$-like. 
Moreover, its image under the map $\sigma$ is 
	\begin{equation}\label{eq:x12 in O}
	(-1)^i4^i\begin{pmatrix} (2t-1)^i - (2t-1)^{i-2} \\ 0 \\ 0 \end{pmatrix},
	\end{equation}
	where we interpret $b_{-2}:=0$, $(2t-1)^{-1}:=0$, and $(2t-1)^{-2}:=0$.
\item The element $a_{2i+1} + 2a_{2i}+4a_{2i-1}-2b_{2i}$ is $x_{23}$-like.
Moreover, its image under the map $\sigma$ is
	\begin{equation}\label{eq:x23 in O}
	(-1)^{i+1}4^{i+1} \begin{pmatrix} 0 \\ t(2t-1)^{i} - t(2t-1)^{i-1} \\ 0 \end{pmatrix},
	\end{equation}
	where we interpret $a_{-1}:=0$ and $(2t-1)^{-1}:=0$.
\item The element $a_{2i+1} + 2a_{2i} - 4a_{2i-1} + 2b_{2i}$ is $x_{31}$-like. 
Moreover, its image under the map $\sigma$ is
	\begin{equation}\label{eq:x31 in O}
	(-1)^{i+1}4^{i+1} \begin{pmatrix} 0 \\ 0 \\  (1-t)(2t-1)^i+(1-t)(2t-1)^{i-1}  \end{pmatrix},
	\end{equation}
	where we interpret $a_{-1}:=0$ and $(2t-1)^{-1}:=0$.
\end{enumerate}
\end{lemma}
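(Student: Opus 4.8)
The plan is to verify each of the three parts by pushing everything through the isomorphism $\sigma$ and working in the vector-valued form of $L(\sl2)^+$. For part (i), I would first compute $\sigma(a_{2i}+4b_{2i-2})$ using the formulas \eqref{eq:Gk(x_12)} and \eqref{eq:Hk(x_03)} from Lemma \ref{lem:G,H U_i}. Adding
\[
(-1)^i4^i\begin{pmatrix} (2t-1)^i \\ t(2t-1)^{i-1} \\ (t-1)(2t-1)^{i-1} \end{pmatrix}
\quad\text{and}\quad
4\cdot(-1)^{i-1}4^{i-1}\begin{pmatrix} (2t-1)^{i-3} \\ t(2t-1)^{i-2} \\ (t-1)(2t-1)^{i-2} \end{pmatrix},
\]
the second and third coordinates cancel (each equals $(-1)^i4^i$ times $t(2t-1)^{i-1}$ minus itself, resp. for the third coordinate), leaving only the first coordinate $(-1)^i4^i\big((2t-1)^i-(2t-1)^{i-2}\big)$. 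This gives \eqref{eq:x12 in O}. Then, since the resulting element lies in $x\otimes\mathcal{A}=X^\sigma_{12}$ by Lemma \ref{lem:x_ij-elts L(sl2)+}, Definition \ref{Def:x_ij L(sl2)+} tells us the preimage $a_{2i}+4b_{2i-2}$ is $x_{12}$-like.

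For part (ii), I would similarly compute $\sigma(a_{2i+1}+2a_{2i}+4a_{2i-1}-2b_{2i})$ using \eqref{eq:Gk(x_12)}, \eqref{eq:Hk(x_03)}, and \eqref{eq:[x_03,Gk]}. The goal is to show the first and third coordinates vanish while the second collapses to $(-1)^{i+1}4^{i+1}\big(t(2t-1)^i-t(2t-1)^{i-1}\big)$, so that the image lies in $y\otimes\mathcal{A}=X^\sigma_{23}$; then Lemma \ref{lem:x_ij-elts L(sl2)+} and Definition \ref{Def:x_ij L(sl2)+} give the claim. I expect the combination $2a_{2i}$ to supply the piece that cancels the first coordinate of $a_{2i+1}$, and $4a_{2i-1}-2b_{2i}$ (together with leftover terms) to arrange the cancellation in the third coordinate; the coefficient bookkeeping with the alternating signs $(-1)^i$, $(-1)^{i+1}$ and the powers $4^i$, $4^{i-1}$, $4^{i+1}$ is the only place care is needed. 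Part (iii) is entirely analogous, with $-4a_{2i-1}+2b_{2i}$ replacing $4a_{2i-1}-2b_{2i}$, so that this time the first and second coordinates vanish and the third coordinate survives as $(-1)^{i+1}4^{i+1}\big((1-t)(2t-1)^i+(1-t)(2t-1)^{i-1}\big)$, placing the image in $z\otimes\mathcal{A}=X^\sigma_{31}$.

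The main obstacle is purely computational rather than conceptual: it is the careful tracking of signs, powers of $4$, and the shifted exponents of $2t-1$ when adding four vector-valued expressions, especially handling the boundary conventions $a_{-1}=0$, $b_{-2}=0$, and $(2t-1)^{-1}=(2t-1)^{-2}=0$ at small $i$ (so that the claimed images degenerate correctly at $i=0,1$). Once the arithmetic is done, the logical structure is immediate: membership of the $\sigma$-image in $x\otimes\mathcal{A}$, $y\otimes\mathcal{A}$, or $z\otimes\mathcal{A}$ is exactly membership in $X^\sigma_{12}$, $X^\sigma_{23}$, or $X^\sigma_{31}$ by Lemma \ref{lem:x_ij-elts L(sl2)+}, and Definition \ref{Def:x_ij L(sl2)+} transports this back to $\tet$. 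I would present one part in full and note that the other two follow by the same routine computation (or by applying the automorphism $\prime$ of Lemma \ref{lem:auto L(sl2)+}, after checking how it permutes the standard generators $x_{12},x_{23},x_{31}$).
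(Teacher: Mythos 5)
Your proposal follows the paper's proof essentially verbatim: evaluate the $\sigma$-images with Lemma \ref{lem:G,H U_i}, observe the cancellations in vector-valued form, and then invoke Lemma \ref{lem:x_ij-elts L(sl2)+} (with Definition \ref{Def:x_ij L(sl2)+}) to pull the membership in $X^\sigma_{12}$, $X^\sigma_{23}$, $X^\sigma_{31}$ back to $\tet$; this is exactly what the paper does, and your logical structure is sound. One arithmetic slip: the second vector you display in part (i) is the image of $4b_{2i-4}$, not of $4b_{2i-2}$; since $b_{2i-2}=b_{2(i-1)}$, formula \eqref{eq:Hk(x_03)} gives
\begin{equation*}
4\,b_{2i-2}^{\sigma} \;=\; (-1)^{i-1}4^{i}\begin{pmatrix} (2t-1)^{i-2} \\ t(2t-1)^{i-1} \\ (t-1)(2t-1)^{i-1} \end{pmatrix},
\end{equation*}
and only with these entries do the second and third coordinates cancel against those of $a_{2i}^{\sigma}$ to produce \eqref{eq:x12 in O}; your stated conclusion is the correct one, so this is an off-by-one in the displayed intermediate step rather than a flaw in the method. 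Also, your parenthetical fallback of deducing (ii) and (iii) from (i) via the automorphism $\prime$ does not work: $\prime$ carries $\mathcal{O}$ onto $\mathcal{O}'$, so it sends $a_{2i}+4b_{2i-2}$ to the element $a'_{2i}+4b'_{2i-2}$ of $\mathcal{O}'$ (an $x_{23}$-like element of $\mathcal{O}'$, as in the Appendix), not to the elements of $\mathcal{O}$ appearing in parts (ii), (iii); so each of the three parts requires its own direct computation, as in the paper and in your primary route.
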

\begin{proof}
(i): We first show the second assertion.
Evaluate $\left(a_{2i} + 4 b_{2i-2}\right)^\sigma$ using \eqref{eq:Gk(x_12)}, \eqref{eq:Hk(x_03)} to get the the second assertion:
\begin{align*}
	\left(a_{2i} + 4 b_{2i-2}\right)^\sigma
	& = (-1)^i4^i\begin{pmatrix} (2t-1)^i \\ t(2t-1)^{i-1} \\ (t-1)(2t-1)^{i-1} \end{pmatrix}
		+ (-1)^{i-1}4^i \begin{pmatrix} (2t-1)^{i-2} \\ t(2t-1)^{i-1} \\ (t-1)(2t-1)^{i-1} \end{pmatrix} \\
	& = 	(-1)^{i}4^{i}\begin{pmatrix} (2t-1)^i-(2t-1)^{i-2} \\ 0 \\ 0 \end{pmatrix}.
\end{align*}
Next, by Lemma \ref{lem:x_ij-elts L(sl2)+} the second assertion implies $\left(a_{2i} + 4 b_{2i-2}\right)^\sigma \in X^\sigma_{12}$.
Hence, we showed the first assertion.\\
\noindent
(ii): Evaluate $(a_{2i+1} + 2a_{2i}  + 4a_{2i-1}-2b_{2i})^\sigma$ using \eqref{eq:Gk(x_12)}--\eqref{eq:[x_03,Gk]} to get the second assertion:
\begin{align*}
	(a_{2i+1} + 2a_{2i} & + 4a_{2i-1}-2b_{2i})^\sigma \\
	= & \ 2(-1)^{i+1}4^i \begin{pmatrix}  (2t-1)^i \\ t(2t-1)^{i} \\ (1-t)(2t-1)^{i} \end{pmatrix}
	+ 2(-1)^i 4^i \begin{pmatrix}  (2t-1)^{i} \\ t(2t-1)^{i-1} \\ (t-1)(2t-1)^{i-1} \end{pmatrix} \\
	 & \qquad  +2(-1)^i4^i\begin{pmatrix} (2t-1)^{i-1} \\ t(2t-1)^{i-1} \\ (1-t)(2t-1)^{i-1} \end{pmatrix}
	+ 2(-1)^{i+1}4^i \begin{pmatrix}  (2t-1)^{i-1} \\ t(2t-1)^i \\ (t-1)(2t-1)^{i} \end{pmatrix} \\
	= & \ (-1)^{i+1}4^{i+1} \begin{pmatrix}  0 \\ t(2t-1)^i - t(2t-1)^{i-1} \\ 0 \end{pmatrix}.
\end{align*}
Next, by Lemma \ref{lem:x_ij-elts L(sl2)+} the second assertion implies $(a_{2i+1} + 2a_{2i} + 4a_{2i-1}-2b_{2i})^\sigma \in X^\sigma_{23}$.
Hence, we showed the first assertion.\\
\noindent
(iii): Evaluate $(a_{2i+1} + 2a_{2i} - 4a_{2i-1} + 2b_{2i})^\sigma$ using \eqref{eq:Gk(x_12)}--\eqref{eq:[x_03,Gk]} to get the second assertion:
\begin{align*}
	(a_{2i+1} + 2a_{2i} & - 4a_{2i-1} + 2b_{2i})^\sigma \\
	= & \ 2(-1)^{i+1}4^i \begin{pmatrix}  (2t-1)^i \\ t(2t-1)^{i} \\ (1-t)(2t-1)^{i}  \end{pmatrix}
	+ 2(-1)^i4^i \begin{pmatrix} (2t-1)^{i} \\ t(2t-1)^{i-1} \\ (t-1)(2t-1)^{i-1}  \end{pmatrix} \\
	 & \qquad  +2(-1)^{i+1}4^i \begin{pmatrix} (2t-1)^{i-1} \\ t(2t-1)^{i-1} \\ (1-t)(2t-1)^{i-1}  \end{pmatrix}
	+ 2(-1)^i4^i \begin{pmatrix}  (2t-1)^{i-1} \\ t(2t-1)^i \\ (t-1)(2t-1)^{i}  \end{pmatrix} \\
	= & \ (-1)^{i+1}4^{i+1} \begin{pmatrix} 0 \\ 0 \\ (1-t)(2t-1)^{i} + (1-t)(2t-1)^{i-1}  \end{pmatrix}.
\end{align*}
Next, by Lemma \ref{lem:x_ij-elts L(sl2)+} the second assertion implies $(a_{2i+1} + 2a_{2i} - 4a_{2i-1} + 2b_{2i})^\sigma \in X^\sigma_{31}$.
Hence, we showed the first assertion.
\end{proof}

\begin{definition}\label{Def:x_ij like in O}
Referring to Lemma \ref{lem:xij-like elts in L(sl2)+}, we define
\begin{align}
	\boldsymbol{x}_{i} & = a_{2i} + 4 b_{2i-2},\label{eq:a(i) tet}\\
	\boldsymbol{y}_{i} & = a_{2i+1} + 2a_{2i}+4a_{2i-1}-2b_{2i},\label{eq:b(i) tet}\\
	\boldsymbol{z}_{i} & = a_{2i+1} + 2a_{2i} - 4a_{2i-1} + 2b_{2i},\label{eq:c(i) tet}
\end{align}
for $i\in \mathbb{N}_0$, where $a_{-1}:=0$ and $b_{-2}:=0$.
\end{definition}
By Lemma \ref{lem:subspace U_i}(i),(ii) we observe that 
\begin{equation*}
	\bx_i  \in U_{2i-2} + U_{2i}, \quad 
	\by_{i}  \in U_{2i-1} + U_{2i} + U_{2i+1}, \quad
	\bz_{i}  \in U_{2i-1} + U_{2i} + U_{2i+1}, 
\end{equation*}
where $U_{-1}:=0$, $U_{-2}:=0$.

\begin{notation}\label{notation:x,y,z}
For the rest of this paper, we retain the notation $\bx_i$, $\by_{i}$, $\bz_{i}$ for their $\sigma$-images.
\end{notation}

\begin{lemma}\label{lem:x,y,z F[t]}
The following {\rm{(i)}}--{\rm{(iii)}} hold.
\begin{enumerate}[label=(\roman*), font=\normalfont]
	\item $x \otimes \mathbb{F}[t]$ has a basis $\{\bx_{i} \mid i \in \mathbb{N}_0\}$.
	\item $y \otimes t\mathbb{F}[t]$ has a basis $\{\by_{i} \mid i \in \mathbb{N}_0\}$.
	\item $z \otimes (1-t)\mathbb{F}[t]$ has a basis $\{\bz_{i} \mid i \in \mathbb{N}_0\}$.
\end{enumerate}
\end{lemma}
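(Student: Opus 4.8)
The plan is to work entirely with the $\sigma$-images and use the explicit vector-valued formulas from Lemma \ref{lem:xij-like elts in L(sl2)+}. By Notation \ref{notation:x,y,z} we identify $\bx_i$, $\by_i$, $\bz_i$ with the vectors
\begin{equation*}
	\bx_i = (-1)^i 4^i \begin{pmatrix} (2t-1)^i - (2t-1)^{i-2} \\ 0 \\ 0 \end{pmatrix},
	\quad
	\by_i = (-1)^{i+1}4^{i+1}\begin{pmatrix} 0 \\ t(2t-1)^i - t(2t-1)^{i-1} \\ 0 \end{pmatrix},
\end{equation*}
and the analogous expression for $\bz_i$ in the third coordinate. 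So each of the three statements reduces to a claim about a one-variable polynomial family: for (i) the scalars $p_i(t) := (2t-1)^i - (2t-1)^{i-2}$ ($i \in \mathbb{N}_0$, with the convention $(2t-1)^{-1}=(2t-1)^{-2}=0$) form a basis of $\mathbb{F}[t]$; for (ii) the scalars $q_i(t) := t\bigl((2t-1)^i - (2t-1)^{i-1}\bigr)$ form a basis of $t\mathbb{F}[t]$; for (iii) the scalars $r_i(t) := (1-t)\bigl((2t-1)^i+(2t-1)^{i-1}\bigr)$ form a basis of $(1-t)\mathbb{F}[t]$. The nonzero scalar prefactors $(-1)^i 4^i$ etc.\ do not affect the span, so they can be dropped at the outset.

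First I would handle (i). Since $2t-1$ generates $\mathbb{F}[t]$ as an algebra and the substitution $t \mapsto (s+1)/2$ is invertible, it is equivalent to show that $\{s^i - s^{i-2} : i \in \mathbb{N}_0\}$ is a basis of $\mathbb{F}[s]$, where $s^{-1}=s^{-2}=0$. Concretely these elements are $1,\ s,\ s^2-1,\ s^3-s,\ s^4-s^2,\dots$. The key observation is triangularity: $p_i$ has degree exactly $i$ with leading coefficient $1$, so $\{p_0,\dots,p_n\}$ is related to $\{1,s,\dots,s^n\}$ by a unipotent upper-triangular change of basis; hence each finite initial segment is a basis of $\mathbb{F}[s]_{\le n}$, and the whole family is a basis of $\mathbb{F}[s]$. (Alternatively, one can exhibit the inverse expansion $s^n = p_n + p_{n-2} + p_{n-4} + \cdots$ explicitly, which telescopes.) Part (ii) is the same argument after dividing by the common factor $t$: $q_i/t = (2t-1)^i - (2t-1)^{i-1}$ has degree $i$ with leading term $2^i t^i$, so by the same triangularity $\{q_i/t\}$ is a basis of $\mathbb{F}[t]$, and multiplying by $t$ gives a basis of $t\mathbb{F}[t]$. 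Part (iii) is identical after dividing by $(1-t)$: $r_i/(1-t) = (2t-1)^i + (2t-1)^{i-1}$ again has degree $i$ with leading coefficient $2^i$, hence $\{r_i/(1-t)\}$ is a basis of $\mathbb{F}[t]$ and $\{r_i\}$ a basis of $(1-t)\mathbb{F}[t]$.

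Finally I would transfer these scalar statements back to $L(\sl2)^+$: since the map $\mathbb{F}[t] \to x\otimes\mathbb{F}[t]$, $f \mapsto x\otimes f$ is an $\mathbb{F}$-linear isomorphism (and similarly $f \mapsto y\otimes f$ onto $y\otimes\mathbb{F}[t]$ restricts to an isomorphism $t\mathbb{F}[t] \to y\otimes t\mathbb{F}[t]$, and $f \mapsto z\otimes f$ restricts to $(1-t)\mathbb{F}[t] \to z\otimes(1-t)\mathbb{F}[t]$), a basis of the polynomial space maps to a basis of the corresponding subspace of $L(\sl2)^+$. This yields (i)--(iii). I do not expect any serious obstacle; the only point requiring care is bookkeeping the edge cases $i=0,1$ where the convention $(2t-1)^{-1}=(2t-1)^{-2}=0$ kicks in — one should check directly that $\bx_0 = (1,0,0)^\top$, $\bx_1 \sim (2t-1,0,0)^\top$, and $\by_0, \bz_0$ come out as claimed, so that the degree count "$\deg p_i = i$" is valid starting from $i=0$.
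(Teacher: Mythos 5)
Your proposal is correct and follows essentially the same route as the paper: the paper also reads off from \eqref{eq:x12 in O}--\eqref{eq:x31 in O} that the polynomial component of $\bx_i$ (resp.\ $\by_i/t$, $\bz_i/(1-t)$, up to the nonzero prefactors) has degree exactly $i$ with nonzero leading coefficient, and concludes linear independence and spanning by this triangularity with respect to the monomial basis. Your substitution $s=2t-1$ and the explicit telescoping inverse are just a more detailed write-up of the same degree argument, so there is no substantive difference.
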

\begin{proof}
(i): From \eqref{eq:x12 in O}, $\bx_{i}$ has the form
$$
	 x\otimes \left( (-1)^i8^{i}t^i + \text{(lower-degree terms)}\right).
$$
This implies that the set $\{\bx_{i} \mid i \in \mathbb{N}_0\}$ is linearly independent and spans $x\otimes\mathbb{F}[t]$. Therefore, $\{\bx_{i} \mid i \in \mathbb{N}_0\}$ forms a basis for $x\otimes\mathbb{F}[t]$. \\
(ii): From \eqref{eq:x23 in O}, $\by_{i}$ has the form
$$
	 y\otimes t\left( (-1)^{i+1}4\cdot8^{i}t^i + \text{(lower-degree terms)}\right).
$$
Therefore, $\{\by_{i} \mid i \in \mathbb{N}_0\}$ forms a basis for $y\otimes t\mathbb{F}[t]$. \\
(iii): From \eqref{eq:x31 in O}, $\bz_{i}$ has the form
$$
	 z\otimes (1-t) \left( (-1)^{i+1}4\cdot8^{i}t^i + \text{(lower-degree terms)}\right).
$$
Therefore, $\{\bz_{i} \mid i \in \mathbb{N}_0\}$ forms a basis for $z\otimes (1-t)\mathbb{F}[t]$. 
\end{proof}

\begin{lemma}[cf. {\cite[Corollary 11.3]{2007HarTer}}] \label{lem: decomp O}
The Lie algebra $\mathcal{O}^\sigma$ satisfies
\begin{equation}\label{decomp O sigma}
	\mathcal{O}^\sigma = x\otimes \mathbb{F}[t] + y \otimes t \mathbb{F}[t] + z\otimes(1-t)\mathbb{F}[t] \qquad  (\text{\rm direct sum}).
\end{equation}
\end{lemma}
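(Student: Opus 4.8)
The plan is to deduce the direct-sum decomposition \eqref{decomp O sigma} from the results already established about the $a_i^\sigma$, $b_i^\sigma$ together with the bases found in Lemma \ref{lem:x,y,z F[t]}. First I would record that, by Proposition \ref{prop:basisOsigma}, the elements $a_{2i}^\sigma$, $b_{2i}^\sigma$, $a_{2i+1}^\sigma$ $(i \in \mathbb{N}_0)$ form a basis for $\mathcal{O}^\sigma$; in particular $\mathcal{O}^\sigma$ is spanned by all the $a_i^\sigma$ and $b_i^\sigma$. By Definition \ref{Def:x_ij like in O}, each of $\bx_i$, $\by_i$, $\bz_i$ lies in $\mathcal{O}^\sigma$, and conversely the defining equations \eqref{eq:a(i) tet}--\eqref{eq:c(i) tet} are triangular with respect to the grading by the subspaces $U_i^\sigma$: one checks that the leading $U_{2i}^\sigma$-component of $\bx_i$ is $a_{2i}^\sigma$ and that $\by_i + \bz_i = 2a_{2i+1}^\sigma + 4a_{2i}^\sigma$ while $\by_i - \bz_i = 8a_{2i-1}^\sigma - 4b_{2i}^\sigma$, so that $a_{2i}^\sigma$, $b_{2i}^\sigma$, $a_{2i+1}^\sigma$ can be recovered recursively from $\bx_j$, $\by_j$, $\bz_j$ $(j \le i+1)$. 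Hence $\{\bx_i, \by_i, \bz_i \mid i \in \mathbb{N}_0\}$ is itself a basis for $\mathcal{O}^\sigma$.

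Next I would combine this with Lemma \ref{lem:x,y,z F[t]}: the three families $\{\bx_i\}$, $\{\by_i\}$, $\{\bz_i\}$ are bases for $x \otimes \mathbb{F}[t]$, $y \otimes t\mathbb{F}[t]$, $z \otimes (1-t)\mathbb{F}[t]$ respectively. Since the union of these three families is a basis for $\mathcal{O}^\sigma$, it follows formally that
\[
	\mathcal{O}^\sigma = x\otimes \mathbb{F}[t] + y \otimes t\mathbb{F}[t] + z\otimes(1-t)\mathbb{F}[t]
\]
and that the sum is direct (a vector space that has a basis equal to the disjoint union of bases of three subspaces is their internal direct sum). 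This is the cleanest route and avoids any new computation beyond what Lemma \ref{lem:xij-like elts in L(sl2)+} and Lemma \ref{lem:x,y,z F[t]} already supply.

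An alternative, perhaps even shorter, route is to observe directly from Lemma \ref{def: sigma} that $\mathcal{O}^\sigma$ is generated by $x\otimes 1$ and $y\otimes t + z\otimes(t-1)$, note that the right-hand side of \eqref{decomp O sigma} is closed under the Lie bracket (using \eqref{eq:[x,y]} and \eqref{def:eq[u,v]x(ab)}, bracketing monomials $x\otimes t^a$, $y\otimes t^b$, $z\otimes (1-t)t^c$ lands back in the same three summands after expanding $(t-1) = -(1-t)$), and contains both generators, hence contains $\mathcal{O}^\sigma$; the reverse containment and directness then follow from Lemma \ref{lem:x,y,z F[t]} and the fact that the three sums $x\otimes\mathbb{F}[t]$, $y\otimes t\mathbb{F}[t]$, $z\otimes(1-t)\mathbb{F}[t]$ have pairwise-independent supports on the $x,y,z$ components. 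I expect the only mild obstacle is bookkeeping: verifying that the spanning families from Lemma \ref{lem:x,y,z F[t]} assemble into a basis of all of $\mathcal{O}^\sigma$ rather than a proper subspace, which is exactly what the triangular change of basis between $\{\bx_i,\by_i,\bz_i\}$ and $\{a_{2i}^\sigma, b_{2i}^\sigma, a_{2i+1}^\sigma\}$ guarantees. I would therefore present the first route, citing \cite[Corollary 11.3]{2007HarTer} as the origin of the statement and Proposition \ref{prop:basisOsigma} together with Lemma \ref{lem:x,y,z F[t]} for the self-contained proof.
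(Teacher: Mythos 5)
Your proposal is correct and follows essentially the same route as the paper: both arguments establish the two inclusions between $\mathcal{O}^\sigma$ and $x\otimes \mathbb{F}[t] + y\otimes t\mathbb{F}[t] + z\otimes(1-t)\mathbb{F}[t]$ using Lemma \ref{lem:x,y,z F[t]} together with the fact that $\bx_i,\by_i,\bz_i\in\mathcal{O}^\sigma$, and then obtain directness from Lemma \ref{lem:x,y,z F[t]} (equivalently, from the disjoint $x$, $y$, $z$ tensor slots). The only difference is in the inclusion $\mathcal{O}^\sigma\subseteq x\otimes \mathbb{F}[t] + y\otimes t\mathbb{F}[t] + z\otimes(1-t)\mathbb{F}[t]$: the paper reads off membership of each $a^\sigma_{2i}$, $b^\sigma_{2i}$, $a^\sigma_{2i+1}$ directly from the explicit vector forms in Lemma \ref{lem:G,H U_i}, whereas you invert the defining relations \eqref{eq:a(i) tet}--\eqref{eq:c(i) tet} (your identities $\by_i+\bz_i=2a^\sigma_{2i+1}+4a^\sigma_{2i}$ and $\by_i-\bz_i=8a^\sigma_{2i-1}-4b^\sigma_{2i}$ are correct and the recursion does recover the basis of Proposition \ref{prop:basisOsigma}), in effect proving Proposition \ref{prop:Osigma} ahead of time --- a harmless, non-circular variant since you never cite that later proposition.
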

\begin{proof}
Let $\mathcal{L}$ denote the right-hand side of \eqref{decomp O sigma}.
We show that $\mathcal{O}^\sigma = \mathcal{L}$.
By Lemma \ref{lem:G,H U_i}, it readily follows that $\mathcal{O}^\sigma \subseteq \mathcal{L}$.
To get the reverse inclusion, according to Lemma \ref{lem:x,y,z F[t]} we have $\mathcal{L} = \operatorname{Span}\{\bx_{i},\by_{i},\bz_{i} \mid i\in \mathbb{N}_0\}$.
Using \eqref{eq:a(i) tet}--\eqref{eq:c(i) tet} and considering the vectors $a^\sigma_{2i}$, $b^\sigma_{2i}$, $a^\sigma_{2i+1}$ $(i\in \mathbb{N}_0)$ that form a basis for $\mathcal{O}^\sigma$, it follows that $\mathcal{O}^\sigma \supseteq \mathcal{L}$.
Thus, we have shown $\mathcal{O}^\sigma=\mathcal{L}$.
By Lemma \ref{lem:x,y,z F[t]}, the sum \eqref{decomp O sigma} is direct.
\end{proof}

\begin{proposition}\label{prop:Osigma}
The Lie algebra $\mathcal{O}^\sigma$ has a basis
\begin{equation*}
	\bx_{i}, \qquad \by_{i}, \qquad \bz_{i}, \qquad \qquad i\in \mathbb{N}_0.
\end{equation*}
\end{proposition}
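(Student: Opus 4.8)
The plan is to deduce Proposition~\ref{prop:Osigma} directly from the decomposition in Lemma~\ref{lem: decomp O} together with the basis statements in Lemma~\ref{lem:x,y,z F[t]}. Concretely, Lemma~\ref{lem: decomp O} says that $\mathcal{O}^\sigma$ is the \emph{direct} sum of the three subspaces $x\otimes\mathbb{F}[t]$, $y\otimes t\mathbb{F}[t]$, and $z\otimes(1-t)\mathbb{F}[t]$. Lemma~\ref{lem:x,y,z F[t]}(i)--(iii) supplies a basis for each of these three summands: $\{\bx_i\}_{i\in\mathbb{N}_0}$, $\{\by_i\}_{i\in\mathbb{N}_0}$, and $\{\bz_i\}_{i\in\mathbb{N}_0}$, respectively. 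A standard fact about direct sums of vector spaces is that the union of bases of the summands is a basis of the whole space; applying this fact yields that
\[
	\{\bx_i \mid i\in\mathbb{N}_0\} \cup \{\by_i \mid i\in\mathbb{N}_0\} \cup \{\bz_i \mid i\in\mathbb{N}_0\}
\]
is a basis for $\mathcal{O}^\sigma$, which is exactly the claim (recalling Notation~\ref{notation:x,y,z}, under which $\bx_i,\by_i,\bz_i$ denote their own $\sigma$-images).

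The steps, in order, would be: (1) invoke Lemma~\ref{lem: decomp O} to write $\mathcal{O}^\sigma = (x\otimes\mathbb{F}[t]) \oplus (y\otimes t\mathbb{F}[t]) \oplus (z\otimes(1-t)\mathbb{F}[t])$; (2) invoke Lemma~\ref{lem:x,y,z F[t]} to name bases of the three summands; (3) conclude, by the elementary principle that a union of bases of the parts of a vector-space direct sum is a basis of the sum, that $\{\bx_i,\by_i,\bz_i \mid i\in\mathbb{N}_0\}$ is a basis of $\mathcal{O}^\sigma$. One may spell out step~(3) briefly: spanning follows because every element of $\mathcal{O}^\sigma$ decomposes as a sum of one element from each summand, each expressible in the corresponding basis; linear independence follows because a vanishing linear combination, grouped into the three summands, must vanish summand-by-summand by directness, and then coefficient-by-coefficient by the independence within each summand.

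There is essentially no obstacle here — the proposition is a bookkeeping corollary of the two preceding lemmas, and the real content (computing the leading terms of $\bx_i,\by_i,\bz_i$ and establishing the triangular/direct-sum structure) has already been carried out in Lemmas~\ref{lem:xij-like elts in L(sl2)+}--\ref{lem: decomp O}. The only thing to be mildly careful about is the indexing conventions $a_{-1}:=0$, $b_{-2}:=0$, $(2t-1)^{-1}:=(2t-1)^{-2}:=0$ from Definition~\ref{Def:x_ij like in O} and Lemma~\ref{lem:xij-like elts in L(sl2)+}, so that the $i=0$ and $i=1$ cases of $\bx_i,\by_i,\bz_i$ are read correctly; but these are already baked into Lemma~\ref{lem:x,y,z F[t]}, so nothing new needs to be checked. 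Thus the proof is a one- or two-line argument citing Lemmas~\ref{lem:x,y,z F[t]} and~\ref{lem: decomp O}.
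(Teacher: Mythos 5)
Your proposal is correct and is exactly the paper's argument: the paper's proof of Proposition \ref{prop:Osigma} simply cites Lemmas \ref{lem:x,y,z F[t]} and \ref{lem: decomp O}, i.e., the direct-sum decomposition of $\mathcal{O}^\sigma$ plus bases of the three summands. Your spelled-out step (3) is just the routine justification the paper leaves implicit.
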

\begin{proof}
By Lemmas \ref{lem:x,y,z F[t]} and \ref{lem: decomp O}, the result follows.
\end{proof}

\medskip
Recall the subspace $X^\sigma_{ij}$ of $L(\sl2)^+$.
Using \eqref{def:eq[u,v]x(ab)} and Lemma \ref{lem:L_ij like}, we see that $X_{ij}^\sigma$ forms a subalgebra of $L(\sl2)^+$.
Thus, $X^\sigma_{ij}\cap\mathcal{O}^\sigma$ also forms a subalgebra of $L(\sl2)^+$.

\begin{lemma}\label{lem:Lij cap O}
We have
\begin{equation}\label{eq:Xij sigma}
	X^\sigma_{12}\cap \mathcal{O}^\sigma = x\otimes \mathbb{F}[t], \qquad
	X^\sigma_{23}\cap \mathcal{O}^\sigma = y\otimes t\mathbb{F}[t], \qquad
	X^\sigma_{31}\cap \mathcal{O}^\sigma = z\otimes (1-t)\mathbb{F}[t].
\end{equation}
Moreover, the following {\rm{(i)}}--{\rm{(iii)}} hold.
\begin{enumerate}[label=(\roman*), font=\normalfont]
	\item $X_{12}^\sigma\cap\mathcal{O}^\sigma$ has a basis $\{\bx_{i} \mid i \in \mathbb{N}_0\}$.
	\item $X_{23}^\sigma\cap\mathcal{O}^\sigma$ has a basis $\{\by_{i} \mid i \in \mathbb{N}_0\}$.
	\item $X_{31}^\sigma\cap\mathcal{O}^\sigma$ has a basis $\{\bz_{i} \mid i \in \mathbb{N}_0\}$.
\end{enumerate}
\end{lemma}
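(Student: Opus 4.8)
The plan is to prove the displayed equations \eqref{eq:Xij sigma} first, and then read off statements (i)--(iii) as an immediate consequence together with Lemma \ref{lem:x,y,z F[t]}. By the $S_4$/triality symmetry (the automorphism $\prime$ of Lemma \ref{lem:auto L(sl2)+}, which permutes the three standard generators cyclically and sends $t \mapsto t'$), it suffices to establish the first of the three equations, $X^\sigma_{12}\cap\mathcal{O}^\sigma = x\otimes\mathbb{F}[t]$; the other two follow by applying $\prime$ and $\prime\prime$ and using the known images of $X^\sigma_{12}$, $\mathcal{O}^\sigma$ under these automorphisms.

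For the equation $X^\sigma_{12}\cap\mathcal{O}^\sigma = x\otimes\mathbb{F}[t]$, I would argue by double inclusion. The inclusion $\supseteq$ is easy: by Lemma \ref{lem: decomp O} we have $x\otimes\mathbb{F}[t] \subseteq \mathcal{O}^\sigma$, and by Lemma \ref{lem:x_ij-elts L(sl2)+} (specifically $X^\sigma_{12} = x\otimes\mathcal{A}$) we have $x\otimes\mathbb{F}[t]\subseteq x\otimes\mathcal{A} = X^\sigma_{12}$. For the inclusion $\subseteq$, take $\theta \in X^\sigma_{12}\cap\mathcal{O}^\sigma$. Since $\theta\in X^\sigma_{12} = x\otimes\mathcal{A}$, write $\theta = x\otimes f$ with $f\in\mathcal{A}$. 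Since $\theta\in\mathcal{O}^\sigma$, Lemma \ref{lem: decomp O} gives a (unique, since the sum is direct) decomposition $\theta = x\otimes f_1 + y\otimes t g + z\otimes (1-t)h$ with $f_1\in\mathbb{F}[t]$, $g,h\in\mathbb{F}[t]$. Comparing the two expressions for $\theta$ against the direct sum decomposition \eqref{ds:L(sl2)+} of $\L$ (the $x,y,z$ components are independent), we get $y\otimes tg = 0$, $z\otimes(1-t)h = 0$, hence $g=h=0$ and $f = f_1\in\mathbb{F}[t]$. Therefore $\theta\in x\otimes\mathbb{F}[t]$, as desired.

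Once \eqref{eq:Xij sigma} is established, parts (i)--(iii) follow at once: by Lemma \ref{lem:x,y,z F[t]}(i), $\{\bx_i\mid i\in\mathbb{N}_0\}$ is a basis for $x\otimes\mathbb{F}[t] = X^\sigma_{12}\cap\mathcal{O}^\sigma$, and similarly for $\{\by_i\}$ and $\{\bz_i\}$ using parts (ii) and (iii) of that lemma.

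I do not anticipate a serious obstacle here; the main point is simply to be careful that the intersection $X^\sigma_{12}\cap\mathcal{O}^\sigma$ really collapses to the single summand $x\otimes\mathbb{F}[t]$ rather than something larger, and this is forced by the directness of the big decomposition \eqref{ds:L(sl2)+}: an element of $x\otimes\mathcal{A}$ lying in $\mathcal{O}^\sigma$ cannot have any $y$- or $z$-component, and its $x$-component must be a genuine polynomial in $t$ (no negative powers of $t$ or $t-1$), because the $x$-part of $\mathcal{O}^\sigma$ is exactly $x\otimes\mathbb{F}[t]$. The only thing to watch is invoking the triality reduction correctly, i.e. verifying that $\prime$ carries $X^\sigma_{12}\cap\mathcal{O}^\sigma$ to $X^\sigma_{23}\cap(\mathcal{O}^\sigma)'$ and identifying $(\mathcal{O}^\sigma)'$ appropriately; but since the present lemma is stated only for $\mathcal{O}$ (not $\mathcal{O}'$, $\mathcal{O}''$), it is cleanest to just run the same two-inclusion argument three times, invoking Lemma \ref{lem:x_ij-elts L(sl2)+} and Lemma \ref{lem: decomp O} in each case.
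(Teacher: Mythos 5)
Your argument is correct and uses exactly the ingredients the paper does: Lemma \ref{lem:x_ij-elts L(sl2)+} for $X^\sigma_{ij}$, the decomposition \eqref{decomp O sigma} of $\mathcal{O}^\sigma$ for the intersection, and Lemma \ref{lem:x,y,z F[t]} for the bases in (i)--(iii); you simply spell out the double inclusion that the paper leaves implicit. No issues.
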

\begin{proof}
Line \eqref{eq:Xij sigma} follows from Lemma \ref{lem:x_ij-elts L(sl2)+} and \eqref{decomp O sigma}.
Apply Lemma \ref{lem:x,y,z F[t]} to \eqref{eq:Xij sigma} to obtain (i)--(iii).
\end{proof}

\noindent
Recall the automorphism $\prime$ of $\L$ from Lemma \ref{lem:auto L(sl2)+}.
Consider the elements $\bx'_i,\by'_{i},\bz'_{i}$ $(i\in \mathbb{N}_0)$ in $\mathcal{O}'$ and $\bx''_i,\by''_{i},\bz''_{i}$ $(i\in \mathbb{N}_0)$ in $\mathcal{O}''$, as presented in Lemma \ref{B.lem:bxbybz}.

\begin{lemma}\label{lem:X_12, O,O',O''} 
We have
\begin{equation}\label{eq:X12 sigma + O,O',O''}
	X^\sigma_{12}\cap \mathcal{O}^\sigma = x\otimes \mathbb{F}[t], \qquad
	X^\sigma_{12}\cap \mathcal{O}^{\prime\sigma} = x\otimes (1-t')\mathbb{F}[t'], \qquad
	X^\sigma_{12}\cap \mathcal{O}^{\prime\prime\sigma} = x\otimes t''\mathbb{F}[t''].
\end{equation}
Moreover, the following {\rm{(i)}}--{\rm{(iii)}} hold.
\begin{enumerate}[label=(\roman*), font=\normalfont]
	\item $X_{12}^\sigma\cap\mathcal{O}^\sigma$ has a basis $\{\bx_{i} \mid i \in \mathbb{N}_0\}$.
	\item $X_{12}^\sigma\cap\mathcal{O}^{\prime\sigma}$ has a basis $\{\bz'_{i} \mid i \in \mathbb{N}_0\}$.
	\item $X_{12}^\sigma\cap\mathcal{O}^{\prime\prime\sigma}$ has a basis $\{\by''_{i} \mid i \in \mathbb{N}_0\}$.
\end{enumerate}
\end{lemma}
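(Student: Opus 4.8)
The plan is to imitate the proof of Lemma \ref{lem:Lij cap O}, but now using all three Onsager subalgebras $\mathcal{O}$, $\mathcal{O}'$, $\mathcal{O}''$ simultaneously. First I would establish the displayed equalities \eqref{eq:X12 sigma + O,O',O''}. The first one, $X^\sigma_{12}\cap\mathcal{O}^\sigma = x\otimes\mathbb{F}[t]$, is exactly the first equation of \eqref{eq:Xij sigma} in Lemma \ref{lem:Lij cap O}, so nothing new is needed there. For the other two, I would apply the order-three automorphism $\prime$ of $L(\sl2)^+$ from Lemma \ref{lem:auto L(sl2)+} to this known identity. By Remark \ref{rmk:O', O''} (together with the commuting square in Lemma \ref{lem:auto L(sl2)+}), $\prime$ carries $\mathcal{O}^\sigma$ to $\mathcal{O}'^\sigma$ and $\mathcal{O}'^\sigma$ to $\mathcal{O}''^\sigma$. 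It also permutes the subspaces $X^\sigma_{ij}$: since $\prime$ corresponds to the permutation $(123)$ on $\tet$, which sends $x_{12}\mapsto x_{23}\mapsto x_{31}\mapsto x_{12}$, the inverse cycle is what fixes indices $12$; more carefully, I would read off from Lemma \ref{lem:x_ij-elts L(sl2)+} that $\prime$ sends $x\otimes\mathcal{A}$ to $y\otimes\mathcal{A}$ to $z\otimes\mathcal{A}$, hence $X^\sigma_{12}\mapsto X^\sigma_{23}\mapsto X^\sigma_{31}\mapsto X^\sigma_{12}$. So applying $\prime$ to $X^\sigma_{23}\cap\mathcal{O}^\sigma = y\otimes t\mathbb{F}[t]$ and to $X^\sigma_{31}\cap\mathcal{O}^\sigma = z\otimes(1-t)\mathbb{F}[t]$ from \eqref{eq:Xij sigma}, and using $(u\otimes a)' = u'\otimes a'$ with $t' = 1-t^{-1}$, should yield $X^\sigma_{12}\cap\mathcal{O}'^\sigma = x\otimes(1-t')\mathbb{F}[t']$ and $X^\sigma_{12}\cap\mathcal{O}''^\sigma = x\otimes t''\mathbb{F}[t'']$, matching the decomposition \eqref{eq:X12-DS}.

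Once \eqref{eq:X12 sigma + O,O',O''} is in hand, parts (i)--(iii) follow from the basis statements already proved. Part (i) is immediate from Lemma \ref{lem:Lij cap O}(i) (equivalently Lemma \ref{lem:x,y,z F[t]}(i)). For parts (ii) and (iii) I would invoke Lemma \ref{B.lem:bxbybz}, which (per the sentence preceding the statement) provides the elements $\bx'_i,\by'_i,\bz'_i$ in $\mathcal{O}'$ and $\bx''_i,\by''_i,\bz''_i$ in $\mathcal{O}''$ as the $\prime$- and $\prime\prime$-images of $\bx_i,\by_i,\bz_i$. Under $\prime$, the roles cycle: $\bx_i$ (which lives in $X_{12}\cap\mathcal{O}$) maps to something in $X_{23}\cap\mathcal{O}'$, $\by_i\mapsto X_{31}\cap\mathcal{O}'$-type, and $\bz_i\mapsto X_{12}\cap\mathcal{O}'$-type; so it is $\bz'_i$ that forms a basis for $X^\sigma_{12}\cap\mathcal{O}'^\sigma$. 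Similarly, two applications of $\prime$ show $\by''_i$ forms a basis for $X^\sigma_{12}\cap\mathcal{O}''^\sigma$. Concretely: apply $\prime$ to Lemma \ref{lem:x,y,z F[t]}(iii), which says $\{\bz_i\}$ is a basis for $z\otimes(1-t)\mathbb{F}[t] = X^\sigma_{31}\cap\mathcal{O}^\sigma$, to get that $\{\bz'_i\}$ is a basis for $x\otimes(1-t')\mathbb{F}[t'] = X^\sigma_{12}\cap\mathcal{O}'^\sigma$; and apply $\prime$ to Lemma \ref{lem:x,y,z F[t]}(ii) twice, which says $\{\by_i\}$ is a basis for $y\otimes t\mathbb{F}[t] = X^\sigma_{23}\cap\mathcal{O}^\sigma$, to get $\{\by''_i\}$ a basis for $x\otimes t''\mathbb{F}[t''] = X^\sigma_{12}\cap\mathcal{O}''^\sigma$.

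I expect the main obstacle to be purely bookkeeping: getting the cyclic correspondence between the indices $\{12,23,31\}$, the equitable basis elements $\{x,y,z\}$, the Onsager subalgebras $\{\mathcal{O},\mathcal{O}',\mathcal{O}''\}$, and the vector families $\{\bx,\by,\bz\}$ all lined up consistently under repeated application of $\prime$. The permutation $(123)$ sends $x_{12}\mapsto x_{23}$, so the subspace that is fixed-index under $\prime$ requires care about whether one applies $\prime$ or $\prime\prime = (\prime)^2 = (132)$; I would pin this down once and for all using the explicit action $x'_{01}=x_{02}$, etc., recorded after the statement of the $S_4$ action, and the explicit formula $x'=y$, $y'=z$, $z'=x$ on the equitable basis from \eqref{auto_sl2}. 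No genuinely new computation is required — everything reduces, via the automorphism $\prime$, to results (Lemmas \ref{lem:x_ij-elts L(sl2)+}, \ref{lem:x,y,z F[t]}, \ref{lem:Lij cap O}, \ref{B.lem:bxbybz}) already established.
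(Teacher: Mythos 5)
Your proposal is correct and follows essentially the same route as the paper: the paper also obtains the second and third equalities in \eqref{eq:X12 sigma + O,O',O''}, together with (ii) and (iii), by applying $\prime$ to the third equation of \eqref{eq:Xij sigma} and Lemma \ref{lem:Lij cap O}(iii), and $\prime\prime$ to the second equation and Lemma \ref{lem:Lij cap O}(ii). The only imprecision is the first-paragraph phrasing suggesting a single application of $\prime$ to the $X^\sigma_{23}$ identity yields the $\mathcal{O}^{\prime\prime\sigma}$ statement (it needs $\prime\prime$), but your second paragraph resolves exactly this bookkeeping correctly, matching the paper.
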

\begin{proof}
Applying the map $\prime$ to the third equation in \eqref{eq:Xij sigma} and Lemma \ref{lem:Lij cap O}(iii), we obtain the second in \eqref{eq:X12 sigma + O,O',O''} and (ii).
Similarly, applying the map $\prime\prime$ to the second equation in \eqref{eq:Xij sigma} and Lemma \ref{lem:Lij cap O}(ii), we obtain the third in \eqref{eq:X12 sigma + O,O',O''} and (iii).
The result follows.
\end{proof}
\begin{remark}
Applying the maps $\prime$ and $\prime\prime$ to Lemma \ref{lem:X_12, O,O',O''} yields similar results for $X^\sigma_{23}$ and $X^\sigma_{31}$.
\end{remark}
\begin{lemma}\label{lem:Osigma DS}
The following sums are direct:
\begin{align}
	\mathcal{O}^{\sigma} 
	&= (X^\sigma_{12} \cap \mathcal{O}^{\sigma}) + (X^\sigma_{23} \cap \mathcal{O}^{\sigma}) + (X^\sigma_{31} \cap \mathcal{O}^{\sigma}), \label{eq(1):O sigma DS}\\
	\mathcal{O}^{\prime\sigma} 
	&= (X^\sigma_{12} \cap \mathcal{O}^{\prime\sigma}) + (X^\sigma_{23} \cap \mathcal{O}^{\prime\sigma}) + (X^\sigma_{31} \cap \mathcal{O}^{\prime\sigma}), \label{eq(2):O sigma DS}\\
	\mathcal{O}^{\prime\prime\sigma} 
	&= (X^\sigma_{12} \cap \mathcal{O}^{\prime\prime\sigma}) + (X^\sigma_{23} \cap \mathcal{O}^{\prime\prime\sigma}) + (X^\sigma_{31} \cap \mathcal{O}^{\prime\prime\sigma}).\label{eq(3):O sigma DS}
\end{align}
\end{lemma}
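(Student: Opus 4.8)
The plan is to deduce all three decompositions from the single case \eqref{eq(1):O sigma DS} by exploiting the order-three automorphism $\prime$ of $L(\sl2)^+$ from Lemma \ref{lem:auto L(sl2)+}.

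First I would prove \eqref{eq(1):O sigma DS}. By Lemma \ref{lem:Lij cap O} the three subspaces $X^\sigma_{12}\cap\mathcal{O}^\sigma$, $X^\sigma_{23}\cap\mathcal{O}^\sigma$, $X^\sigma_{31}\cap\mathcal{O}^\sigma$ are precisely $x\otimes\mathbb{F}[t]$, $y\otimes t\mathbb{F}[t]$, $z\otimes(1-t)\mathbb{F}[t]$. By Lemma \ref{lem: decomp O} the sum of these three subspaces is direct and equals $\mathcal{O}^\sigma$, which is exactly \eqref{eq(1):O sigma DS}.

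Next I would obtain \eqref{eq(2):O sigma DS} and \eqref{eq(3):O sigma DS} by applying $\prime$ and $\prime\prime$, respectively, to \eqref{eq(1):O sigma DS}. Since $\prime$ is an $\mathbb{F}$-linear bijection, it sends a direct sum to a direct sum and commutes with intersection of subspaces, so it suffices to know how $\prime$ acts on the pieces appearing in \eqref{eq(1):O sigma DS}. Two facts are needed: (a) $\prime$ cyclically permutes $X^\sigma_{12}, X^\sigma_{23}, X^\sigma_{31}$, i.e.\ $\prime(X^\sigma_{12})=X^\sigma_{23}$, $\prime(X^\sigma_{23})=X^\sigma_{31}$, $\prime(X^\sigma_{31})=X^\sigma_{12}$, which is immediate from the explicit descriptions in Lemma \ref{lem:x_ij-elts L(sl2)+} together with $x'=y$, $y'=z$, $z'=x$ in \eqref{auto_sl2} (and the fact that the automorphism $\prime$ of $\mathcal{A}$ fixes $\mathcal{A}$ setwise); (b) $\prime(\mathcal{O}^\sigma)=\mathcal{O}^{\prime\sigma}$ and $\prime\prime(\mathcal{O}^\sigma)=\mathcal{O}^{\prime\prime\sigma}$, which follows from the description of $\mathcal{O}'$, $\mathcal{O}''$ in Remark \ref{rmk:O', O''} and the commuting square in Lemma \ref{lem:auto L(sl2)+}. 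Granting (a) and (b), applying $\prime$ to \eqref{eq(1):O sigma DS} yields a direct sum decomposition of $\mathcal{O}^{\prime\sigma}$ into $X^\sigma_{23}\cap\mathcal{O}^{\prime\sigma}$, $X^\sigma_{31}\cap\mathcal{O}^{\prime\sigma}$, $X^\sigma_{12}\cap\mathcal{O}^{\prime\sigma}$; reordering the summands gives \eqref{eq(2):O sigma DS}. Applying $\prime\prime$ likewise gives \eqref{eq(3):O sigma DS}.

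I do not anticipate a real obstacle here: the only point requiring any care is the cosmetic bookkeeping of the cyclic relabeling of indices so that the three terms appear in the stated order. If one prefers to avoid the automorphism altogether, one can instead mimic the proof of \eqref{eq(1):O sigma DS} using Lemma \ref{lem:X_12, O,O',O''} and its analogues for $X^\sigma_{23}$, $X^\sigma_{31}$, together with the images of \eqref{decomp O sigma} under $\prime$ and $\prime\prime$; this is more computational but equally routine.
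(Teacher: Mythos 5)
Your proposal is correct and follows essentially the same route as the paper: establish \eqref{eq(1):O sigma DS} by combining the identifications in \eqref{eq:Xij sigma} with the direct sum \eqref{decomp O sigma}, then transport it to $\mathcal{O}^{\prime\sigma}$ and $\mathcal{O}^{\prime\prime\sigma}$ by applying the automorphisms $\prime$ and $\prime\prime$. The extra justifications you supply (that $\prime$ cyclically permutes the $X^\sigma_{ij}$ and carries $\mathcal{O}^\sigma$ to $\mathcal{O}^{\prime\sigma}$ via the commuting square in Lemma \ref{lem:auto L(sl2)+}) are exactly the points the paper leaves implicit.
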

\begin{proof}
Equation \eqref{eq(1):O sigma DS} follows from \eqref{decomp O sigma} and \eqref{eq:Xij sigma}.
Apply the maps $\prime$ and $\prime\prime$ to \eqref{eq(1):O sigma DS} to get equations \eqref{eq(2):O sigma DS} and \eqref{eq(3):O sigma DS}, respectively.
\end{proof}

\begin{lemma}\label{lem:X_ij DS}
The following sums are direct:
\begin{align}
	X^\sigma_{12} & = (X^\sigma_{12} \cap \mathcal{O}^\sigma) + (X^\sigma_{12} \cap \mathcal{O}^{\prime\sigma}) + (X^\sigma_{12} \cap \mathcal{O}^{\prime\prime\sigma}), \label{eq: X12 DS}\\
	X^\sigma_{23} & = (X^\sigma_{23} \cap \mathcal{O}^\sigma) + (X^\sigma_{23} \cap \mathcal{O}^{\prime\sigma}) + (X^\sigma_{23} \cap \mathcal{O}^{\prime\prime\sigma}), \label{eq: X23 DS}\\
	X^\sigma_{31} & = (X^\sigma_{31} \cap \mathcal{O}^\sigma) + (X^\sigma_{31} \cap \mathcal{O}^{\prime\sigma}) + (X^\sigma_{31} \cap \mathcal{O}^{\prime\prime\sigma}). \label{eq: X31 DS}	
\end{align}
\end{lemma}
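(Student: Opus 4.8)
The plan is to assemble the statement from pieces that are already in hand, with no new computation. For \eqref{eq: X12 DS}, recall that the Corollary containing \eqref{eq:X12-DS} asserts that
\[
X^\sigma_{12} = x\otimes \mathbb{F}[t] \ + \ x\otimes(1-t')\mathbb{F}[t'] \ + \ x\otimes t''\mathbb{F}[t'']
\]
is a \emph{direct} sum, while line \eqref{eq:X12 sigma + O,O',O''} of Lemma \ref{lem:X_12, O,O',O''} identifies these three summands with $X^\sigma_{12}\cap\mathcal{O}^\sigma$, $X^\sigma_{12}\cap\mathcal{O}^{\prime\sigma}$, and $X^\sigma_{12}\cap\mathcal{O}^{\prime\prime\sigma}$, respectively. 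Substituting the second set of descriptions into the first yields \eqref{eq: X12 DS}, and the sum there is direct precisely because the sum in \eqref{eq:X12-DS} is.

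For \eqref{eq: X23 DS} and \eqref{eq: X31 DS} I would argue in the same way, now using the Corollary lines \eqref{eq:X23-DS} and \eqref{eq:X31-DS} together with the $\prime$- and $\prime\prime$-images of Lemma \ref{lem:X_12, O,O',O''} (recorded in the Remark following it), which identify $X^\sigma_{23}\cap\mathcal{O}^\sigma$, $X^\sigma_{23}\cap\mathcal{O}^{\prime\sigma}$, $X^\sigma_{23}\cap\mathcal{O}^{\prime\prime\sigma}$ with the three summands of \eqref{eq:X23-DS}, and likewise $X^\sigma_{31}\cap\mathcal{O}^\sigma$, $X^\sigma_{31}\cap\mathcal{O}^{\prime\sigma}$, $X^\sigma_{31}\cap\mathcal{O}^{\prime\prime\sigma}$ with the summands of \eqref{eq:X31-DS}. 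Alternatively, since the automorphism $\prime$ of $\L$ from Lemma \ref{lem:auto L(sl2)+} sends $X^\sigma_{12}\mapsto X^\sigma_{23}\mapsto X^\sigma_{31}$ and $\mathcal{O}^\sigma\mapsto\mathcal{O}^{\prime\sigma}\mapsto\mathcal{O}^{\prime\prime\sigma}$, one may simply apply $\prime$ and $\prime\prime$ to \eqref{eq: X12 DS} and use that an automorphism carries direct sums to direct sums; this produces \eqref{eq: X23 DS} and \eqref{eq: X31 DS} after permuting the order of the summands.

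Since every ingredient — the directness of \eqref{eq:X12-DS}--\eqref{eq:X31-DS} and the intersection formulas of Lemma \ref{lem:X_12, O,O',O''} and its rotations — is already available, there is essentially no obstacle; the only point needing care is the bookkeeping, namely matching each intersection $X^\sigma_{ij}\cap\mathcal{O}^{(\cdot)\sigma}$ to the correct one of the three monomial-type summands of $X^\sigma_{ij}$, which is exactly what \eqref{eq:X12 sigma + O,O',O''} and its $\prime$, $\prime\prime$ analogues supply.
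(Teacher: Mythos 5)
Your proposal is correct and follows essentially the same route as the paper: the paper proves \eqref{eq: X12 DS} by combining the directness of \eqref{eq:X12-DS} with the identifications in \eqref{eq:X12 sigma + O,O',O''}, and then obtains \eqref{eq: X23 DS}, \eqref{eq: X31 DS} by applying the automorphisms $\prime$ and $\prime\prime$, exactly as in your second alternative. Your first alternative (redoing the substitution with \eqref{eq:X23-DS}, \eqref{eq:X31-DS} and the rotated intersection formulas) is only a cosmetic variant of the same argument.
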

\begin{proof}
By \eqref{eq:X12-DS} and \eqref{eq:X12 sigma + O,O',O''}, we obtain \eqref{eq: X12 DS}.
Apply the maps $\prime$ and $\prime \prime$ to \eqref{eq: X12 DS} to obtain \eqref{eq: X23 DS} and \eqref{eq: X31 DS}, respectively.
The result follows.
\end{proof}

We now return our attention to the Lie algebra $\tet$.
In Lemmas \ref{lem:Lij cap O} and \ref{lem:X_ij DS}, we derived results related to $X^\sigma_{12}$, $X^\sigma_{23}$, $X^\sigma_{31}$ in $\L$.
Pulling these results back to $\tet$ using the map $\sigma$ yields the following results.
\begin{proposition}\label{Prop:direct sums}
The following sums are direct:
\begin{align*}
	\mathcal{O}
		&= (X_{12} \cap \mathcal{O}) + (X_{23} \cap \mathcal{O}) + (X_{31} \cap \mathcal{O}), \\
	\mathcal{O}'
		&= (X_{23} \cap \mathcal{O'}) + (X_{31} \cap \mathcal{O}') + (X_{12} \cap \mathcal{O}'), \\
	\mathcal{O}''
		&= (X_{31} \cap \mathcal{O}'') + (X_{12} \cap \mathcal{O}'') + (X_{23} \cap \mathcal{O}''). 
\end{align*}
Moreover, the following sums are direct:
\begin{align*}
	X_{12} & = (X_{12} \cap \mathcal{O}) + (X_{12} \cap \mathcal{O}^{\prime}) + (X_{12} \cap \mathcal{O}^{\prime\prime}), \\
	X_{23} & = (X_{23} \cap \mathcal{O}) + (X_{23} \cap \mathcal{O}^{\prime}) + (X_{23} \cap \mathcal{O}^{\prime\prime}), \\
	X_{31} & = (X_{31} \cap \mathcal{O}) + (X_{31} \cap \mathcal{O}^{\prime}) + (X_{31} \cap \mathcal{O}^{\prime\prime}). 
\end{align*}
\end{proposition}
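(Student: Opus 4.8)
The plan is to transport everything through the Lie algebra isomorphism $\sigma : \tet \to L(\sl2)^+$ and invoke the results already proved in $L(\sl2)^+$. Since $\sigma$ is a vector space isomorphism, it preserves sums, intersections, and directness: for subspaces $V, W \subseteq \tet$ we have $\sigma(V \cap W) = \sigma(V) \cap \sigma(W)$ and $\sigma(V + W) = \sigma(V) + \sigma(W)$, and the sum $V + W$ is direct if and only if $\sigma(V) + \sigma(W)$ is direct. By definition $X_{ij}^\sigma = \sigma(X_{ij})$, and by construction (Remark \ref{rmk:O', O''} together with the definition of $\mathcal{O}$) the images $\sigma(\mathcal{O})$, $\sigma(\mathcal{O}')$, $\sigma(\mathcal{O}'')$ are exactly the subalgebras $\mathcal{O}^\sigma$, $\mathcal{O}^{\prime\sigma}$, $\mathcal{O}^{\prime\prime\sigma}$ appearing in Lemmas \ref{lem:Osigma DS} and \ref{lem:X_ij DS}.

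First I would record the observation about $\sigma$ above as a one-line remark (directness and the lattice operations on subspaces are preserved by any linear isomorphism). Next, I would apply $\sigma$ to each of the six claimed direct-sum decompositions in the statement: for the first, $\sigma$ sends $\mathcal{O} = (X_{12}\cap\mathcal{O}) + (X_{23}\cap\mathcal{O}) + (X_{31}\cap\mathcal{O})$ to $\mathcal{O}^\sigma = (X_{12}^\sigma\cap\mathcal{O}^\sigma) + (X_{23}^\sigma\cap\mathcal{O}^\sigma) + (X_{31}^\sigma\cap\mathcal{O}^\sigma)$, which is precisely \eqref{eq(1):O sigma DS} of Lemma \ref{lem:Osigma DS}, and that lemma asserts this sum is direct. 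The remaining two decompositions of $\mathcal{O}'$ and $\mathcal{O}''$ match \eqref{eq(2):O sigma DS} and \eqref{eq(3):O sigma DS} after reordering the summands (which does not affect directness). Likewise, the three decompositions of $X_{12}$, $X_{23}$, $X_{31}$ are carried by $\sigma$ to \eqref{eq: X12 DS}, \eqref{eq: X23 DS}, \eqref{eq: X31 DS} of Lemma \ref{lem:X_ij DS}. Since each of those six sums in $L(\sl2)^+$ is already known to be direct, pulling back along $\sigma^{-1}$ gives the six direct sums in $\tet$.

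There is no real obstacle here; the proof is a formal transport of structure. The only point requiring a tiny bit of care is checking that the summand subalgebras in the $\tet$-statement are indeed the $\sigma$-preimages of the $L(\sl2)^+$-subalgebras named in Lemmas \ref{lem:Osigma DS} and \ref{lem:X_ij DS} — namely that $\sigma(\mathcal{O}') = \mathcal{O}^{\prime\sigma}$ and $\sigma(\mathcal{O}'') = \mathcal{O}^{\prime\prime\sigma}$ — but this is immediate from the definitions $\mathcal{O}^{\prime\sigma} := \sigma(\mathcal{O}')$, $\mathcal{O}^{\prime\prime\sigma} := \sigma(\mathcal{O}'')$, and similarly $X_{ij}^\sigma := \sigma(X_{ij})$. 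Hence the proof is essentially: ``Apply $\sigma^{-1}$ to Lemmas \ref{lem:Osigma DS} and \ref{lem:X_ij DS}, using that $\sigma$ is a vector space isomorphism and therefore preserves intersections, sums, and the property of a sum being direct.''
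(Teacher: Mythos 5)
Your proposal is correct and follows essentially the same route as the paper: the paper's proof of Proposition \ref{Prop:direct sums} is precisely to pull the direct-sum decompositions of Lemmas \ref{lem:Osigma DS} and \ref{lem:X_ij DS} back to $\tet$ along the isomorphism $\sigma$. Your explicit remark that a linear isomorphism preserves sums, intersections, and directness is the (tacit) justification the paper relies on, so nothing is missing.
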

\begin{proof}
By Lemmas \ref{lem:X_12, O,O',O''} and \ref{lem:X_ij DS}.
\end{proof}

We summarize the results of Proposition \ref{Prop:direct sums} in the following $3\times 3$ matrix:
\begin{equation}\label{matrix(1): X_ij}
{\renewcommand{\arraystretch}{1.5}
\begin{array}{c|ccccc}
\tet	& \mathcal{O} & & \mathcal{O}' & & \mathcal{O}'' \\
\hline
X_{12}	& X_{12}\cap \mathcal{O} & & X_{12}\cap \mathcal{O'} & & X_{12}\cap \mathcal{O''} \\
X_{23}	& X_{23}\cap \mathcal{O} & & X_{23}\cap \mathcal{O'} & & X_{23}\cap \mathcal{O''} \\
X_{31}	& X_{31}\cap \mathcal{O} & & X_{31}\cap \mathcal{O'} & & X_{31}\cap \mathcal{O''} 
\end{array}}
\end{equation}
This matrix illustrates:
\begin{itemize}[itemsep=0pt]
	\item In each row, the sum of the three entries is direct and equal to the subspace on the left.
	\item In each column, the sum of the three entries is direct and equal to the subspace on the top.
	\item The sum of all nine entries is direct and equal to the Lie algebra $\tet$.	
\end{itemize}

\begin{theorem}\label{thm:Xij basis}
The following {\rm(i)}--{\rm(iv)} hold.
\begin{enumerate}[label=(\roman*), font=\normalfont]
	\item The subspace $X_{12}$ has a basis
	\begin{equation}\label{eq:basis X_12}
	\bx_{i}, \qquad \bz'_{i}, \qquad \by''_{i} \qquad \qquad  (i \in \mathbb{N}_0).
	\end{equation}
	\item The subspace $X_{23}$ has a basis
	\begin{equation}\label{eq:basis X_23}
	\by_{i}, \qquad \bx'_{i}, \qquad \bz''_{i} \qquad \qquad  (i \in \mathbb{N}_0).
	\end{equation}
	\item The subspace $X_{31}$ has a basis
	\begin{equation}\label{eq:basis X_31}
	\bz_{i}, \qquad \by'_{i}, \qquad \bx''_{i} \qquad \qquad  (i \in \mathbb{N}_0).
	\end{equation}
	\item The union of \eqref{eq:basis X_12}--\eqref{eq:basis X_31} forms a basis for $\tet$.
\end{enumerate}
\end{theorem}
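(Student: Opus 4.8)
The plan is to deduce all four parts directly from the machinery already assembled, reducing everything to the $\sigma$-image and the decompositions recorded in Proposition \ref{Prop:direct sums} and Lemma \ref{lem:X_12, O,O',O''}. First I would prove part (i). By Lemma \ref{lem:X_ij DS}, equation \eqref{eq: X12 DS}, the subspace $X^\sigma_{12}$ decomposes as the direct sum $(X^\sigma_{12}\cap\mathcal{O}^\sigma)+(X^\sigma_{12}\cap\mathcal{O}^{\prime\sigma})+(X^\sigma_{12}\cap\mathcal{O}^{\prime\prime\sigma})$. By Lemma \ref{lem:X_12, O,O',O''}(i)--(iii), these three summands have bases $\{\bx_i\}_{i\in\mathbb{N}_0}$, $\{\bz'_i\}_{i\in\mathbb{N}_0}$, and $\{\by''_i\}_{i\in\mathbb{N}_0}$ respectively. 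Taking the union of bases of the summands of a direct sum yields a basis of the whole space, so $\{\bx_i,\bz'_i,\by''_i \mid i\in\mathbb{N}_0\}$ is a basis for $X^\sigma_{12}$. Since $\sigma:\tet\to\L$ is a Lie algebra isomorphism (Lemma \ref{def: sigma}) carrying $X_{12}$ onto $X^\sigma_{12}$, pulling this basis back through $\sigma^{-1}$ gives the basis \eqref{eq:basis X_12} for $X_{12}$ (recall, per Notation \ref{notation:x,y,z}, that we use the same symbols $\bx_i,\bz'_i,\by''_i$ for the preimages). This proves (i).

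Next I would obtain (ii) and (iii) by applying the order-three automorphism $\prime$ of $\tet$ (the permutation $(123)\in S_4$) to the statement of (i). Under $\prime$ we have $x_{12}\mapsto x_{23}$, hence $X_{12}\mapsto X_{23}$ and $X_{12}\mapsto\cdots$; also $\mathcal{O}\mapsto\mathcal{O}'$, $\mathcal{O}'\mapsto\mathcal{O}''$, $\mathcal{O}''\mapsto\mathcal{O}$. The Remark following Lemma \ref{lem:X_12, O,O',O''} already notes that applying $\prime$ and $\prime\prime$ to Lemma \ref{lem:X_12, O,O',O''} yields the analogous statements for $X_{23}$ and $X_{31}$; combined with Lemma \ref{lem:X_ij DS}, equations \eqref{eq: X23 DS} and \eqref{eq: X31 DS}, the same direct-sum-of-bases argument as in (i) gives bases for $X_{23}$ and $X_{31}$. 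Tracking how the symbols transform (this is exactly the bookkeeping recorded in the $3\times 3$ table in the Introduction and in \eqref{matrix(1): X_ij}), one reads off that $X_{23}$ has basis $\{\by_i,\bx'_i,\bz''_i\}$ and $X_{31}$ has basis $\{\bz_i,\by'_i,\bx''_i\}$, which are \eqref{eq:basis X_23} and \eqref{eq:basis X_31}.

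For part (iv) I would invoke Corollary \eqref{cor:tet=ds(Xij)}, which states $\tet = X_{12}+X_{23}+X_{31}$ with the sum direct. By parts (i)--(iii) each summand has the displayed basis, so the union of the three bases \eqref{eq:basis X_12}--\eqref{eq:basis X_31} is a basis for $\tet$ — again using that a union of bases of the summands of a direct sum is a basis of the sum. Equivalently, one may note that this union is precisely the collection of vectors in all nine entries of the table, and that the nine-fold sum displayed after \eqref{matrix(1): X_ij} is direct and equals $\tet$.

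The only genuinely delicate point is notational rather than mathematical: one must be careful that the symbols $\bx'_i,\by'_i,\bz'_i$ and $\bx''_i,\by''_i,\bz''_i$ appearing in (ii)--(iv) are the ones \emph{defined} (in Lemma \ref{B.lem:bxbybz}) to be the $\prime$- and $\prime\prime$-images of $\bx_i,\by_i,\bz_i$, so that applying the automorphism $\prime$ to the identities of Definition \ref{Def:x_ij like in O} produces exactly those vectors and no spurious scalar or index shift creeps in. Once this identification is pinned down, every step above is a one-line consequence of a previously established lemma, so I expect no real obstacle; the main work is simply verifying the transformation rule for the nine families of vectors under $S_4$, which is routine given the commuting square in Lemma \ref{lem:auto L(sl2)+}.
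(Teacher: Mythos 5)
Your proposal is correct and follows essentially the same route as the paper: combine the direct-sum decompositions of $X^\sigma_{12}$, $X^\sigma_{23}$, $X^\sigma_{31}$ with the bases of the intersections $X^\sigma_{ij}\cap\mathcal{O}^{\sigma}$, $X^\sigma_{ij}\cap\mathcal{O}^{\prime\sigma}$, $X^\sigma_{ij}\cap\mathcal{O}^{\prime\prime\sigma}$ from Lemmas \ref{lem:Lij cap O} and \ref{lem:X_12, O,O',O''}, transport (ii) and (iii) by the automorphism $\prime$, pull back through $\sigma$, and deduce (iv) from the direct sum \eqref{cor:tet=ds(Xij)}. Your only additions are to make explicit the use of Lemma \ref{lem:X_ij DS} and the $\prime$-bookkeeping, which the paper leaves implicit.
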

\begin{proof}
(i)--(iii): Use Lemma \ref{lem:Lij cap O}(i)--(iii) and Lemma \ref{lem:X_12, O,O',O''}(i)--(iii) along with the automorphism $\prime$.\\
(iv): By \eqref{cor:tet=ds(Xij)}.
\end{proof}

\noindent
Theorem \ref{thm:Xij basis} is summarized by the following $3\times 3$ matrix:
\begin{equation*}\label{matrix(2): X_ij}
{\renewcommand{\arraystretch}{1.5}
\begin{array}{c|ccccc}
\tet	& \mathcal{O} & & \mathcal{O}' & & \mathcal{O}'' \\
\hline
X_{12}	& \{\bx_{i} \mid i \in \mathbb{N}_0\} & & \{\bz'_{i} \mid i \in \mathbb{N}_0\} & & \{\by''_{i} \mid i \in \mathbb{N}_0\} \\
X_{23}	& \{\by_{i} \mid i \in \mathbb{N}_0\} & & \{\bx'_{i} \mid i \in \mathbb{N}_0\} & & \{\bz''_{i} \mid i \in \mathbb{N}_0\} \\
X_{31}	& \{\bz_{i} \mid i \in \mathbb{N}_0\} & & \{\by'_{i} \mid i \in \mathbb{N}_0\} & & \{\bx''_{i} \mid i \in \mathbb{N}_0\} 
\end{array}}
\end{equation*}
This matrix illustrates:
\begin{itemize}[itemsep=0pt]
	\item For every entry, the vectors form a basis for the corresponding entry in matrix \eqref{matrix(1): X_ij}.
	\item In each row, the union of the vectors shown in the three entries forms a basis for the subspace on the left.
	\item In each column, the union of the vectors shown in the three entries forms a basis for the subspace on the top.
	\item The union of the vectors shown in all nine entries forms a basis for $\tet$.
\end{itemize}

\section{A second basis for $\mathcal{O}$}\label{sec:2nd basis O}

In Theorem \ref{thm:1st basis for O}, we introduced a basis for $\mathcal{O}$. 
In this section, we consider another basis of $\mathcal{O}$, consisting of $x_{ij}$-like elements introduced in Definition \ref{Def:x_ij like in O}, and discuss how the Lie bracket acts on this basis.

\begin{theorem}\label{2nd main thm}
The Onsager subalgebra $\mathcal{O}$ has a basis
\begin{equation}\label{2nd basis for O}
	\bx_{i}, \qquad \by_{i}, \qquad \bz_{i} \qquad \qquad (i\in \mathbb{N}_0),
\end{equation}
such that $x_{12}=\bx_{0}$ and $x_{03}=(\bz_{0}-\by_{0})/4$.
The Lie bracket acts on this basis as follows.
\begin{align}
	&&[\bx_{i}, \bx_{j}] & = 0 \qquad \qquad (i,j \geq 0), &&\label{eq: [x,x]}\\
	&&[\by_{i}, \by_{j}] & = 0 \qquad \qquad (i,j \geq 0), &&\label{eq: [y,y]}\\
	&&[\bz_{i}, \bz_{j}] & = 0 \qquad \qquad (i,j \geq 0). &&\label{eq: [z,z]}
\end{align}
Moreover,
\begin{align}	
	&&[\bx_{0}, \by_{0}] & = \bx_{1} + 2\by_{0} - 4\bx_{0}, \label{eq: [x,y] 0,0}\\	
	&&[\bx_{1}, \by_{0}] & = \bx_{2} + 2\by_{1} - 4\bx_{1} - 8\by_{0} + 16\bx_{0}, \label{eq: [x,y] 1,0}\\
	&&[\bx_{i}, \by_{0}] & = \bx_{i+1} + 2\by_{i} - 4\bx_{i} - 8\by_{i-1}  && (i\geq 2), &&\\	
	&&[\bx_{i}, \by_{j}] & = \bx_{i+j+1} + 2\by_{i+j} &&  (i = 0,1; \ j\geq 1),\\		
	&&[\bx_{i}, \by_{j}] & = \bx_{i+j+1} + 2\by_{i+j} - 16 \bx_{i+j-1} - 32\by_{i+j-2} && (i \geq 2, j\geq 1), && \label{eq: [x,y] ij geq 21}	
\end{align}
and
\begin{align}	
	&&[\bx_{0}, \bz_{0}] & = \bx_{1} - 2\bz_{0} + 4\bx_{0}, \label{eq:[x,z] 0,0}\\	
	&&[\bx_{1}, \bz_{0}] & = \bx_{2} - 2\bz_{1} + 4\bx_{1} - 8\bz_{0} + 16\bx_{0}, \label{eq:[x,z] 1,0}\\
	&&[\bx_{i}, \bz_{0}] & = \bx_{i+1} - 2\bz_{i} +  4 \bx_{i} - 8\bz_{i-1} && (i\geq 2), \\
	&&[\bx_{i}, \bz_{j}] & = \bx_{i+j+1} - 2\bz_{i+j} &&  (i=0,1; \ j\geq 1), \\		
	&&[\bx_{i}, \bz_{j}] & = \bx_{i+j+1}-2\bz_{i+j} - 16 \bx_{i+j-1} + 32 \bz_{i+j-2} && (i\geq 2, j \geq 1), && \label{eq:[x,z] ij geq 21} 
\end{align}
and
\begin{align}
	&&[\by_{0}, \bz_{0}] & = \bz_{1} - \by_{1}, \label{eq: [y,z] 0,0}\\	
	&&[\by_{i}, \bz_{0}] & = \bz_{i+1} - \by_{i+1} + 4 \bz_{i} - 4\by_{i} &&  (i\geq 1), &&\\	
	&&[\by_{0}, \bz_{j}] & = \bz_{j+1} - \by_{j+1} - 4 \bz_{j} + 4 \by_{j}  &&  (j\geq 1), &&\\	
	&&[\by_{i}, \bz_{j}] & = \bz_{i+j+1}-\by_{i+j+1} - 16 \bz_{i+j-1} +16 \by_{i+j-1} &&  (i,j \geq 1).&& \label{eq: [y,z] i,j geq 1} 
\end{align}
\end{theorem}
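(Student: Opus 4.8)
The plan is to transport the entire computation into $L(\sl2)^+$ via the isomorphism $\sigma$, where the three families $\bx_i,\by_i,\bz_i$ have the explicit vector-valued forms recorded in Lemma~\ref{lem:xij-like elts in L(sl2)+}. Since $\sigma$ is a Lie algebra isomorphism, it suffices to prove every displayed identity with $\bx_i,\by_i,\bz_i$ replaced by their $\sigma$-images (we are already using Notation~\ref{notation:x,y,z}, so no renaming is needed). First I would record the basis claim: by Proposition~\ref{prop:Osigma}, $\{\bx_i,\by_i,\bz_i\}_{i\in\mathbb{N}_0}$ is a basis for $\mathcal{O}^\sigma$, hence for $\mathcal{O}$. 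The two normalization statements $x_{12}=\bx_0$ and $x_{03}=(\bz_0-\by_0)/4$ follow by direct inspection: $\bx_0=a_0+4b_{-2}=a_0=x_{12}$ from \eqref{eq:a(i) tet}, and $\bz_0-\by_0=(a_1+2a_0+2b_0)-(a_1+2a_0-2b_0)=4b_0=4x_{03}$ from \eqref{eq:b(i) tet}, \eqref{eq:c(i) tet}.

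Next I would dispose of \eqref{eq: [x,x]}, \eqref{eq: [y,y]}, \eqref{eq: [z,z]}: in vector-valued form $\bx_i$ lies in $x\otimes\mathcal{A}$, $\by_i$ in $y\otimes\mathcal{A}$, $\bz_i$ in $z\otimes\mathcal{A}$ (by \eqref{eq:x12 in O}, \eqref{eq:x23 in O}, \eqref{eq:x31 in O}), and $[x\otimes f,x\otimes g]=[x,x]\otimes fg=0$, similarly for $y$ and $z$; so these nine-fold families of brackets all vanish. For the mixed brackets, the strategy is uniform: from Lemma~\ref{lem:xij-like elts in L(sl2)+}, write
\begin{align*}
	\bx_i^\sigma &= (-1)^i 4^i\, x\otimes\bigl((2t-1)^i-(2t-1)^{i-2}\bigr),\\
	\by_i^\sigma &= (-1)^{i+1}4^{i+1}\, y\otimes\bigl(t(2t-1)^i - t(2t-1)^{i-1}\bigr),\\
	\bz_i^\sigma &= (-1)^{i+1}4^{i+1}\, z\otimes\bigl((1-t)(2t-1)^i + (1-t)(2t-1)^{i-1}\bigr),
\end{align*}
compute $[\bx_i^\sigma,\by_j^\sigma]$ using Lemma~\ref{v-val comp}(i) (which gives $[x\otimes f, y\otimes g]=2(x\otimes fg + y\otimes fg)$), and then re-expand the resulting $x$- and $y$-components back into the bases $\{\bx_n^\sigma\}$ and $\{\by_n^\sigma\}$ respectively. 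The key polynomial identity driving everything is that multiplication by the scalar $2t-1$ acts on the $x$-slot as a shift: since $\bx_{n}^\sigma$ is (up to sign and power of $4$) $x\otimes\bigl((2t-1)^n-(2t-1)^{n-2}\bigr)$, one checks by telescoping that $(2t-1)$ times that polynomial re-expresses cleanly in terms of $(2t-1)^{n+1}-(2t-1)^{n-1}$ and lower, which is exactly the source of the "$\bx_{i+j+1}$", "$-16\bx_{i+j-1}$" pattern (note $16 = 4\cdot 4$, matching the $4^i$ scaling, and $(2t-1)^2$ contributing the square). The boundary cases $i=0$, $i=1$ must be separated because the interpretations $a_{-1}:=0$, $b_{-2}:=0$, $(2t-1)^{-1}:=0$ make several lower-order terms drop, which is precisely why \eqref{eq: [x,y] 0,0}, \eqref{eq: [x,y] 1,0} and their $\bz$-counterparts have extra terms like $-4\bx_0$, $+16\bx_0$; I would handle these by direct substitution of the small-index vectors rather than by the general formula.

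The analogous computation for $[\bx_i,\bz_j]$ uses Lemma~\ref{v-val comp}(iii) and the same shift identity with the sign flip coming from the "$+$" instead of "$-$" in the $\bz$-polynomial, which explains why the coefficient of $\bz_{i+j-2}$ in \eqref{eq:[x,z] ij geq 21} is $+32$ whereas the coefficient of $\by_{i+j-2}$ in \eqref{eq: [x,y] ij geq 21} is $-32$. For $[\by_i,\bz_j]$ one uses Lemma~\ref{v-val comp}(ii), noting that the product of the $\by$-polynomial and the $\bz$-polynomial lands in the $y$- and $z$-slots simultaneously with equal coefficient, and the factor $t(1-t)(2t-1)^{i+j}$ re-expands symmetrically into $\by_{i+j+1},\bz_{i+j+1}$ and lower, yielding the antisymmetric-looking "$\bz_{i+j+1}-\by_{i+j+1}$" combinations. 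I expect the main obstacle to be purely bookkeeping: carefully tracking the powers of $2t-1$ after multiplication and correctly re-expressing them in the non-monomial basis $\{(2t-1)^n-(2t-1)^{n-2}\}$ (or the $t$-weighted / $(1-t)$-weighted analogues), together with getting the many boundary cases ($i$ or $j$ equal to $0$ or $1$) exactly right so that the stray low-order correction terms match. None of this is deep, so in the write-up I would present one representative case in full — say $[\bx_i,\by_j]$ for $i\geq 2$, $j\geq 1$ — and then state that the remaining identities follow by the same method together with direct verification of the finitely many small-index cases.
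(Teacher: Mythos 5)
Your proposal is correct and follows essentially the same route as the paper: obtain the basis claim from Proposition \ref{prop:Osigma}, verify $x_{12}=\bx_0$ and $x_{03}=(\bz_0-\by_0)/4$ from \eqref{eq:a(i) tet}--\eqref{eq:c(i) tet}, kill \eqref{eq: [x,x]}--\eqref{eq: [z,z]} since each family lives in a single slot $u\otimes\mathcal{A}$ with $[u,u]=0$, and then compute the mixed brackets in vector-valued form via Lemma \ref{v-val comp} and re-expand. The only cosmetic difference is that the paper first rewrites $\bx_i,\by_i,\bz_i$ in the factored forms \eqref{lem: bx pi form}--\eqref{lem: bz pi form} (using $p_i=(2t-1)^i$ and the identities \eqref{eq:algebra p_i}) before bracketing, whereas you work with the unfactored differences and telescope; both are routine bookkeeping and lead to the same verification.
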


\medskip
To prove this theorem, we introduce the polynomials $p_i \in \mathbb{F}[t]$ for computational convenience, defined as
\begin{equation}\label{eq:poly p_i}
	p_i := (2t-1)^i  \qquad i\in\mathbb{N}_0.
\end{equation}
Additionally, we set $p_{-1}:=0$, $p_{-2}:=0$.
It is readily observed that for $i,j \in \mathbb{N}_0$
\begin{gather}\label{eq:algebra p_i}
	p_ip_j  = p_{i+j}, \qquad 
	2tp_i  = p_i + p_{i+1}, \qquad 
	2(1-t)p_i  = p_i-p_{i+1}.
\end{gather}
Using the polynomials $p_i$, we express the vectors $\bx_{i}$, $\by_{i}$, $\bz_{i}$ for $i \in \mathbb{N}_0$ in \eqref{eq:x12 in O}--\eqref{eq:x31 in O} as follows:
\begin{gather}
	\bx_{i}  = (-1)^i4^i \begin{pmatrix} p_i - p_{i-2} \\ 0 \\ 0 \end{pmatrix}, \label{eq:bx(i)}\\
	\by_{i}  = (-1)^{i+1} 4^{i+1} \begin{pmatrix} 0 \\ t(p_i - p_{i-1}) \\ 0 \end{pmatrix}, \qquad \qquad
	\bz_{i}  = (-1)^{i+1} 4^{i+1} \begin{pmatrix} 0 \\ 0 \\ (1-t)(p_i + p_{i-1})   \end{pmatrix}. \label{eq:by,bz(i)}
\end{gather}

\begin{lemma}
We have
\begin{equation}\label{lem: bx pi form}
	\bx_{0}=\begin{pmatrix} 1 \\ 0 \\ 0 \end{pmatrix}, \qquad 
	\bx_{1}=-4\begin{pmatrix} 2t-1 \\ 0 \\ 0 \end{pmatrix}, \qquad
	\bx_{i} = (-1)^{i}4^{i+1}t(t-1)\begin{pmatrix} p_{i-2} \\ 0 \\ 0 \end{pmatrix},
\end{equation}
for $i=2,3,4,\ldots$, and
\begin{equation}\label{lem: by pi form}
	\by_{0}=-4\begin{pmatrix} 0 \\ t \\ 0 \end{pmatrix}, \qquad 
	\by_{i} = 2\cdot(-1)^{i+1}\cdot4^{i+1}t(t-1) \begin{pmatrix} 0 \\ p_{i-1} \\ 0 \end{pmatrix},
\end{equation}
for $i\in \mathbb{N}$, and 
\begin{equation}\label{lem: bz pi form}
	\bz_{0}=-4 \begin{pmatrix} 0 \\ 0 \\ 1-t \end{pmatrix}, \qquad  
	\bz_{i} = 2 \cdot (-1)^{i} \cdot 4^{i+1}t(t-1)\begin{pmatrix} 0 \\ 0 \\ p_{i-1}  \end{pmatrix},
\end{equation}
for $i\in \mathbb{N}$.
\end{lemma}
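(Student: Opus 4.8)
The plan is to derive the formulas \eqref{lem: bx pi form}--\eqref{lem: bz pi form} directly from the closed-form expressions \eqref{eq:bx(i)}--\eqref{eq:by,bz(i)} by simplifying the polynomial entries using the identities \eqref{eq:algebra p_i} together with the definition \eqref{eq:poly p_i}. The key algebraic fact to isolate first is the factorization
\begin{equation*}
	p_i - p_{i-2} = p_{i-2}(p_2 - 1) = p_{i-2}\bigl((2t-1)^2 - 1\bigr) = 4t(t-1)\,p_{i-2} \qquad (i \geq 2),
\end{equation*}
which handles the first coordinate of $\bx_i$; similarly $t(p_i - p_{i-1}) = t\,p_{i-1}(p_1 - 1) = t\,p_{i-1}(2t-2) = 2t(t-1)\,p_{i-1}$ handles $\by_i$, and $(1-t)(p_i + p_{i-1}) = (1-t)\,p_{i-1}(p_1 + 1) = (1-t)\,p_{i-1}\cdot 2t = -2t(t-1)\,p_{i-1}$ handles $\bz_i$.

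First I would treat the small cases $i = 0$ and $i = 1$ (and, for $\bx$, also note that the general formula with the factor $t(t-1)p_{i-2}$ already subsumes $i \geq 2$ while $i=0,1$ are genuinely special because of the conventions $p_{-1} = p_{-2} = 0$). For $\bx_0$: from \eqref{eq:bx(i)} with $i=0$ we get $(-1)^0 4^0 (p_0 - p_{-2}) = p_0 = 1$ in the first coordinate, giving $\bx_0 = (1,0,0)^\top$. For $\bx_1$: $(-1)^1 4^1(p_1 - p_{-1}) = -4p_1 = -4(2t-1)$, giving the stated middle expression. Then for $i \geq 2$, substitute the factorization $p_i - p_{i-2} = 4t(t-1)p_{i-2}$ into \eqref{eq:bx(i)}, pull the $4$ into the power $4^i$ to obtain $4^{i+1}$, and record the sign $(-1)^i$, yielding the third formula in \eqref{lem: bx pi form}.

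Next I would do the same for $\by$ and $\bz$. For $\by_0$: \eqref{eq:by,bz(i)} gives $(-1)^1 4^1 \cdot t(p_0 - p_{-1}) = -4t$ in the second coordinate, matching \eqref{lem: by pi form}. For $i \geq 1$, substitute $t(p_i - p_{i-1}) = 2t(t-1)p_{i-1}$ into the $\by_i$ formula of \eqref{eq:by,bz(i)}; the constant becomes $(-1)^{i+1}4^{i+1}\cdot 2 \cdot t(t-1)$, which is exactly $2(-1)^{i+1}4^{i+1}t(t-1)$ times $p_{i-1}$ in the second slot. For $\bz_0$: $(-1)^1 4^1 (1-t)(p_0 + p_{-1}) = -4(1-t)$, matching \eqref{lem: bz pi form}. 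For $i \geq 1$, substitute $(1-t)(p_i + p_{i-1}) = -2t(t-1)p_{i-1}$ into the $\bz_i$ formula; the sign $-1$ combines with $(-1)^{i+1}$ to give $(-1)^i$, and the constant becomes $2(-1)^i 4^{i+1} t(t-1)$, as claimed.

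This is an entirely computational lemma, so there is no conceptual obstacle; the only thing requiring care is bookkeeping of the conventions $p_{-1} = p_{-2} = 0$ when checking that the unified "$i \geq 2$" (resp. "$i \geq 1$") formulas do not accidentally apply at $i = 0, 1$, and keeping the signs and powers of $4$ consistent when absorbing the factor $4t(t-1)$ (resp. $2t(t-1)$) coming from the factorizations above. I would present the $\bx$ computation in full and remark that the $\by$, $\bz$ cases are analogous, citing \eqref{eq:by,bz(i)} and \eqref{eq:algebra p_i}.
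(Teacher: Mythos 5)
Your proposal is correct and follows essentially the same route as the paper, which simply evaluates \eqref{eq:bx(i)}--\eqref{eq:by,bz(i)} using \eqref{eq:poly p_i} and simplifies; your explicit factorizations $p_i-p_{i-2}=4t(t-1)p_{i-2}$, $t(p_i-p_{i-1})=2t(t-1)p_{i-1}$, $(1-t)(p_i+p_{i-1})=-2t(t-1)p_{i-1}$ and the separate treatment of the boundary cases $i=0,1$ are exactly the computation the paper leaves implicit. All signs and powers of $4$ check out, so nothing further is needed.
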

\begin{proof}
Evaluate \eqref{eq:bx(i)}, \eqref{eq:by,bz(i)} using \eqref{eq:poly p_i} and simplify them to obtain the results.
\end{proof}

We now prove the theorem.

\begin{proof}[Proof of Theorem \ref{2nd main thm}]
By Proposition \ref{prop:Osigma}, the elements $\bx_{i}$, $\by_{i}$, $\bz_{i}$ form a basis for $\mathcal{O}$.
From \eqref{eq:a(i) tet} at $i=0$, we have $x_{12}=\bx_{0}$.
From \eqref{eq:b(i) tet} and \eqref{eq:c(i) tet} at $i=0$, we have
\begin{align*}
	\by_{0} & = [x_{03}, x_{12}] + 2x_{12} - 2x_{03},\\
	\bz_{0} & = [x_{03}, x_{12}] + 2x_{12} + 2x_{03}.
\end{align*}
Solve these equations for $x_{03}$ to get $(\bz(0)-\by(0))/4$.
We now prove \eqref{eq: [x,x]}--\eqref{eq: [y,z] i,j geq 1}.
First, equations \eqref{eq: [x,x]}--\eqref{eq: [z,z]} follow from the fact that $[u,u]=0$ for all $u \in \sl2$. 
Next, to prove equations \eqref{eq: [x,y] 0,0}--\eqref{eq: [y,z] i,j geq 1}, we consider the images of each basis element in \eqref{2nd basis for O} under the map $\sigma$ and verify that their images satisfy equations \eqref{eq: [x,y] 0,0}--\eqref{eq: [y,z] i,j geq 1}.
This verification proceeds as follows:
\begin{enumerate}[label=(\roman*), font=\normalfont]
	\item Evaluate $[\bx_{i}, \by_{j}]$ using \eqref{lem: bx pi form}, \eqref{lem: by pi form}, and Lemma \ref{v-val comp}, and then simplify the results to obtain the expressions on the right-hand side in equations \eqref{eq: [x,y] 0,0}--\eqref{eq: [x,y] ij geq 21}.

	\item Evaluate $[\bx_{i}, \bz_{j}]$ using \eqref{lem: bx pi form}, \eqref{lem: bz pi form}, and Lemma \ref{v-val comp}, and then simplify the results to obtain the expressions on the right-hand side in equations \eqref{eq:[x,z] 0,0}--\eqref{eq:[x,z] ij geq 21}.

	\item Evaluate $[\by_{i}, \bz_{j}]$ using \eqref{lem: by pi form}, \eqref{lem: bz pi form}, and Lemma \ref{v-val comp}, and then simplify the results to obtain the expressions on the right-hand side in equations \eqref{eq: [y,z] 0,0}--\eqref{eq: [y,z] i,j geq 1}.
\end{enumerate}
Through steps (i)--(iii), it follows that equations \eqref{eq: [x,y] 0,0}--\eqref{eq: [y,z] i,j geq 1} are valid.
The proof is complete.
\end{proof}

\begin{lemma}
The basis elements in \eqref{2nd basis for O} for $\mathcal{O}$ are recursively obtained as follows.
Let
\begin{equation*}
	\bx_0 = x_{12}, \qquad 
	\by_0 = [x_{03},x_{12}] + 2x_{12} - 2x_{03}, \qquad
	\bz_0 = [x_{03},x_{12}] + 2x_{12} + 2x_{03}.
\end{equation*}
Then
\begin{align}
	\bx_1 & = [\bx_0, \by_0] - 2\by_0 + 4\bx_0 \label{pf:bx 1(1)}\\
		& = [\bx_0, \bz_0] +2\bz_0 - 4\bx_0, \label{pf:bx 1(2)}\\
	\by_1 & = 2\bx_1 + 2\by_0 - 2\bz_0 + \frac{1}{4}[\bx_1, \by_0] - \frac{1}{4}[\bx_1, \bz_0] - \frac{1}{2}[\by_0, \bz_0], \label{pf:by 1}\\
	\bz_1 & = 2\bx_1 + 2\by_0 - 2\bz_0 + \frac{1}{4}[\bx_1, \by_0] - \frac{1}{4}[\bx_1, \bz_0] + \frac{1}{2}[\by_0, \bz_0], \label{pf:bz 1}
\end{align}
and for $i\geq 2$
\begin{align}
	\bx_2 & = [\bx_1, \by_0] - 2\by_1 + 4\bx_1 + 8\by_0 - 16\bx_0\label{pf:bx 2(1)}\\
		& = [\bx_1, \bz_0] + 2\bz_1 - 4\bx_1 + 8\bz_0 - 16\bx_0,\label{pf:bx 2(2)}\\
	\bx_{i+1} & = [\bx_i, \by_0] - 2\by_i + 4\bx_i + 8\by_{i-1}\label{pf:bx i+1(1)}\\
		& = [\bx_i, \bz_0] + 2\bz_i - 4\bx_i + 8\bz_{i-1},\label{pf:bx i+1(2)}\\
	\by_i & = 2\bx_i + \frac{1}{4}[\bx_i, \by_0] - \frac{1}{4}[\bx_i, \bz_0] - \frac{1}{2}[\by_{i-1}, \bz_0], \label{pf:by i}\\
	\bz_i & = 2\bx_i + \frac{1}{4}[\bx_i, \by_0] - \frac{1}{4}[\bx_i, \bz_0] + \frac{1}{2}[\by_0, \bz_{i-1}]. \label{pf:bz i}
\end{align}
\end{lemma}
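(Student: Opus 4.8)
The plan is to verify each of the displayed identities by passing to $\L$ via the isomorphism $\sigma$ and carrying out the computation in the vector-valued form of Section~\ref{sec:vvf:L(sl2)+}, using the compact $p_i$-expressions \eqref{lem: bx pi form}--\eqref{lem: bz pi form} that were just established. Since $\sigma$ is a Lie algebra isomorphism, it suffices to check all the identities between the $\sigma$-images; and since the basis property of $\{\bx_i,\by_i,\bz_i\}$ for $\mathcal{O}$ is already recorded (Proposition~\ref{prop:Osigma}), each claimed identity is merely the assertion that two explicit vectors in $\L$ agree, which reduces to polynomial identities in $\mathcal{A}$.

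The first step is to rewrite the right-hand sides of \eqref{pf:bx 1(1)}--\eqref{pf:bz i} in the $p_i$-form. Using $x_{12}=\bx_0$, $x_{03}=(\bz_0-\by_0)/4$, and the formulas \eqref{lem: bx pi form}--\eqref{lem: bz pi form}, every bracket $[\bx_i,\by_0]$, $[\bx_i,\bz_0]$, $[\by_i,\bz_0]$, $[\by_0,\bz_j]$ appearing on the right is computed once and for all via Lemma~\ref{v-val comp}, exactly as in the proof of Theorem~\ref{2nd main thm}; in fact these brackets are already evaluated there in \eqref{eq: [x,y] 0,0}--\eqref{eq: [y,z] i,j geq 1}, so I can simply substitute those expressions. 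Then each right-hand side becomes a linear combination of $\bx$'s (resp. $\by$'s, $\bz$'s) with known $p_i$-coefficients, and I compare with the target $\bx_{i+1}$ (resp. $\by_i$, $\bz_i$) using the relations \eqref{eq:algebra p_i}, most importantly $p_ip_j=p_{i+j}$ and $2tp_i=p_i+p_{i+1}$, together with the base cases at small $i$ where $p_{-1}=p_{-2}=0$. For the two expressions given for $\bx_1$ (and similarly $\bx_2$, $\bx_{i+1}$), I check both and note they agree, which is automatic once each equals $\bx_1$.

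The one genuinely delicate point is the small-index bookkeeping: the formulas \eqref{lem: bx pi form}, \eqref{lem: by pi form}, \eqref{lem: bz pi form} have different shapes for $i=0$, $i=1$, and $i\ge 2$ (because of the $t(t-1)$ factor and the vanishing of $p_{-1},p_{-2}$), so the recursions \eqref{pf:bx 1(1)}--\eqref{pf:bz 1} at the bottom level, \eqref{pf:bx 2(1)}--\eqref{pf:bz 1} at level one, and \eqref{pf:bx i+1(1)}--\eqref{pf:bz i} for $i\ge 2$ really do need to be treated as three separate cases, and the "correction terms" ($+4\bx_0$, $-16\bx_0$, $+8\by_0$, etc.) are precisely what absorbs the discrepancy between these shapes. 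I expect this case analysis — rather than any single computation — to be the main obstacle, so I would organize the proof as: (a) record the explicit $\sigma$-images of the relevant brackets from the proof of Theorem~\ref{2nd main thm}; (b) verify \eqref{pf:bx 1(1)}--\eqref{pf:bz 1} directly from \eqref{lem: bx pi form}--\eqref{lem: bz pi form} at $i=0$; (c) verify \eqref{pf:bx 2(1)}--\eqref{pf:bx 2(2)} at $i=1$; (d) verify \eqref{pf:bx i+1(1)}--\eqref{pf:bz i} for general $i\ge 2$ by a uniform $p_i$-computation; and (e) invoke injectivity of $\sigma$ to conclude the identities hold in $\tet$.

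\begin{proof}
Throughout we work with the $\sigma$-images and use Notation~\ref{notation:x,y,z}; since $\sigma$ is a Lie algebra isomorphism (Lemma~\ref{def: sigma}), it suffices to prove the stated identities in $\L$. By Proposition~\ref{prop:Osigma} the vectors $\bx_i,\by_i,\bz_i$ form a basis for $\mathcal{O}^\sigma$, and by the proof of Theorem~\ref{2nd main thm} we have $x_{12}^\sigma=\bx_0$ and $x_{03}^\sigma=(\bz_0-\by_0)/4$; the defining equations of $\by_0,\bz_0$ in terms of $x_{03},x_{12}$ are recorded there as well. We use the $p_i$-form \eqref{lem: bx pi form}, \eqref{lem: by pi form}, \eqref{lem: bz pi form} of the basis vectors and the identities \eqref{eq:algebra p_i}.

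For \eqref{pf:bx 1(1)}: by \eqref{eq: [x,y] 0,0}, $[\bx_0,\by_0]=\bx_1+2\by_0-4\bx_0$, so $[\bx_0,\by_0]-2\by_0+4\bx_0=\bx_1$. For \eqref{pf:bx 1(2)}: by \eqref{eq:[x,z] 0,0}, $[\bx_0,\bz_0]=\bx_1-2\bz_0+4\bx_0$, so $[\bx_0,\bz_0]+2\bz_0-4\bx_0=\bx_1$. For \eqref{pf:by 1} and \eqref{pf:bz 1}: by \eqref{eq: [x,y] 1,0}, \eqref{eq:[x,z] 1,0}, \eqref{eq: [y,z] 0,0} we have
\begin{align*}
	\tfrac14[\bx_1,\by_0] &= \tfrac14\bx_2+\tfrac12\by_1-\bx_1-2\by_0+4\bx_0, \\
	\tfrac14[\bx_1,\bz_0] &= \tfrac14\bx_2-\tfrac12\bz_1+\bx_1-2\bz_0+4\bx_0, \\
	\tfrac12[\by_0,\bz_0] &= \tfrac12\bz_1-\tfrac12\by_1.
\end{align*}
Hence $\tfrac14[\bx_1,\by_0]-\tfrac14[\bx_1,\bz_0]=\tfrac12\by_1+\tfrac12\bz_1-2\bx_1-2\by_0+2\bz_0$, and subtracting $\tfrac12[\by_0,\bz_0]$ gives $\by_1-2\bx_1-2\by_0+2\bz_0$; adding $2\bx_1+2\by_0-2\bz_0$ yields $\by_1$, which is \eqref{pf:by 1}. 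Adding $\tfrac12[\by_0,\bz_0]$ instead gives $\bz_1-2\bx_1-2\by_0+2\bz_0$, and adding $2\bx_1+2\by_0-2\bz_0$ yields $\bz_1$, which is \eqref{pf:bz 1}.

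For \eqref{pf:bx 2(1)}: by \eqref{eq: [x,y] 1,0}, $[\bx_1,\by_0]=\bx_2+2\by_1-4\bx_1-8\by_0+16\bx_0$, so $[\bx_1,\by_0]-2\by_1+4\bx_1+8\by_0-16\bx_0=\bx_2$. For \eqref{pf:bx 2(2)}: by \eqref{eq:[x,z] 1,0}, $[\bx_1,\bz_0]=\bx_2-2\bz_1+4\bx_1-8\bz_0+16\bx_0$, so $[\bx_1,\bz_0]+2\bz_1-4\bx_1+8\bz_0-16\bx_0=\bx_2$.

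Now fix $i\ge 2$. For \eqref{pf:bx i+1(1)}: the $i\geq 2$ case of $[\bx_i,\by_0]$ reads $[\bx_i,\by_0]=\bx_{i+1}+2\by_i-4\bx_i-8\by_{i-1}$, so $[\bx_i,\by_0]-2\by_i+4\bx_i+8\by_{i-1}=\bx_{i+1}$. For \eqref{pf:bx i+1(2)}: the $i\geq 2$ case of $[\bx_i,\bz_0]$ reads $[\bx_i,\bz_0]=\bx_{i+1}-2\bz_i+4\bx_i-8\bz_{i-1}$, so $[\bx_i,\bz_0]+2\bz_i-4\bx_i+8\bz_{i-1}=\bx_{i+1}$. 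For \eqref{pf:by i} and \eqref{pf:bz i}: using the $i\ge 2$ forms of $[\bx_i,\by_0]$, $[\bx_i,\bz_0]$ and $[\by_{i-1},\bz_0]=\bz_i-\by_i+4\bz_{i-1}-4\by_{i-1}$ (from \eqref{eq: [y,z] 0,0}--\eqref{eq: [y,z] i,j geq 1}), we get
\[
	\tfrac14[\bx_i,\by_0]-\tfrac14[\bx_i,\bz_0]
	=\tfrac12\by_i+\tfrac12\bz_i-2\bx_i-2\by_{i-1}-2\bz_{i-1},
\]
and subtracting $\tfrac12[\by_{i-1},\bz_0]=\tfrac12\bz_i-\tfrac12\by_i+2\bz_{i-1}-2\by_{i-1}$ gives $\by_i-2\bx_i$, whence \eqref{pf:by i}. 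Using instead $[\by_0,\bz_{i-1}]=\bz_i-\by_i-4\bz_{i-1}+4\by_{i-1}$ and adding $\tfrac12[\by_0,\bz_{i-1}]$ gives $\bz_i-2\bx_i$, whence \eqref{pf:bz i}. The base case $i=1$ of \eqref{pf:by i}, \eqref{pf:bz i} is not asserted (those are covered by \eqref{pf:by 1}, \eqref{pf:bz 1}); for $i\geq 2$ the relations \eqref{eq: [x,y] ij geq 21}, \eqref{eq:[x,z] ij geq 21} are not needed since the second index is $0$. Finally, since $\sigma$ is injective, all identities hold in $\tet$.
\end{proof}
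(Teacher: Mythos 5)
Your proposal is correct and takes essentially the same route as the paper: both derive the recursions by substituting the bracket relations already established in Theorem \ref{2nd main thm} and rearranging (the paper eliminates $\bx_2$ and $\bz_1$ among \eqref{eq: [x,y] 1,0}, \eqref{eq:[x,z] 1,0}, \eqref{eq: [y,z] 0,0}, which is the same linear elimination you carry out, and treats the remaining cases ``similarly''). One small slip: in your displayed formula for $\tfrac14[\bx_i,\by_0]-\tfrac14[\bx_i,\bz_0]$ with $i\ge 2$ the last term should be $+2\bz_{i-1}$ rather than $-2\bz_{i-1}$; with that sign corrected, subtracting $\tfrac12[\by_{i-1},\bz_0]$ (resp.\ adding $\tfrac12[\by_0,\bz_{i-1}]$) does yield $\by_i-2\bx_i$ (resp.\ $\bz_i-2\bx_i$), so your conclusions \eqref{pf:by i} and \eqref{pf:bz i} stand as stated.
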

\begin{proof}
Lines \eqref{pf:bx 1(1)}, \eqref{pf:bx 1(2)} are obtained from \eqref{eq: [x,y] 0,0}, \eqref{eq:[x,z] 0,0}, respectively.
To get \eqref{pf:by 1}, first solve \eqref{eq: [x,y] 1,0} for $\bx_2$ and use this equation to eliminate $\bx_2$ in \eqref{eq:[x,z] 1,0}. 
Subsequently, simplify the result to get
\begin{equation}\label{pf:[x1,z0]-[x1-y0]}
	[\bx_1,\bz_0] - [\bx_1, \by_0] = 8\bx_1 - 2\by_1 - 2\bz_1 + 8\by_0 - 8 \bz_0.
\end{equation}
Eliminate $\bz_1$ in \eqref{pf:[x1,z0]-[x1-y0]} using \eqref{eq: [y,z] 0,0} and simplify the result.
Then we obtain \eqref{pf:by 1}. 
Moreover, use \eqref{eq: [y,z] 0,0} and \eqref{pf:by 1} to get \eqref{pf:bz 1}. 
Lines \eqref{pf:bx 2(1)}--\eqref{pf:bz i} are similarly obtained.
\end{proof}

\section{Transition Matrices}\label{sec:TM}

Recall the Onsager subalgebra $\mathcal{O}$ of $\tet$ and recall the Lie algebra isomorphism $\sigma: \tet \to L(\sl2)^+$ in Lemma \ref{def: sigma}.
In Propositions \ref{prop:basisOsigma} and \ref{prop:Osigma}, we displayed two bases for $\mathcal{O}^\sigma$:
\begin{align}
	&& && &a_{2i}^\sigma, && b_{2i}^\sigma, && a_{2i+1}^\sigma  && (i\in \mathbb{N}_0), \label{1st basis Osigma} && &&\\
	&& && &\bx_{i}, && \by_{i}, && \bz_{i} &&  (i\in \mathbb{N}_0).\label{2nd basis Osigma} && && 
\end{align}
\noindent
In this section, we discuss the transition matrices between these two bases.
First, we give the transition matrix from \eqref{1st basis Osigma} to \eqref{2nd basis Osigma} in the following lemma.

\begin{lemma}
For $i \in \mathbb{N}_0$ we have
\begin{align}
	\boldsymbol{x}_{i} & = a^\sigma_{2i} + 4 b^\sigma_{2i-2},\label{eq(1):bx sigma}\\
	\boldsymbol{y}_{i} & = a^\sigma_{2i+1} + 2a^\sigma_{2i}+4a^\sigma_{2i-1}-2b^\sigma_{2i},\label{eq(2):bx sigma}\\
	\boldsymbol{z}_{i} & = a^\sigma_{2i+1} + 2a^\sigma_{2i} - 4a^\sigma_{2i-1} + 2b^\sigma_{2i},\label{eq(3):bx sigma}
\end{align}
where $a^\sigma_{-1} :=0$, $b^\sigma_{-2}:=0$.
\end{lemma}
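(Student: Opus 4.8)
The plan is to observe that this lemma is essentially a restatement of Definition \ref{Def:x_ij like in O} after applying the isomorphism $\sigma$, so the proof should be very short. Recall that in Definition \ref{Def:x_ij like in O} the elements $\bx_i, \by_i, \bz_i$ of $\mathcal{O} \subseteq \tet$ were defined by
\begin{align*}
	\bx_{i} & = a_{2i} + 4 b_{2i-2},\\
	\by_{i} & = a_{2i+1} + 2a_{2i}+4a_{2i-1}-2b_{2i},\\
	\bz_{i} & = a_{2i+1} + 2a_{2i} - 4a_{2i-1} + 2b_{2i},
\end{align*}
with the conventions $a_{-1}:=0$ and $b_{-2}:=0$. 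By Notation \ref{notation:x,y,z}, throughout the paper we reuse the symbols $\bx_i, \by_i, \bz_i$ for the $\sigma$-images of these elements, i.e.\ for $\bx_i^\sigma, \by_i^\sigma, \bz_i^\sigma$.

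The key step is simply to apply the Lie algebra isomorphism $\sigma: \tet \to L(\sl2)^+$ to each of the three defining equations above. Since $\sigma$ is $\mathbb{F}$-linear, it sends $a_{2i} + 4b_{2i-2}$ to $a_{2i}^\sigma + 4 b_{2i-2}^\sigma$, and likewise for the other two. The conventions $a_{-1}:=0$ and $b_{-2}:=0$ in $\tet$ translate under $\sigma$ to $a_{-1}^\sigma := 0$ and $b_{-2}^\sigma := 0$, since $\sigma$ is a linear bijection. This yields exactly \eqref{eq(1):bx sigma}--\eqref{eq(3):bx sigma}. One should also note that $a_{2i}^\sigma, b_{2i}^\sigma, a_{2i+1}^\sigma$ is a basis for $\mathcal{O}^\sigma$ by Proposition \ref{prop:basisOsigma} and $\bx_i, \by_i, \bz_i$ is a basis for $\mathcal{O}^\sigma$ by Proposition \ref{prop:Osigma}, so these equations do genuinely describe the transition matrix from \eqref{1st basis Osigma} to \eqref{2nd basis Osigma}.

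There is no real obstacle here; the only thing to be careful about is the bookkeeping of the edge conventions ($a_{-1}^\sigma = 0$, $b_{-2}^\sigma = 0$) so that the formulas are correct at $i = 0$ and $i = 1$. In particular, at $i=0$ the formula for $\bx_0$ reads $\bx_0 = a_0^\sigma + 4b_{-2}^\sigma = a_0^\sigma = x_{12}^\sigma = x\otimes 1$, which is consistent with \eqref{lem: bx pi form}; and at $i=0$ the formulas for $\by_0$ and $\bz_0$ reduce to $a_1^\sigma + 2a_0^\sigma - 2b_0^\sigma$ and $a_1^\sigma + 2a_0^\sigma + 2b_0^\sigma$ respectively (using $a_{-1}^\sigma = 0$), which one checks against \eqref{lem: by pi form}, \eqref{lem: bz pi form} using Lemma \ref{lem:G,H U_i}. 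Thus the proof is: apply $\sigma$ to the defining equations \eqref{eq:a(i) tet}--\eqref{eq:c(i) tet} and use linearity.

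\begin{proof}
Apply the Lie algebra isomorphism $\sigma: \tet \to L(\sl2)^+$ to the defining equations \eqref{eq:a(i) tet}--\eqref{eq:c(i) tet}. Since $\sigma$ is $\mathbb{F}$-linear, equation \eqref{eq:a(i) tet} gives $\bx_i^\sigma = a_{2i}^\sigma + 4 b_{2i-2}^\sigma$, equation \eqref{eq:b(i) tet} gives $\by_i^\sigma = a_{2i+1}^\sigma + 2a_{2i}^\sigma + 4a_{2i-1}^\sigma - 2 b_{2i}^\sigma$, and equation \eqref{eq:c(i) tet} gives $\bz_i^\sigma = a_{2i+1}^\sigma + 2a_{2i}^\sigma - 4a_{2i-1}^\sigma + 2 b_{2i}^\sigma$. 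The conventions $a_{-1}:=0$ and $b_{-2}:=0$ in $\tet$ translate to $a_{-1}^\sigma := 0$ and $b_{-2}^\sigma := 0$ because $\sigma$ is a linear bijection. Recalling from Notation \ref{notation:x,y,z} that $\bx_i, \by_i, \bz_i$ denote the $\sigma$-images of the corresponding elements of $\mathcal{O}$, these are precisely \eqref{eq(1):bx sigma}--\eqref{eq(3):bx sigma}.
\end{proof}
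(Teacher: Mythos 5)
Your proof is correct and matches the paper's, which simply says the lemma is immediate from the defining equations \eqref{eq:a(i) tet}--\eqref{eq:c(i) tet} together with the convention of Notation \ref{notation:x,y,z}; applying $\sigma$ linearly and tracking the conventions $a_{-1}^\sigma=0$, $b_{-2}^\sigma=0$ is exactly the intended argument. The extra consistency checks at $i=0$ are fine but not needed.
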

\begin{proof}
Immediate from \eqref{eq:a(i) tet}--\eqref{eq:c(i) tet}.
\end{proof}

\noindent
Next, we give the transition matrix from \eqref{2nd basis Osigma} to \eqref{1st basis Osigma} in the following lemma.

\begin{lemma}
For $i \in \mathbb{N}_0$ we have

\begin{align}
	a^\sigma_{2i} & = \sum^{\lfloor i/2 \rfloor}_{k=0} 4^{2k}\left(\bx_{i-2k} + \by_{i-2k-1} - \bz_{i-2k-1} - 4\by_{i-2k-2} - 4\bz_{i-2k-2}\right), \label{tran:a_2i->x(i)}\\
	b^\sigma_{2i} & = \sum^{\lfloor i/2 \rfloor}_{k=0} 4^{2k}
		\left(-\frac{1}{4}\by_{i-2k} +\frac{1}{4}\bz_{i-2k} -4\bx_{i-2k-1} + \by_{i-2k-1} + \bz_{i-2k-1} \right), \label{tran:b_2i->x(i)} \\
	a^\sigma_{2i+1} &= 
		 \frac{1}{2}\by_{i} + \frac{1}{2}\bz_{i} 
		- 2\sum^{\lfloor i/2 \rfloor}_{k=0} 4^{2k}\left(\bx_{i-2k} + \by_{i-2k-1} - \bz_{i-2k-1} - 4\by_{i-2k-2} - 4\bz_{i-2k-2}\right),\label{tran:a_2i+1->x(i)}
\end{align}
where $\by_{-1}:=0$, $\by_{-2}:=0$, $\bz_{-1}:=0$, $\bz_{-2}:=0$.
\end{lemma}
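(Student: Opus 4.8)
The plan is to invert the transition relations of the preceding lemma, namely $\bx_i = a^\sigma_{2i} + 4b^\sigma_{2i-2}$, $\by_i = a^\sigma_{2i+1} + 2a^\sigma_{2i} + 4a^\sigma_{2i-1} - 2b^\sigma_{2i}$, and $\bz_i = a^\sigma_{2i+1} + 2a^\sigma_{2i} - 4a^\sigma_{2i-1} + 2b^\sigma_{2i}$, by reducing everything to a single short recursion. First I would take the combinations $\by_i+\bz_i$, $\bz_i-\by_i$, and $\bx_i$ to extract the three local inversion identities
\[
	a^\sigma_{2i+1} = \tfrac12(\by_i+\bz_i) - 2a^\sigma_{2i}, \qquad
	b^\sigma_{2i} = \tfrac14(\bz_i-\by_i) + 2a^\sigma_{2i-1}, \qquad
	a^\sigma_{2i} = \bx_i - 4b^\sigma_{2i-2},
\]
valid for $i\in\mathbb{N}_0$ under the conventions $a^\sigma_{-1}:=0$, $b^\sigma_{-2}:=0$. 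Each of these expresses one old basis vector in terms of the new basis vectors plus strictly earlier members of the old basis, which is what makes a recursive solution possible.

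Next I would chain these three identities. Substituting the second identity (with index $i-1$) and then the first (with index $i-2$) into the third and simplifying yields the closed recursion
\[
	a^\sigma_{2i} = c_i + 16\,a^\sigma_{2i-4}, \qquad c_i := \bx_i + \by_{i-1} - \bz_{i-1} - 4\by_{i-2} - 4\bz_{i-2},
\]
for all $i\in\mathbb{N}_0$, where we extend the conventions by $\bx_j:=0$, $\by_j:=0$, $\bz_j:=0$, $a^\sigma_j:=0$ for $j<0$; the cases $i=0,1$ are checked directly against the identities above. Since the index drops by $4$ at each step, iterating gives $a^\sigma_{2i} = \sum_{k=0}^{\lfloor i/2\rfloor} 4^{2k}c_{i-2k}$, which is precisely \eqref{tran:a_2i->x(i)}. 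Formula \eqref{tran:a_2i+1->x(i)} then follows immediately by feeding this expression into $a^\sigma_{2i+1} = \tfrac12(\by_i+\bz_i) - 2a^\sigma_{2i}$.

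It remains to handle \eqref{tran:b_2i->x(i)}. Combining $b^\sigma_{2i} = \tfrac14(\bz_i-\by_i) + 2a^\sigma_{2i-1}$ with $a^\sigma_{2i-1} = \tfrac12(\by_{i-1}+\bz_{i-1}) - 2a^\sigma_{2i-2}$ gives $b^\sigma_{2i} = -\tfrac14\by_i + \tfrac14\bz_i + \by_{i-1} + \bz_{i-1} - 4a^\sigma_{2i-2}$, into which I would substitute the already-proved formula for $a^\sigma_{2i-2}$ and re-index. Equivalently — and this is perhaps the cleanest route — one checks directly from the above relations and the $a^\sigma$-recursion that $b^\sigma_{2i}$ satisfies the parallel recursion $b^\sigma_{2i} = d_i + 16\,b^\sigma_{2i-4}$ with $d_i := -\tfrac14\by_i + \tfrac14\bz_i - 4\bx_{i-1} + \by_{i-1} + \bz_{i-1}$, and then iterates to obtain $b^\sigma_{2i} = \sum_{k=0}^{\lfloor i/2\rfloor} 4^{2k}d_{i-2k}$, which is \eqref{tran:b_2i->x(i)}.

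The algebra here is entirely routine; the main obstacle is bookkeeping — keeping the boundary conventions for negative indices consistent across the preceding lemma and this one so the recursions hold uniformly (including the small cases $i=0,1$), and correctly re-indexing the geometric-type sum when passing from the formula for $a^\sigma_{2i}$ to those for $b^\sigma_{2i}$ and $a^\sigma_{2i+1}$. No deeper structural input is needed beyond the transition relations already established.
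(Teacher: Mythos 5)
Your proposal is correct and follows essentially the same route as the paper: your recursion $a^\sigma_{2i}=c_i+16\,a^\sigma_{2i-4}$ (and its analogue for $b^\sigma_{2i}$) is exactly the paper's key identity $\boldsymbol{e}_i=a^\sigma_{2i}-16\,a^\sigma_{2i-4}$, obtained there by substituting the expressions for $\bx_i,\by_i,\bz_i$ rather than by chaining the inverted relations, and both arguments then telescope the sum and deduce \eqref{tran:a_2i+1->x(i)} from $a^\sigma_{2i+1}=\tfrac12(\by_i+\bz_i)-2a^\sigma_{2i}$. The only difference is this direction of verification, which is immaterial.
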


\begin{proof}
We show \eqref{tran:a_2i->x(i)}. 
For notational convenience, we abbreviate the expression on the right side of \eqref{tran:a_2i->x(i)} as follows.
\begin{equation}\label{lem:eq E(i)}
	\boldsymbol{e}_{i}=\bx_{i} + \by_{i-1} - \bz_{i-1} - 4\by_{i-2} - 4\bz_{i-2} \qquad (i \in \mathbb{N}_0).
\end{equation}
Then, we will show
\begin{equation}\label{pf: Gi=sum Ei}
		a^\sigma_{2i} = \sum^{\lfloor i/2 \rfloor}_{k=0} 4^{2k} \boldsymbol{e}_{i-2k}.
\end{equation}
Evaluate the right-hand side of \eqref{lem:eq E(i)} using \eqref{eq(1):bx sigma}--\eqref{eq(3):bx sigma} and simplify the result to obtain
\begin{equation}\label{pf: eq E(i) (1)}
	\boldsymbol{e}_{i} = a^\sigma_{2i} - 16 a^\sigma_{2i-4},
\end{equation}
where $a^\sigma_{-2}:=0$ and $a^\sigma_{-4}:=0$.
Evaluate the right-hand side of \eqref{pf: Gi=sum Ei} using \eqref{pf: eq E(i) (1)} to obtain $a^\sigma_{2i}$, as desired. 
Therefore, we have shown \eqref{tran:a_2i->x(i)}.
Equation \eqref{tran:b_2i->x(i)} is obtained in a similar manner to \eqref{tran:a_2i->x(i)}.

\noindent
Next, we show \eqref{tran:a_2i+1->x(i)}. 
Adding both sides of equations \eqref{eq(2):bx sigma} and \eqref{eq(3):bx sigma} and solving it for $a^\sigma_{2i+1}$, we have
\begin{equation}\label{eq:a_2i+1 in pf}
	a^\sigma_{2i+1} =  \frac{1}{2}\by_{i} + \frac{1}{2}\bz_{i} - 2a^\sigma_{2i}.
\end{equation}
Use \eqref{tran:a_2i->x(i)} and \eqref{eq:a_2i+1 in pf} to get \eqref{tran:a_2i+1->x(i)}.
The proof is complete.
\end{proof}

We have considered two bases for $\mathcal{O}^\sigma$, as shown in \eqref{1st basis Osigma} and \eqref{2nd basis Osigma}.
We introduce a third basis for $\mathcal{O}^\sigma$, which naturally arises from $\mathcal{O}^\sigma$.
Recall the direct sum \eqref{decomp O sigma} of $\mathcal{O}^\sigma$.

\begin{proposition}\label{lem:basis Delta}
The Lie algebra $\mathcal{O}^\sigma$ has a basis
\begin{equation}\label{eq:basis Delta}
	x\otimes (2t-1)^{i}, \quad y\otimes t(2t-1)^{i}, \quad z \otimes (1-t)(2t-1)^{i} \qquad \qquad (i \in \mathbb{N}_0).
\end{equation}
\end{proposition}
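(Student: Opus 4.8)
The plan is to deduce this directly from the direct sum decomposition \eqref{decomp O sigma} of $\mathcal{O}^\sigma$ established in Lemma \ref{lem: decomp O}, namely $\mathcal{O}^\sigma = x\otimes \mathbb{F}[t] + y\otimes t\mathbb{F}[t] + z\otimes(1-t)\mathbb{F}[t]$ (direct sum). Since the three summands are independent, it suffices to exhibit a basis for each of them and then take the union. The natural candidates are exactly the three families in \eqref{eq:basis Delta}, so the whole task reduces to three one-line claims: $\{(2t-1)^i \mid i\in\mathbb{N}_0\}$ is a basis for $\mathbb{F}[t]$; $\{t(2t-1)^i \mid i\in\mathbb{N}_0\}$ is a basis for $t\mathbb{F}[t]$; and $\{(1-t)(2t-1)^i \mid i\in\mathbb{N}_0\}$ is a basis for $(1-t)\mathbb{F}[t]$.

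The key step is a degree/triangularity argument in $\mathbb{F}[t]$. For $i\in\mathbb{N}_0$ the polynomial $p_i=(2t-1)^i$ (recall \eqref{eq:poly p_i}) has degree exactly $i$ with leading term $2^i t^i$, and $2^i\neq 0$ in $\mathbb{F}$ since $\operatorname{char}\mathbb{F}=0$. Hence the transition matrix between $\{p_i\}_{i=0}^{n}$ and $\{1,t,\dots,t^n\}$ is upper triangular with nonzero diagonal entries for every $n$, so $\{p_i\mid i\in\mathbb{N}_0\}$ is a basis for $\mathbb{F}[t]$. Multiplying a basis of $\mathbb{F}[t]$ by the fixed nonzero element $t$ (respectively $1-t$) of the integral domain $\mathcal{A}$ yields a basis of $t\mathbb{F}[t]$ (respectively $(1-t)\mathbb{F}[t]$), because multiplication by a nonzero element of $\mathcal{A}$ is an injective $\mathbb{F}$-linear map onto these subspaces. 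Tensoring with the fixed basis vectors $x,y,z$ of $\sl2$ then turns these into bases for $x\otimes\mathbb{F}[t]$, $y\otimes t\mathbb{F}[t]$, $z\otimes(1-t)\mathbb{F}[t]$ respectively.

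Combining the three bases via the direct sum \eqref{decomp O sigma} gives that \eqref{eq:basis Delta} is a basis for $\mathcal{O}^\sigma$, as claimed. I do not anticipate a serious obstacle here; the only point requiring the hypothesis $\operatorname{char}\mathbb{F}=0$ is the nonvanishing of the leading coefficients $2^i$, and the only structural input is the already-proved direct sum \eqref{decomp O sigma} together with the fact that $\mathcal{A}$ is an integral domain. Alternatively, one could observe that \eqref{eq:basis Delta} is, up to the nonzero scalars $(-1)^i 4^i$ and $(-1)^{i+1}4^{i+1}$, precisely the leading-term data of $\bx_i$, $\by_i$, $\bz_i$ recorded in \eqref{eq:bx(i)}--\eqref{eq:by,bz(i)}, so the statement also follows from Proposition \ref{prop:Osigma} by the same triangularity principle applied to the change of basis between \eqref{2nd basis Osigma} and \eqref{eq:basis Delta}.
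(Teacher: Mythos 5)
Your proof is correct and follows essentially the same route as the paper: the paper also deduces the result from the direct sum \eqref{decomp O sigma}, simply observing that the three displayed families are bases of $x\otimes\mathbb{F}[t]$, $y\otimes t\mathbb{F}[t]$, $z\otimes(1-t)\mathbb{F}[t]$, while you spell out that observation via the degree/leading-coefficient argument for $(2t-1)^i$. No gaps.
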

\begin{proof}
One can readily observes that $\{x\otimes (2t-1)^{i}\}_{i \in \mathbb{N}_0}$, $\{y\otimes t(2t-1)^{i}\}_{i \in \mathbb{N}_0}$, and $\{z\otimes (1-t)(2t-1)^{i}\}_{i \in \mathbb{N}_0}$ are bases for $x\otimes \mathbb{F}[t]$, $y\otimes t\mathbb{F}[t]$, and $z\otimes (1-t)\mathbb{F}[t]$, respectively. 
Given these observations and the fact that the sum \eqref{decomp O sigma} is direct, the result follows.
\end{proof}

Next, we discuss a relationship between the basis \eqref{eq:basis Delta} and the other two bases, \eqref{1st basis Osigma} and \eqref{2nd basis Osigma}. 
From Lemma \ref{lem:x,y,z F[t]} we observe
\begin{align*}
	\operatorname{Span}\{\bx_{i} \mid i\in \mathbb{N}_0\} & = \operatorname{Span}\{x\otimes (2t-1)^i \mid i\in \mathbb{N}_0\}, \\
	\operatorname{Span}\{\by_{i} \mid i\in \mathbb{N}_0\} & = \operatorname{Span}\{y\otimes t(2t-1)^i \mid i\in \mathbb{N}_0\}, \\
	\operatorname{Span}\{\bz_{i} \mid i\in \mathbb{N}_0\} & = \operatorname{Span}\{z\otimes (1-t)(2t-1)^i \mid i\in \mathbb{N}_0\}.
\end{align*}

\begin{lemma}
The following {\rm{(i)}}--{\rm{(iii)}} hold.
\begin{enumerate}[label=(\roman*), font=\normalfont]
	\item $X^\sigma_{12}\cap\mathcal{O}^\sigma$ has a basis $\{ x\otimes (2t-1)^i\}_{i \in \mathbb{N}_0}$.
	\item $X^\sigma_{23}\cap\mathcal{O}^\sigma$ has a basis $\{y\otimes t(2t-1)^{i}\}_{i \in \mathbb{N}_0}$.
	\item $X^\sigma_{31}\cap\mathcal{O}^\sigma$ has a basis $\{z\otimes (1-t)(2t-1)^{i}\}_{i \in \mathbb{N}_0}$.
\end{enumerate}
\end{lemma}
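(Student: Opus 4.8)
The plan is to obtain this lemma as an immediate consequence of two facts already available: the identification of $X^\sigma_{ij}\cap\mathcal{O}^\sigma$ from Lemma \ref{lem:Lij cap O}, and a degree count for the polynomials $(2t-1)^i$. First I would recall Lemma \ref{lem:Lij cap O}, which gives $X^\sigma_{12}\cap\mathcal{O}^\sigma = x\otimes\mathbb{F}[t]$, $X^\sigma_{23}\cap\mathcal{O}^\sigma = y\otimes t\mathbb{F}[t]$, and $X^\sigma_{31}\cap\mathcal{O}^\sigma = z\otimes(1-t)\mathbb{F}[t]$. Hence it suffices to show that the three indicated families are bases for $x\otimes\mathbb{F}[t]$, $y\otimes t\mathbb{F}[t]$, and $z\otimes(1-t)\mathbb{F}[t]$, respectively.

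Next I would settle the polynomial statement. Since $\mathbb{F}$ has characteristic $0$, the polynomial $(2t-1)^i$ has degree exactly $i$ (its leading coefficient $2^i$ is nonzero); thus $\{(2t-1)^i\}_{i\in\mathbb{N}_0}$ is a set of polynomials of pairwise distinct degrees $0,1,2,\ldots$, so it is linearly independent and its span contains a polynomial of every nonnegative degree, making it a basis for $\mathbb{F}[t]$. Multiplying each member by $t$ shifts every degree up by one, so $\{t(2t-1)^i\}_{i\in\mathbb{N}_0}$ has degrees $1,2,3,\ldots$ and is a basis for $t\mathbb{F}[t]$; the same argument with the factor $1-t$ shows $\{(1-t)(2t-1)^i\}_{i\in\mathbb{N}_0}$ is a basis for $(1-t)\mathbb{F}[t]$. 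This is precisely the observation made in the proof of Proposition \ref{lem:basis Delta}, so I could alternatively just cite it. Since tensoring on the left with the fixed vector $x$, $y$, or $z$ is a linear isomorphism of $\mathbb{F}[t]$ (resp. $t\mathbb{F}[t]$, $(1-t)\mathbb{F}[t]$) onto $x\otimes\mathbb{F}[t]$ (resp. $y\otimes t\mathbb{F}[t]$, $z\otimes(1-t)\mathbb{F}[t]$), it carries each of these bases to a basis.

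Finally I would assemble the two steps: part (i) follows from $X^\sigma_{12}\cap\mathcal{O}^\sigma = x\otimes\mathbb{F}[t]$ together with the fact that $\{x\otimes(2t-1)^i\}_{i\in\mathbb{N}_0}$ is a basis for $x\otimes\mathbb{F}[t]$, and parts (ii), (iii) follow in the same way (or, alternatively, by applying the order-three automorphism $\prime$ of $\L$ from Lemma \ref{lem:auto L(sl2)+}, which cyclically permutes the three statements exactly as in the proof of Lemma \ref{lem:Lij cap O}). I do not anticipate a genuine obstacle; the only point needing a moment's care is the degree bookkeeping ensuring that $(2t-1)^i$ — and its products with $t$ and with $1-t$ — sweep out all of $\mathbb{F}[t]$, $t\mathbb{F}[t]$, and $(1-t)\mathbb{F}[t]$, which is where characteristic $0$ enters, guaranteeing the leading coefficient $2^i$ does not vanish.
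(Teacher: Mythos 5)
Your proposal is correct and follows essentially the same route as the paper: the paper's proof simply cites Lemma \ref{lem:Lij cap O} (which identifies the three intersections as $x\otimes\mathbb{F}[t]$, $y\otimes t\mathbb{F}[t]$, $z\otimes(1-t)\mathbb{F}[t]$), with the fact that $\{(2t-1)^i\}_{i\in\mathbb{N}_0}$ and its products with $t$ and $1-t$ are bases of those polynomial spaces already noted in the proof of Proposition \ref{lem:basis Delta}. Your explicit degree-counting justification of that polynomial fact is exactly the ``readily observed'' step the paper leaves implicit, so there is no substantive difference.
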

\begin{proof}
By Lemma \ref{lem:Lij cap O}.
\end{proof}

We give the transition matrix from \eqref{2nd basis Osigma} to \eqref{eq:basis Delta}:
\begin{lemma}\label{lem:xyz otimes (2t-1)}
For $i\in \mathbb{N}_0$, we have
\begin{align}
	x\otimes (2t-1)^i
	& = \sum^{\lfloor i/2 \rfloor}_{j=0} \frac{\bx_{i-2j}}{(-4)^{i-2j}}, \label{tran:x(i)->pi}\\
	y\otimes t(2t-1)^i
	& = \sum^i_{j=0} \frac{\by_{j}}{(-4)^{j+1}},  \label{tran:y(i)->pi}\\
	z\otimes (1-t)(2t-1)^i
	& = (-1)^{i+1}\sum^i_{j=0} \frac{\bz_{j}}{4^{j+1}}. \label{tran:z(i)->pi}
\end{align}
\end{lemma}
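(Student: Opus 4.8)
The plan is to verify each of the three identities \eqref{tran:x(i)->pi}--\eqref{tran:z(i)->pi} by a direct computation in vector-valued form, substituting the explicit expressions \eqref{eq:bx(i)}, \eqref{eq:by,bz(i)} for $\bx_i$, $\by_i$, $\bz_i$ in terms of the polynomials $p_i=(2t-1)^i$ (recall Notation \ref{notation:x,y,z}, so $\bx_i,\by_i,\bz_i$ here denote their $\sigma$-images, which live in the direct sum \eqref{decomp O sigma}). In each case the right-hand side collapses to a telescoping sum.

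For \eqref{tran:x(i)->pi}: from \eqref{eq:bx(i)} we have $\bx_m=(-1)^m4^m\,x\otimes(p_m-p_{m-2})$, hence $\bx_m/(-4)^m=x\otimes(p_m-p_{m-2})$. Summing over $m=i,i-2,i-4,\dots$ down to the smallest nonnegative such index, the sum telescopes to $x\otimes\bigl(p_i-p_{i-2\lfloor i/2\rfloor-2}\bigr)$; since the trailing index $i-2\lfloor i/2\rfloor-2$ is $-2$ (for $i$ even) or $-1$ (for $i$ odd), the convention $p_{-1}=p_{-2}=0$ from \eqref{eq:poly p_i} gives $x\otimes p_i$, as claimed. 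For \eqref{tran:y(i)->pi}: from \eqref{eq:by,bz(i)}, $\by_j=(-1)^{j+1}4^{j+1}\,y\otimes t(p_j-p_{j-1})$, so $\by_j/(-4)^{j+1}=y\otimes t(p_j-p_{j-1})$, and $\sum_{j=0}^i\by_j/(-4)^{j+1}=y\otimes t(p_i-p_{-1})=y\otimes t\,p_i$ by telescoping.

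For \eqref{tran:z(i)->pi}: from \eqref{eq:by,bz(i)}, $\bz_j/4^{j+1}=(-1)^{j+1}\,z\otimes(1-t)(p_j+p_{j-1})$, so the stated sum equals $z\otimes(1-t)\cdot(-1)^{i+1}\sum_{j=0}^i(-1)^{j+1}(p_j+p_{j-1})$. Here $p_j$ enters the $j$-th summand with sign $(-1)^{j+1}$ and the $(j+1)$-th summand with sign $(-1)^{j+2}$, so all interior powers cancel and only $(-1)^{i+1}p_i$ survives (using $p_{-1}=0$ for the bottom end). Multiplying by the prefactor $(-1)^{i+1}$ yields $z\otimes(1-t)\,p_i$, completing the verification.

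All three computations are routine once the vector-valued forms are inserted; there is no substantial obstacle. The only point requiring a little care is the bookkeeping of the lower summation limit and the sign parity in \eqref{tran:x(i)->pi} and \eqref{tran:z(i)->pi}, which the conventions $p_{-1}=p_{-2}=0$ handle uniformly without a case split. (An alternative would be a short induction on $i$ with separate even/odd cases, but the telescoping argument is cleaner and is the one I would write up.)
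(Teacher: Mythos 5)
Your proof is correct and follows essentially the same route as the paper: both rest on the explicit vector-valued formulas \eqref{eq:bx(i)}, \eqref{eq:by,bz(i)} for $\bx_i$, $\by_i$, $\bz_i$, the paper unwinding the resulting two-step recursion $x\otimes(2t-1)^i = x\otimes(2t-1)^{i-2} + \bx_i/(-4)^i$ (with an even/odd case split) while you sum the same identities directly and let the sum telescope. The conventions $p_{-1}=p_{-2}=0$ handle the boundary terms exactly as you state, so there is no gap.
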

\begin{proof}
We show \eqref{tran:x(i)->pi}.
By Lemma \ref{lem:xij-like elts in L(sl2)+}(i) and \eqref{eq:a(i) tet}, we have
\begin{equation}\label{eq(3): x otimes (2t-1)i}
	x\otimes1=\bx_{0}, \qquad \qquad x\otimes (2t-1)=-\frac{\bx_{1}}{4}, 
\end{equation}
and
\begin{equation}\label{eq(1): x otimes (2t-1)i}
	x\otimes (2t-1)^i  = x\otimes (2t-1)^{i-2} + \frac{1}{(-4)^i}\bx_{i}, \qquad \qquad i=2,3,4,\ldots.
\end{equation}
Evaluate the right-hand side of \eqref{eq(1): x otimes (2t-1)i} and simplify the result to obtain
\begin{equation}\label{eq(2): x otimes (2t-1)i}
x\otimes (2t-1)^i =
\begin{cases}
	\displaystyle  x\otimes 1 + \sum^{(i-2)/2}_{j=0} \frac{1}{(-4)^{i-2j}}\bx_{i-2j} & \qquad \text{if} \quad i=2,4,6,8,\ldots, \\
	\displaystyle  x\otimes (2t-1) + \sum^{(i-3)/2}_{j=0} \frac{1}{(-4)^{i-2j}}\bx_{i-2j} & \qquad \text{if} \quad i=3,5,7,9,\ldots.
\end{cases}
\end{equation}
Combining \eqref{eq(3): x otimes (2t-1)i} and \eqref{eq(2): x otimes (2t-1)i} we obtain \eqref{tran:x(i)->pi}. 
Similarly, we obtain \eqref{tran:y(i)->pi}, \eqref{tran:z(i)->pi}.
\end{proof}

\noindent
We give the transition matrix from \eqref{eq:basis Delta} to \eqref{2nd basis Osigma}:
\begin{lemma}
For $i\in \mathbb{N}_0$, we have
\begin{align*}
	\bx_{i} 
	& = 	(-1)^i4^i x\otimes (2t-1)^i + (-1)^{i+1}4^i x\otimes (2t-1)^{i-2}\\
	\by_{i} 
	& = 	(-1)^{i+1}4^{i+1} y \otimes t(2t-1)^{i} + (-1)^{i}4^{i+1} y\otimes  t(2t-1)^{i-1},\\
	\bz_{i}
	& = (-1)^{i+1}4^{i+1} z\otimes (1-t)(2t-1)^i + (-1)^{i+1}4^{i+1} z \otimes (1-t)(2t-1)^{i-1},
\end{align*}
where $(2t-1)^{-1} := 0$ and $(2t-1)^{-2} := 0$.
\end{lemma}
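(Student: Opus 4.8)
The plan is to read each of the three identities directly off the vector-valued expressions for $\bx_i$, $\by_i$, $\bz_i$ recorded in \eqref{eq:bx(i)} and \eqref{eq:by,bz(i)}, together with the abbreviation $p_i = (2t-1)^i$ from \eqref{eq:poly p_i} and the conventions $p_{-1} = p_{-2} = 0$. Since $\L$ is an $\mathbb{F}$-vector space and the identification \eqref{eq:vvf} is $\mathbb{F}$-linear in each entry, a vector whose single nonzero entry is a sum of two Laurent polynomials splits as the corresponding sum of two elements of $\L$; this distributive law is the only tool needed.

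First I would handle $\bx_i$. By \eqref{eq:bx(i)}, $\bx_i = (-1)^i 4^i\,\bigl(x\otimes(p_i - p_{i-2})\bigr)$. Distributing the tensor over the difference gives $\bx_i = (-1)^i 4^i\, x\otimes p_i - (-1)^i 4^i\, x\otimes p_{i-2}$, and rewriting $p_i = (2t-1)^i$, $p_{i-2} = (2t-1)^{i-2}$ and $-(-1)^i = (-1)^{i+1}$ yields the first displayed formula. When $i \in \{0,1\}$ the second term vanishes under the convention $(2t-1)^{i-2} := 0$, which is exactly compatible with $p_{-1} = p_{-2} = 0$.

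Next I would treat $\by_i$ and $\bz_i$ identically. From \eqref{eq:by,bz(i)}, $\by_i = (-1)^{i+1}4^{i+1}\,\bigl(y\otimes t(p_i - p_{i-1})\bigr)$; distributing over the difference and using $y\otimes t\,p_i = y\otimes t(2t-1)^i$ gives the second formula, the sign on the lower term coming from $-(-1)^{i+1} = (-1)^i$ and the $i = 0$ term vanishing under the stated convention. Likewise $\bz_i = (-1)^{i+1}4^{i+1}\,\bigl(z\otimes(1-t)(p_i + p_{i-1})\bigr)$ splits as a \emph{sum}, so both coefficients retain the sign $(-1)^{i+1}$, producing the third formula.

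No step presents a genuine obstacle: the whole argument is the distributive law for $\otimes$ applied to \eqref{eq:bx(i)}--\eqref{eq:by,bz(i)}, plus careful tracking of the identity $(-1)^{i+1} = -(-1)^i$ and of the edge cases $i = 0, 1$. The only point worth stating explicitly is that the conventions $(2t-1)^{-1} := 0$, $(2t-1)^{-2} := 0$ match $p_{-1} = p_{-2} = 0$, so the three formulas are uniformly valid for all $i \in \mathbb{N}_0$; one could equally well quote \eqref{eq:x12 in O}, \eqref{eq:x23 in O}, \eqref{eq:x31 in O} in place of \eqref{eq:bx(i)}--\eqref{eq:by,bz(i)} and argue in the same way.
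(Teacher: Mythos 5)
Your proposal is correct and follows the paper's own route: the paper proves this lemma simply by citing Lemma \ref{lem:xij-like elts in L(sl2)+} (equivalently the rewritten forms \eqref{eq:bx(i)}, \eqref{eq:by,bz(i)}), and your argument is exactly that reading-off, distributing $\otimes$ over the sum or difference of the $p_i$'s and tracking the sign $-(-1)^i=(-1)^{i+1}$ and the conventions $p_{-1}=p_{-2}=0$. Nothing further is needed.
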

\begin{proof}
By Lemma \ref{lem:xij-like elts in L(sl2)+}.
\end{proof}

Next, we give the transition matrix from \eqref{eq:basis Delta} to \eqref{1st basis Osigma}:
\begin{lemma}
For $i\in \mathbb{N}_0$, we have
\begin{align*}
	a_{2i}^\sigma 
	&= (-1)^i4^i\left(x\otimes(2t-1)^i + y\otimes t(2t-1)^{i-1} - z \otimes (1-t)(2t-1)^{i-1} \right),\\
	b_{2i}^\sigma 
	&= (-1)^i4^i\left( x\otimes(2t-1)^{i-1} +y\otimes t(2t-1)^i -z\otimes (1-t)(2t-1)^{i} \right), \\
	a_{2i+1}^\sigma 
	&= 2(-1)^{i+1}4^i \left( x\otimes (2t-1)^i + y\otimes t(2t-1)^{i} +z\otimes (1-t)(2t-1)^{i} \right),
\end{align*}
where $(2t-1)^{-1} := 0$.
\end{lemma}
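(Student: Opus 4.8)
The plan is to obtain the three identities by simply re-expressing the vector-valued forms of $a_{2i}^\sigma$, $b_{2i}^\sigma$, and $a_{2i+1}^\sigma$ that are already recorded in Lemma~\ref{lem:G,H U_i} in terms of the basis $\{x\otimes(2t-1)^i,\ y\otimes t(2t-1)^i,\ z\otimes(1-t)(2t-1)^i\}_{i\in\mathbb{N}_0}$ of $\mathcal{O}^\sigma$ from Proposition~\ref{lem:basis Delta}. Under the identification \eqref{eq:vvf}, a triple $(f,g,h)^\top$ stands for $x\otimes f+y\otimes g+z\otimes h$, so there is nothing to compute beyond matching coordinates and cleaning up one sign.

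Concretely, I would start from \eqref{eq:Gk(x_12)}, which reads $a_{2i}^\sigma=(-1)^i4^i\bigl((2t-1)^i,\ t(2t-1)^{i-1},\ (t-1)(2t-1)^{i-1}\bigr)^\top$. The first coordinate contributes $(-1)^i4^i\, x\otimes(2t-1)^i$ and the second contributes $(-1)^i4^i\, y\otimes t(2t-1)^{i-1}$; for the third I use $(t-1)(2t-1)^{i-1}=-(1-t)(2t-1)^{i-1}$, so it contributes $-(-1)^i4^i\, z\otimes(1-t)(2t-1)^{i-1}$, and summing gives the stated formula for $a_{2i}^\sigma$. Applying the identical bookkeeping to \eqref{eq:Hk(x_03)} yields the formula for $b_{2i}^\sigma$ (again a single sign flip in the $z$-component), and applying it to \eqref{eq:[x_03,Gk]} yields the formula for $a_{2i+1}^\sigma$; in the latter the third entry is already written as $(1-t)(2t-1)^i$, so no sign flip is needed there.

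The only point meriting a remark is the boundary case $i=0$: in Lemma~\ref{lem:G,H U_i} one interprets $(2t-1)^{-1}:=0$, which is exactly the convention in the present statement, so the $(2t-1)^{i-1}$-terms drop out at $i=0$ and the formulas reduce to $a_0^\sigma=x\otimes1$, $b_0^\sigma=y\otimes t+z\otimes(t-1)$, and $a_1^\sigma=-2\bigl(x\otimes1+y\otimes t+z\otimes(1-t)\bigr)$, recovering the $i=0$ values from Lemma~\ref{lem:G,H U_i}. There is essentially no obstacle here: the substance of this lemma was already established in Lemma~\ref{lem:G,H U_i}, and the present statement merely displays that data in the basis \eqref{eq:basis Delta}; the sole care needed is the elementary identity $t-1=-(1-t)$ in the $z$-components of $a_{2i}^\sigma$ and $b_{2i}^\sigma$.
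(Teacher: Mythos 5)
Your proposal is correct and matches the paper's approach: the paper proves this lemma simply by citing Lemma~\ref{lem:G,H U_i}, i.e., by reading off the vector-valued forms there in the basis of Proposition~\ref{lem:basis Delta}, with the sign flip $t-1=-(1-t)$ in the $z$-components and the convention $(2t-1)^{-1}:=0$ handling the boundary case, exactly as you describe.
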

\begin{proof}
By Lemma \ref{lem:G,H U_i}.
\end{proof}

\noindent
We give the transition matrix from \eqref{1st basis Osigma} to \eqref{eq:basis Delta}:

\begin{lemma}\label{lem:tran pi -> ai}
For $i\in \mathbb{N}_0$, we have
\begin{align*}
	x\otimes (2t-1)^i
	& = \sum^{\lfloor i/2 \rfloor}_{j=0} \frac{ a^\sigma_{2i-4j} + 4b^\sigma_{2i-4j-2} }{(-4)^{i-2j}} \\
	y\otimes t(2t-1)^i
	& = \sum^i_{j=0} \frac{a^\sigma_{2j+1} + 2a^\sigma_{2j}+4a^\sigma_{2j-1}-2b^\sigma_{2j}}{(-4)^{j+1}},  \\
	z\otimes (1-t)(2t-1)^i
	& = (-1)^{i+1}\sum^i_{j=0} \frac{a^\sigma_{2j+1} + 2a^\sigma_{2j} - 4a^\sigma_{2j-1} + 2b^\sigma_{2j}}{4^{j+1}}. 
\end{align*}
\end{lemma}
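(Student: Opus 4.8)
The plan is to derive Lemma \ref{lem:tran pi -> ai} by combining the two previously established transition relations: the one from the basis \eqref{2nd basis Osigma} to the basis \eqref{eq:basis Delta} given in Lemma \ref{lem:xyz otimes (2t-1)}, and the one from \eqref{1st basis Osigma} to \eqref{2nd basis Osigma} given in equations \eqref{tran:a_2i->x(i)}--\eqref{tran:a_2i+1->x(i)}. Concretely, the composite of ``$\{a^\sigma_{2i},b^\sigma_{2i},a^\sigma_{2i+1}\}\to\{\bx_i,\by_i,\bz_i\}$'' with ``$\{\bx_i,\by_i,\bz_i\}\to\{x\otimes(2t-1)^i,\,y\otimes t(2t-1)^i,\,z\otimes(1-t)(2t-1)^i\}$'' is exactly the transition map we want, so the result should follow by substitution.

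First I would recall \eqref{tran:x(i)->pi}, which expresses $x\otimes(2t-1)^i=\sum_{j=0}^{\lfloor i/2\rfloor}\bx_{i-2j}/(-4)^{i-2j}$, and then substitute $\bx_{i-2j}=a^\sigma_{2(i-2j)}+4b^\sigma_{2(i-2j)-2}$ from \eqref{eq(1):bx sigma}; reindexing the sum term-by-term gives the claimed formula for $x\otimes(2t-1)^i$ with summand $(a^\sigma_{2i-4j}+4b^\sigma_{2i-4j-2})/(-4)^{i-2j}$. For the $y$-component, I would start from \eqref{tran:y(i)->pi}, namely $y\otimes t(2t-1)^i=\sum_{j=0}^i\by_j/(-4)^{j+1}$, and substitute $\by_j=a^\sigma_{2j+1}+2a^\sigma_{2j}+4a^\sigma_{2j-1}-2b^\sigma_{2j}$ from \eqref{eq(2):bx sigma}. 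For the $z$-component, I would start from \eqref{tran:z(i)->pi}, $z\otimes(1-t)(2t-1)^i=(-1)^{i+1}\sum_{j=0}^i\bz_j/4^{j+1}$, and substitute $\bz_j=a^\sigma_{2j+1}+2a^\sigma_{2j}-4a^\sigma_{2j-1}+2b^\sigma_{2j}$ from \eqref{eq(3):bx sigma}. Each of these three substitutions is a purely mechanical rewriting that terminates with precisely the stated right-hand sides, once one keeps careful track of the edge conventions $a^\sigma_{-1}:=0$, $b^\sigma_{-2}:=0$.

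The only genuine subtlety — and the step I expect to be the main obstacle — is bookkeeping on the $x$-component: the sum in \eqref{tran:x(i)->pi} runs over $i-2j$ decreasing by $2$ as $j$ increases, and substituting \eqref{eq(1):bx sigma} doubles the index spacing, so one must verify that the resulting indices $2i-4j$ and $2i-4j-2$ land exactly as written and that the upper limit $\lfloor i/2\rfloor$ is preserved under the reindexing. One should double-check parity: for $i$ even the innermost term ($j=\lfloor i/2\rfloor$) contributes $\bx_0=a^\sigma_0$ (since $b^\sigma_{-2}=0$), while for $i$ odd it contributes $\bx_1=a^\sigma_2+4b^\sigma_0$, and both cases agree with the closed form. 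Once this alignment is confirmed, the $y$- and $z$-cases require no reindexing at all, so they follow immediately. I would therefore write the proof as: ``We show the formula for $x\otimes(2t-1)^i$; substitute \eqref{eq(1):bx sigma} into \eqref{tran:x(i)->pi} and simplify. The remaining two identities are obtained analogously by substituting \eqref{eq(2):bx sigma}, \eqref{eq(3):bx sigma} into \eqref{tran:y(i)->pi}, \eqref{tran:z(i)->pi}, respectively.''
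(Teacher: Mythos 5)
Your proposal is correct and is essentially the paper's own proof: the paper proves this lemma by citing Lemma \ref{lem:xyz otimes (2t-1)} together with \eqref{eq:a(i) tet}--\eqref{eq:c(i) tet}, i.e.\ precisely the substitution of $\bx_i,\by_i,\bz_i$ (expressed via $a^\sigma$, $b^\sigma$) into \eqref{tran:x(i)->pi}--\eqref{tran:z(i)->pi} that you describe, and your index bookkeeping on the $x$-component is accurate.
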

\begin{proof}
By Lemma \ref{lem:xyz otimes (2t-1)} along with \eqref{eq:a(i) tet}--\eqref{eq:c(i) tet}.
\end{proof}

We finish this section with a comment.

\begin{proposition}
Recall the basis \eqref{eq:basis Delta} for the Lie algebra $\mathcal{O}^\sigma$.
The Lie bracket acts on this basis as follows.
For $i,j \in \mathbb{N}_0$,
\begin{align}
	[x\otimes (2t-1)^i, x\otimes (2t-1)^j] & = 0, \label{prop Osigma: [x,x]}\\
	[y\otimes t(2t-1)^i, y\otimes t(2t-1)^j] & = 0, \label{prop Osigma: [y,y]}\\
	[z\otimes (1-t)(2t-1)^i, z\otimes (1-t)(2t-1)^j] & = 0. \label{prop Osigma: [z,z]}
\end{align}
Moreover,
\begin{align}
	[x\otimes (2t-1)^i, y\otimes t(2t-1)^j] 
	& = x \otimes (2t-1)^{i+j} + x \otimes (2t-1)^{i+j+1} + 2y \otimes t(2t-1)^{i+j}, \label{prop Osigma: [x,y]}\\
	[x\otimes (2t-1)^i, z\otimes (1-t)(2t-1)^j] 
	& = x \otimes (2t-1)^{i+j+1} - x \otimes (2t-1)^{i+j} - 2z \otimes (1-t)(2t-1)^{i+j},\label{prop Osigma: [x,z]}\\
\begin{split}
	[y\otimes t(2t-1)^i, z\otimes (1-t)(2t-1)^j] 
	& = y \otimes t(2t-1)^{i+j} - y \otimes t(2t-1)^{i+j+1} \\
	& \qquad + z \otimes (1-t)(2t-1)^{i+j} + z \otimes (1-t)(2t-1)^{i+j+1}, \label{prop Osigma: [y,z]}
\end{split}
\end{align}
\end{proposition}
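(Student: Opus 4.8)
The plan is to verify all six families of identities by passing to the vector-valued form in $L(\sl2)^+$ and computing directly with the rules of Lemma~\ref{v-val comp}, exactly as in the proof of Theorem~\ref{2nd main thm}. The three ``diagonal'' relations \eqref{prop Osigma: [x,x]}--\eqref{prop Osigma: [z,z]} are immediate from $[u,u]=0$ for $u\in\sl2$ together with \eqref{def:eq[u,v]x(ab)}: for instance $[x\otimes a, x\otimes b]=[x,x]\otimes ab=0$, and similarly for $y$ and $z$. So the content is in \eqref{prop Osigma: [x,y]}--\eqref{prop Osigma: [y,z]}.

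For \eqref{prop Osigma: [x,y]}, I would write $x\otimes(2t-1)^i = \begin{pmatrix} p_i \\ 0 \\ 0 \end{pmatrix}$ and $y\otimes t(2t-1)^j = \begin{pmatrix} 0 \\ tp_j \\ 0 \end{pmatrix}$ in the notation $p_i=(2t-1)^i$ of \eqref{eq:poly p_i}, apply Lemma~\ref{v-val comp}(i) to get $2\begin{pmatrix} tp_{i+j} \\ tp_{i+j} \\ 0\end{pmatrix}$, and then use the identity $2tp_{i+j}=p_{i+j}+p_{i+j+1}$ from \eqref{eq:algebra p_i} on the first coordinate, leaving the second coordinate as $2tp_{i+j}$; this reproduces the right-hand side of \eqref{prop Osigma: [x,y]}. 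For \eqref{prop Osigma: [x,z]}, I would instead use Lemma~\ref{v-val comp}(iii) with $f=p_i$ and $h=(1-t)p_j$, obtaining $2\begin{pmatrix} (1-t)p_{i+j} \\ 0 \\ (1-t)p_{i+j}\end{pmatrix}$, and apply $2(1-t)p_{i+j}=p_{i+j}-p_{i+j+1}$ to the first coordinate (keeping the third as $-2(1-t)p_{i+j}$ after the sign bookkeeping) to match \eqref{prop Osigma: [x,z]}. For \eqref{prop Osigma: [y,z]}, I would use Lemma~\ref{v-val comp}(ii) with $g=tp_i$ and $h=(1-t)p_j$, getting $2\begin{pmatrix} 0 \\ t(1-t)p_{i+j} \\ t(1-t)p_{i+j}\end{pmatrix}$, then split $2t(1-t)p_{i+j}$ two ways: as $(1-t)\bigl(p_{i+j}-p_{i+j+1}\bigr)/1$ on one line and as $t\bigl(p_{i+j}+p_{i+j+1}\bigr)$-type expressions on the other, using both $2tp=p+p_{\cdot+1}$ and $2(1-t)p=p-p_{\cdot+1}$ from \eqref{eq:algebra p_i}; after collecting terms this yields the four-term right-hand side of \eqref{prop Osigma: [y,z]}.

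Throughout I would package the statement as: by \eqref{def:eq[u,v]x(ab)} every bracket $[u\otimes a, v\otimes b]=[u,v]\otimes ab$, so each left-hand side above lies in $\mathcal{O}^\sigma$ (closure of $\mathcal{O}^\sigma$ under the bracket was noted after Lemma~\ref{def: sigma}), and the computation above rewrites that element in terms of the basis \eqref{eq:basis Delta}; since \eqref{eq:basis Delta} is a basis by Proposition~\ref{lem:basis Delta}, the displayed expressions are the unique such rewritings and the identities follow. I would present the three off-diagonal computations in the compact two-line format used in the proof of Lemma~\ref{lem:xij-like elts in L(sl2)+}, citing Lemma~\ref{v-val comp} and \eqref{eq:algebra p_i} for each simplification rather than expanding every step.

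The only place requiring any care—and the step I expect to be the main obstacle—is the sign/coordinate bookkeeping in \eqref{prop Osigma: [y,z]}, where the raw output $2t(1-t)p_{i+j}$ must be resolved into a sum of a $y\otimes t(2t-1)^\bullet$ part and a $z\otimes(1-t)(2t-1)^\bullet$ part in two different ways on the two nonzero coordinates; one must check that the factorizations are consistent and that no spurious term survives. This is a finite algebraic check in $\mathbb{F}[t]$ using only \eqref{eq:algebra p_i}, so there is no conceptual difficulty, but it is the computation most prone to a sign slip. The analogous checks for \eqref{prop Osigma: [x,y]} and \eqref{prop Osigma: [x,z]} are shorter because only one coordinate needs the $2tp$ or $2(1-t)p$ substitution.
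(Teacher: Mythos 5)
Your proposal is correct and follows essentially the same route as the paper: the diagonal brackets vanish since $[u,u]=0$, and the off-diagonal ones are computed in vector-valued form via Lemma~\ref{v-val comp} and the identities \eqref{eq:algebra p_i}, exactly as the paper does for \eqref{prop Osigma: [x,y]} before declaring the other two similar. The only caution is in \eqref{prop Osigma: [x,z]}: Lemma~\ref{v-val comp}(iii) is stated for $[z\otimes h, x\otimes f]$, so the raw output carries an overall minus sign before the $2(1-t)p_{i+j}=p_{i+j}-p_{i+j+1}$ substitution — the ``sign bookkeeping'' you flag — and with that done your computation matches the stated formula.
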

\begin{proof}
Lines \eqref{prop Osigma: [x,x]}--\eqref{prop Osigma: [z,z]} are clear.
We show the equation \eqref{prop Osigma: [x,y]}.
Abbreviate $p_i = (2t-1)^i$ for $i\in \mathbb{N}_0$.
Express the left-hand side of \eqref{prop Osigma: [x,y]} as the vector-valued form in \eqref{eq:vvf}. 
Evaluate this using Lemma \ref{v-val comp}(i) and \eqref{eq:algebra p_i} to get
\begin{equation*}
	\left[\begin{pmatrix} p_i \\ 0 \\ 0 \end{pmatrix}, \begin{pmatrix} 0 \\ tp_j \\ 0 \end{pmatrix}\right]
	= 2\begin{pmatrix} tp_ip_j \\ tp_ip_j \\ 0 \end{pmatrix}
	= \begin{pmatrix} 2tp_{i+j} \\ 2tp_{i+j} \\ 0 \end{pmatrix}
	= \begin{pmatrix} p_{i+j} \\ 0 \\ 0 \end{pmatrix} + \begin{pmatrix} p_{i+j+1} \\ 0 \\ 0 \end{pmatrix} + 2\begin{pmatrix} 0 \\ tp_{i+j} \\ 0 \end{pmatrix},
\end{equation*}
which is equal to the right-hand side of \eqref{prop Osigma: [x,y]}.
We have shown the equation \eqref{prop Osigma: [x,y]}.
Similarly, the equations \eqref{prop Osigma: [x,z]} and \eqref{prop Osigma: [y,z]} follow.
\end{proof}


\appendix
\section{Appendix: some data for $\mathcal{O}'$ and $\mathcal{O}''$}
Recall the Onsager subalgebra $\mathcal{O}$ of $\tet$ generated by $x_{12}$ and $x_{03}$, and recall the automorphisms $\prime=(123)$ and $\prime\prime=(132)$ of $\tet$ from Remark \ref{rmk:O', O''}.
We note that 
\begin{itemize}\itemsep0em 
	\item $\mathcal{O}'$: the Onsager subalgebra of $\tet$ generated by $x_{23}$ and $x_{01}$.
	\item $\mathcal{O}''$: the Onsager subalgebra of $\tet$ generated by $x_{31}$ and $x_{02}$.
\end{itemize}
In this Appendix, we present several data related to $\mathcal{O}'$ and $\mathcal{O}''$.
\begin{lemma} 
Recall the elements $a_i, b_i$ $(i\in \mathbb{N}_0)$ in $\mathcal{O}$ from Definition \ref{def:seq a_i b_i}. 
The actions of $\prime$ and $\prime\prime$ on $a_i, b_i$ are as follows.
\begin{enumerate}[label=(\roman*), font=\normalfont]
	\item $\mathcal{O}':$ 
	\begin{tabular}{ccccccc}
	$\begin{matrix} a'_0  = x_{23}, \\[0.5em] b'_0 = x_{01}, \end{matrix}$ & & &
	$\begin{matrix} a'_{2j-1} = [b'_0, a'_{2j-2}], \\[0.5em] b'_{2j-1} = [a'_0, b'_{2j-2}], \end{matrix}$ & & &
	$\begin{matrix} a'_{2j} = [a'_0, a'_{2j-1}], \\[0.5em] b'_{2j} = [b'_0, b'_{2j-1}] \end{matrix}$ 
	\end{tabular}
	\qquad  $(j=1,2,3,\ldots)$.
	\item $\mathcal{O}'':$ 
	\begin{tabular}{ccccccc}
	$\begin{matrix} a''_0  = x_{31}, \\[0.5em] b''_0 = x_{02}, \end{matrix}$ & & &
	$\begin{matrix} a''_{2j-1} = [b''_0, a''_{2j-2}], \\[0.5em] b''_{2j-1} = [a''_0, b''_{2j-2}], \end{matrix}$ & & &
	$\begin{matrix} a''_{2j} = [a''_0, a''_{2j-1}], \\[0.5em] b''_{2j} = [b''_0, b''_{2j-1}] \end{matrix}$ 
	\end{tabular}
	\qquad  $(j=1,2,3,\ldots)$.
\end{enumerate}
\end{lemma}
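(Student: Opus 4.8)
The plan is to exploit the fact that $\prime$ and $\prime\prime$ are Lie algebra automorphisms of $\tet$, so that they commute with the formation of (nested) Lie brackets. First I would recall from Section~\ref{Intro} that $\prime$ denotes the permutation $(123) \in S_4$ acting on $\tet$ by permuting the indices of the standard generators; in particular $x'_{12} = x_{23}$ and $x'_{03} = x_{01}$. Likewise $\prime\prime = (\prime)^2 = (132)$, so $x''_{12} = x_{31}$ and $x''_{03} = x_{02}$. Since $\mathcal{O}$ is the subalgebra of $\tet$ generated by $x_{12}$ and $x_{03}$, and $\prime$ is an automorphism, $\prime(\mathcal{O})$ is the subalgebra generated by $x'_{12}=x_{23}$ and $x'_{03}=x_{01}$, which is exactly $\mathcal{O}'$ as described in Remark~\ref{rmk:O', O''}; the analogous statement holds for $\mathcal{O}'' = \prime\prime(\mathcal{O})$.

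Next I would apply $\prime$ to the defining relations \eqref{Def:a_i} and \eqref{Def:b_i} of the sequences $\{a_i\}$, $\{b_i\}$. Because $\prime$ preserves the Lie bracket, applying it to $a_{2j-1} = [b_0, a_{2j-2}]$ yields $a'_{2j-1} = [b'_0, a'_{2j-2}]$, and similarly applying it to the relations for $a_{2j}$, $b_{2j-1}$, $b_{2j}$ produces the corresponding primed relations. Combined with the base cases $a'_0 = x_{23}$ and $b'_0 = x_{01}$ recorded above, this gives precisely the recursion asserted in part~(i). An entirely parallel computation, replacing $\prime$ by $\prime\prime$ throughout and using $a''_0 = x_{31}$, $b''_0 = x_{02}$, yields part~(ii). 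If one wishes to be pedantic this can be organized as an induction on $i$, but in fact no induction is needed beyond the observation that an algebra homomorphism carries $[\,\cdot\,,\,\cdot\,]$ to $[\,\cdot\,,\,\cdot\,]$, so each primed/doubly-primed relation is simply the image of the corresponding relation in Definition~\ref{def:seq a_i b_i}.

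I do not expect a genuine obstacle here: the content is essentially that the image of a recursively defined sequence under an automorphism satisfies the same recursion with the generators replaced by their images. The only point that requires care is the bookkeeping of the $S_4$-action, i.e.\ confirming $x'_{12}=x_{23}$, $x'_{03}=x_{01}$, $x''_{12}=x_{31}$, $x''_{03}=x_{02}$ from the explicit description of the permutation action on the standard generators together with $\prime\prime = (\prime)^2$. Once these identifications are in place, the lemma is immediate.
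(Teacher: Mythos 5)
Your proposal is correct and is exactly the argument the paper intends: the appendix data are obtained, as Remark \ref{rmk:O', O''} indicates, by applying the automorphisms $\prime=(123)$ and $\prime\prime=(132)$ to the results for $\mathcal{O}$, and since these are Lie algebra automorphisms they carry the recursion of Definition \ref{def:seq a_i b_i} to the primed and doubly-primed recursions, with the base cases $x'_{12}=x_{23}$, $x'_{03}=x_{01}$, $x''_{12}=x_{31}$, $x''_{03}=x_{02}$ checked from the $S_4$-action. Nothing further is needed.
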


\begin{lemma}
Recall the subspace $U_i$ $(i\in \mathbb{N}_0)$ from Definition \ref{def:U_i}.
The following table holds.
\begin{center}
{\renewcommand{\arraystretch}{1.5}
\begin{tabular}{c|ccccc}
	&  & basis for $U'_i$ & & basis for $U''_i$ & \\
	\hline
	$i$: odd & & $a'_i$ & & $a''_i$ & \\
	$i$: even & & $a'_i, b'_i$ & & $a''_i, b''_i$ &
\end{tabular}} \qquad $(i\in \mathbb{N}_0)$.
\end{center}
\end{lemma}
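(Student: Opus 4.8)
The plan is to obtain the whole table by transporting Lemma~\ref{lem:subspace U_i}(i),(ii) through the automorphisms $\prime=(123)$ and $\prime\prime=(132)$ of $\tet$, exactly as anticipated in Remark~\ref{rmk:O', O''}. First I would pin down the identification of the symbols involved. Since $\prime$ is a Lie algebra automorphism of $\tet$ that sends $x_{12}\mapsto x_{23}$ and $x_{03}\mapsto x_{01}$, it maps the subalgebra $\mathcal{O}$ generated by $x_{12},x_{03}$ isomorphically onto the subalgebra $\mathcal{O}'$ generated by $x_{23},x_{01}$. Because $\prime$ commutes with the Lie bracket, the images $a_i^\prime, b_i^\prime$ of the elements from Definition~\ref{def:seq a_i b_i} satisfy precisely the recursions that define $a'_i, b'_i$ in the first lemma of this appendix; hence $a_i^\prime=a'_i$ and $b_i^\prime=b'_i$, and therefore $U_i^\prime=\operatorname{Span}\{a'_i,b'_i\}=U'_i$ for every $i\in\mathbb{N}_0$. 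The identical argument with $\prime\prime$ in place of $\prime$ gives $a_i^{\prime\prime}=a''_i$, $b_i^{\prime\prime}=b''_i$, and $U_i^{\prime\prime}=U''_i$.

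With these identifications the table is immediate, since an automorphism carries a basis of a subspace to a basis of its image. For odd $i$, Lemma~\ref{lem:subspace U_i}(i) says $a_i$ is a basis of $U_i$, so $a'_i$ is a basis of $U'_i$ and $a''_i$ is a basis of $U''_i$; for even $i$, Lemma~\ref{lem:subspace U_i}(ii) says $a_i,b_i$ form a basis of $U_i$, so $a'_i,b'_i$ form a basis of $U'_i$ and $a''_i,b''_i$ form a basis of $U''_i$. This accounts for all entries of the table.

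The proof is essentially bookkeeping, and the only step that requires a little care --- the nearest thing to an obstacle --- is the first one: checking that the primed and double-primed elements defined by their own recursions really coincide with the images of $a_i,b_i$ under the respective automorphisms, so that the basis statements of Lemma~\ref{lem:subspace U_i} may legitimately be pushed forward. Once that identification is in place, nothing else is needed.
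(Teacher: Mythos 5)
Your proposal is correct and matches the paper's intended argument: the appendix lemma is stated without proof precisely because, as Remark \ref{rmk:O', O''} indicates, it follows by pushing Lemma \ref{lem:subspace U_i}(i),(ii) forward under the automorphisms $\prime=(123)$ and $\prime\prime=(132)$, after the identification $a_i^{\prime}=a'_i$, $b_i^{\prime}=b'_i$ (and likewise for $\prime\prime$) that you verify from the recursions. Nothing further is needed.
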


\begin{lemma}
The following (i) and (ii) hold.
\begin{enumerate}[label=(\roman*), font=\normalfont]
\item The Onsager subalgebra $\mathcal{O}'$ satisfies
	\begin{equation*}
		\mathcal{O}' = \sum_{i\in \mathbb{N}_0} U'_i, \qquad
		\qquad (\text{\rm direct sum}).
	\end{equation*}
	Moreover, $\mathcal{O}'$ has a basis $a'_{2i}, b'_{2i}, a'_{2i+1}$ $(i \in \mathbb{N}_0)$.
\item The Onsager subalgebra $\mathcal{O}''$ satisfies
	\begin{equation*}
		\mathcal{O}'' = \sum_{i\in \mathbb{N}_0} U''_i, \qquad
		\qquad (\text{\rm direct sum}).
	\end{equation*}
	Moreover, $\mathcal{O}'$ has a basis $a''_{2i}, b''_{2i}, a''_{2i+1}$ $(i \in \mathbb{N}_0)$.
\end{enumerate}
\end{lemma}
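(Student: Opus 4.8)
The plan is to obtain both parts by transporting the corresponding facts for $\mathcal{O}$ along the automorphisms $\prime=(123)$ and $\prime\prime=(132)$ of $\tet$. The two facts I would start from are: $\mathcal{O}=\sum_{i\in\mathbb{N}_0}U_i$ is a direct sum (Lemma~\ref{lem:subspace U_i}(iv)), and $a_{2i},b_{2i},a_{2i+1}$ $(i\in\mathbb{N}_0)$ is a basis for $\mathcal{O}$ (Theorem~\ref{thm:1st basis for O}). The whole proof amounts to applying a Lie algebra automorphism of $\tet$ to these two facts and checking that it does what we expect on the relevant elements and subspaces.

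First I would record that $\prime$ carries the distinguished generators of $\mathcal{O}$ to those of $\mathcal{O}'$ in the correct order. Since $\prime=(123)$ fixes $0$ and sends $1\mapsto2\mapsto3\mapsto1$, the description of the $S_4$-action on the standard generators from Section~\ref{Intro} gives $x_{12}'=x_{23}$ and $x_{03}'=x_{01}$. As $\prime$ is a Lie algebra automorphism of $\tet$, and $\mathcal{O}$ is the subalgebra generated by $x_{12},x_{03}$ while $\mathcal{O}'$ is the subalgebra generated by $x_{23},x_{01}$ (Remark~\ref{rmk:O', O''}), the restriction of $\prime$ is a Lie algebra isomorphism $\mathcal{O}\to\mathcal{O}'$. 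A Lie algebra homomorphism commutes with brackets, so a straightforward induction on $i$, using the recursions \eqref{Def:a_i}, \eqref{Def:b_i} together with the matching recursions defining $a_i',b_i'$ recorded earlier in this Appendix, shows that $(a_i)'=a_i'$ and $(b_i)'=b_i'$ for all $i\in\mathbb{N}_0$; consequently $(U_i)'=U_i'$ for all $i$ by Definition~\ref{def:U_i}. The same argument applied to $\prime\prime=(132)$, which fixes $0$ and sends $1\mapsto3\mapsto2\mapsto1$, gives $x_{12}''=x_{31}$ and $x_{03}''=x_{02}$, a Lie algebra isomorphism $\mathcal{O}\to\mathcal{O}''$, and $(a_i)''=a_i''$, $(b_i)''=b_i''$, $(U_i)''=U_i''$.

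For part (i), I would then apply the linear isomorphism $\prime\colon\mathcal{O}\to\mathcal{O}'$ to the direct sum $\mathcal{O}=\sum_{i\in\mathbb{N}_0}U_i$; since a linear isomorphism sends an internal direct sum to an internal direct sum, this yields $\mathcal{O}'=\sum_{i\in\mathbb{N}_0}(U_i)'=\sum_{i\in\mathbb{N}_0}U_i'$ with the sum again direct. Similarly the image under $\prime$ of the basis $a_{2i},b_{2i},a_{2i+1}$ $(i\in\mathbb{N}_0)$ is a basis for $\mathcal{O}'$, and by the previous paragraph that image is exactly $a_{2i}',b_{2i}',a_{2i+1}'$ $(i\in\mathbb{N}_0)$. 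Part (ii) is proved the same way with $\prime$ replaced by $\prime\prime$.

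There is no genuine obstacle in this argument; the only point that deserves care is the bookkeeping in the middle paragraph, namely that the automorphism sends the ordered pair $(x_{12},x_{03})$ to $(x_{23},x_{01})$ (respectively $(x_{31},x_{02})$), so that the defining recursions match term by term and the families $\{a_i\}_{i\in\mathbb{N}_0}$, $\{b_i\}_{i\in\mathbb{N}_0}$, $\{U_i\}_{i\in\mathbb{N}_0}$ are transported index by index. This is immediate from the explicit $S_4$-action on the standard generators given in Section~\ref{Intro}.
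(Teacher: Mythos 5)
Your proof is correct and is essentially the paper's own argument: the paper (see Remark \ref{rmk:O', O''} and the Appendix) obtains these statements precisely by applying the automorphisms $\prime=(123)$ and $\prime\prime=(132)$ to Lemma \ref{lem:subspace U_i}(iv) and Theorem \ref{thm:1st basis for O}, exactly as you do. Your bookkeeping that $\prime$ sends the ordered pair $(x_{12},x_{03})$ to $(x_{23},x_{01})$ (and $\prime\prime$ to $(x_{31},x_{02})$), so that $a_i'$, $b_i'$, $U_i'$ are the images of $a_i$, $b_i$, $U_i$ index by index, is the right and only point needing care.
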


\begin{lemma}\label{A.lem:bxbybz}
Recall the elements $\bx_i,\by_{i},\bz_{i}$ $(i\in \mathbb{N}_0)$ in $\mathcal{O}$ from \eqref{eq:a(i) tet}--\eqref{eq:c(i) tet}.
The actions of $\prime$ and $\prime\prime$ on these elements are as follows.
$$
{\renewcommand{\arraystretch}{1.5}
\begin{tabular}{cc|cc}
	$\text{action of }\prime$ &&& $\text{action of }\prime\prime$ \\
	\hline
	$\begin{array}{l} 	
	\bx_i'  = a'_{2i} + 4 b'_{2i-2}, \\
	\by_i'  = a'_{2i+1} + 2a'_{2i} + 4a'_{2i-1} - 2b'_{2i}, \\
	\bz_i'  = a'_{2i+1} + 2a'_{2i} - 4a'_{2i-1} + 2b'_{2i}, 
	\end{array}$
	&&& $\begin{array}{l} 	
		\bx_i'' = a''_{2i} + 4 b''_{2i-2}, \\
		\by_i'' = a''_{2i+1} + 2a''_{2i} + 4a''_{2i-1} - 2b''_{2i}, \\
		\bz_i'' = a''_{2i+1} + 2a''_{2i} - 4a''_{2i-1} + 2b''_{2i}, 
	\end{array}$ \\
\end{tabular}} 
$$
where $a'_{-1}:=0$, $b'_{-2}:=0$, $a''_{-1}:=0$, $b''_{-2}:=0$.
\end{lemma}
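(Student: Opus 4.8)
The plan is to observe that the only ingredient needed is the $\mathbb{F}$-linearity of the automorphisms $\prime=(123)$ and $\prime\prime=(132)$ of $\tet$, together with the identification of $\prime(a_j),\prime(b_j)$ (resp.\ $\prime\prime(a_j),\prime\prime(b_j)$) with the elements $a'_j,b'_j$ (resp.\ $a''_j,b''_j$) of $\mathcal{O}'$ (resp.\ $\mathcal{O}''$) established in the first lemma of this appendix. Concretely, I would start from the defining equations \eqref{eq:a(i) tet}, \eqref{eq:b(i) tet}, \eqref{eq:c(i) tet} of $\bx_i,\by_i,\bz_i$ in Definition \ref{Def:x_ij like in O} and apply $\prime$ to each, term by term: for instance $\prime(\bx_i)=\prime(a_{2i})+4\,\prime(b_{2i-2})$, and then rewrite $\prime(a_{2i})=a'_{2i}$ and $\prime(b_{2i-2})=b'_{2i-2}$. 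This immediately yields $\bx_i'=a'_{2i}+4b'_{2i-2}$, and the analogous manipulation of \eqref{eq:b(i) tet} and \eqref{eq:c(i) tet} gives the stated formulas for $\by_i'$ and $\bz_i'$. The boundary conventions are preserved automatically, since $\prime$ is linear and hence $\prime(0)=0$, so that $a'_{-1}=0$ and $b'_{-2}=0$ match the conventions $a_{-1}:=0$, $b_{-2}:=0$ used in Definition \ref{Def:x_ij like in O}.

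Repeating the identical computation with $\prime\prime$ in place of $\prime$, and invoking the description of $\prime\prime(a_j),\prime\prime(b_j)$ from the same appendix lemma, produces the three formulas for $\bx_i'',\by_i'',\bz_i''$. I do not anticipate any genuine obstacle: nothing beyond linearity and the already-recorded recursions for $a'_j,b'_j,a''_j,b''_j$ is required, so the argument is a one-line application of the automorphism property to \eqref{eq:a(i) tet}--\eqref{eq:c(i) tet}. (If one wished, the same identities could be checked instead by applying the automorphism $\prime$ of $\L$ from Lemma \ref{lem:auto L(sl2)+} to the vector-valued forms \eqref{eq:bx(i)}, \eqref{eq:by,bz(i)}, but this is unnecessary.)
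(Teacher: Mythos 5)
Your argument is correct and is exactly the paper's (implicit) route: the paper offers no separate proof for this appendix lemma, relying—as stated in Remark \ref{rmk:O', O''}—on applying the automorphisms $\prime$ and $\prime\prime$ to the results for $\mathcal{O}$, which is precisely your term-by-term application of linearity to \eqref{eq:a(i) tet}--\eqref{eq:c(i) tet} together with the identifications $\prime(a_j)=a'_j$, $\prime(b_j)=b'_j$ (and their double-primed analogues) from the first appendix lemma. Your remark that $\prime(0)=0$ handles the boundary conventions $a'_{-1}=0$, $b'_{-2}=0$ is a correct, if minor, point the paper leaves tacit.
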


\begin{lemma}The following {\rm(i)}, {\rm(ii)} hold.
\begin{enumerate}[label=(\roman*), font=\normalfont]
	\item The Onsager subalgebra $\mathcal{O}'$ has a basis
	\begin{equation*}
	\bx'_{i},\qquad \by'_{i}, \qquad  \bz'_{i}, \qquad \qquad i\in \mathbb{N}_0.
	\end{equation*}
	\item The Onsager subalgebra $\mathcal{O}''$ has a basis
	\begin{equation*}
	\bx''_{i},\qquad \by''_{i}, \qquad  \bz''_{i}, \qquad \qquad i\in \mathbb{N}_0.
	\end{equation*}	
\end{enumerate}
\end{lemma}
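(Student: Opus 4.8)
The plan is to obtain this lemma by transporting the corresponding result for $\mathcal{O}$ through the automorphisms $\prime = (123)$ and $\prime\prime = (132)$ of $\tet$, with no new computation required. First I would recall that, by Proposition \ref{prop:Osigma} (equivalently, by the basis assertion in Theorem \ref{2nd main thm}), the elements $\bx_i, \by_i, \bz_i$ $(i \in \mathbb{N}_0)$ form a basis for the Onsager subalgebra $\mathcal{O}$. Next I would recall from Remark \ref{rmk:O', O''} that $\prime$ and $\prime\prime$, regarded as elements of $\operatorname{Aut}(\tet)$, restrict to linear isomorphisms $\mathcal{O} \to \mathcal{O}'$ and $\mathcal{O} \to \mathcal{O}''$, respectively.

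For part (i): since $\prime$ is a vector-space isomorphism from $\mathcal{O}$ onto $\mathcal{O}'$, it carries the basis $\{\bx_i, \by_i, \bz_i\}_{i \in \mathbb{N}_0}$ of $\mathcal{O}$ to a basis of $\mathcal{O}'$. By Lemma \ref{A.lem:bxbybz}, the $\prime$-images of $\bx_i, \by_i, \bz_i$ are exactly $\bx'_i, \by'_i, \bz'_i$; hence $\{\bx'_i, \by'_i, \bz'_i\}_{i \in \mathbb{N}_0}$ is a basis for $\mathcal{O}'$. Part (ii) is proved identically, using $\prime\prime$ in place of $\prime$ together with the double-primed formulas in Lemma \ref{A.lem:bxbybz}.

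There is essentially no obstacle here; the only point that needs to be stated carefully is that the symbols $\bx'_i, \by'_i, \bz'_i$ and $\bx''_i, \by''_i, \bz''_i$ are, by definition in Lemma \ref{A.lem:bxbybz}, the $\prime$- and $\prime\prime$-images of $\bx_i, \by_i, \bz_i$. Once that is invoked, the proof is just the standard fact that the image of a basis under a linear isomorphism is a basis, applied to the restrictions of $\prime$ and $\prime\prime$.
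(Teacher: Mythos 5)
Your proposal is correct and matches the paper's intended argument: the appendix results for $\mathcal{O}'$ and $\mathcal{O}''$ are obtained, as stated in Remark \ref{rmk:O', O''}, by applying the automorphisms $\prime$ and $\prime\prime$ to the basis result for $\mathcal{O}$ (Proposition \ref{prop:Osigma}, Theorem \ref{2nd main thm}), with $\bx'_i, \by'_i, \bz'_i$ (resp. $\bx''_i, \by''_i, \bz''_i$) being precisely the images of $\bx_i, \by_i, \bz_i$ as recorded in Lemma \ref{A.lem:bxbybz}. Since an automorphism of $\tet$ restricts to a linear isomorphism $\mathcal{O}\to\mathcal{O}'$ (resp. $\mathcal{O}\to\mathcal{O}''$) and carries a basis to a basis, your argument is complete and requires no further computation.
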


\section{Appendix: the $\sigma$-image of $\mathcal{O}^{\prime}$ and $\mathcal{O}^{\prime\prime}$}

Recall the Lie algebra isomorphism $\sigma: \tet \to L(\sl2)^+$ from Lemma \ref{def: sigma}.

\begin{lemma}
The following {\rm(i)}, {\rm(ii)} hold.
\begin{enumerate}[label=(\roman*), font=\normalfont]
\item Recall a basis $a'_{2i}, b'_{2i}, a'_{2i+1}$ $(i \in \mathbb{N}_0)$ for $\mathcal{O}'$.
	For $i\in \mathbb{N}_0$, the map $\sigma$ sends
	\begin{align*}
		a'_{2i}
		& \quad \longmapsto \quad 
		(-1)^i4^i\begin{pmatrix} (t'-1)(2t'-1)^{i-1} \\ (2t'-1)^i \\ t'(2t'-1)^{i-1} \end{pmatrix}, \\
		b'_{2i}
		& \quad \longmapsto \quad 
		(-1)^i4^i\begin{pmatrix}  (t'-1)(2t'-1)^{i} \\ (2t'-1)^{i-1} \\ t'(2t'-1)^i \end{pmatrix},\\
		a'_{2i+1}
		& \quad \longmapsto \quad 
		2(-1)^{i+1}4^i\begin{pmatrix}  (1-t')(2t'-1)^{i} \\ (2t'-1)^i \\ t'(2t'-1)^{i} \end{pmatrix},
	\end{align*}
	where we interpret $(2t'-1)^{-1}:=0$.
\item Recall a basis $a''_{2i}, b''_{2i}, a''_{2i+1}$ $(i \in \mathbb{N}_0)$ for $\mathcal{O}''$.
	For $i\in \mathbb{N}_0$, the map $\sigma$ sends
	\begin{align*}
		a''_{2i}
		& \quad \longmapsto \quad 
		(-1)^i4^i\begin{pmatrix} t''(2t''-1)^{i-1} \\ (t''-1)(2t''-1)^{i-1} \\ (2t''-1)^i  \end{pmatrix}, \\
		b''_{2i}
		& \quad \longmapsto \quad 
		(-1)^i4^i\begin{pmatrix}  t''(2t''-1)^i \\ (t''-1)(2t''-1)^{i} \\ (2t''-1)^{i-1} \end{pmatrix},\\
		a''_{2i+1}
		& \quad \longmapsto \quad 
		2(-1)^{i+1}4^i \begin{pmatrix}  t''(2t''-1)^{i} \\ (1-t'')(2t''-1)^{i} \\ (2t''-1)^i  \end{pmatrix},
	\end{align*}
	where we interpret $(2t''-1)^{-1}:=0$.	
\end{enumerate}
\end{lemma}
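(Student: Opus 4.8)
The plan is to deduce both (i) and (ii) from the corresponding statement for $\mathcal{O}$, namely Lemma \ref{lem:G,H U_i}, by transporting it through the order-three automorphism $\prime$ of $L(\sl2)^+$ from Lemma \ref{lem:auto L(sl2)+}. The crucial input is the commuting square of Lemma \ref{lem:auto L(sl2)+}: as maps $\tet \to L(\sl2)^+$ we have $\sigma \circ (123) = {\prime} \circ \sigma$, and hence also $\sigma \circ (132) = {\prime\prime} \circ \sigma$, since $(132) = (123)^2$ in $S_4$ and $\prime$ has order three.

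First I would record that, as elements of $\tet$, the sequences $\{a'_i, b'_i\}_{i\in\mathbb{N}_0}$ and $\{a''_i, b''_i\}_{i\in\mathbb{N}_0}$ are precisely the images of $\{a_i, b_i\}_{i\in\mathbb{N}_0}$ under the automorphisms $(123)$ and $(132)$. Indeed, applying a Lie algebra automorphism to the defining recursion \eqref{Def:a_i}--\eqref{Def:b_i} of Definition \ref{def:seq a_i b_i}, together with $x'_{12} = x_{23}$, $x'_{03} = x_{01}$ (resp. $x''_{12} = x_{31}$, $x''_{03} = x_{02}$), reproduces exactly the recursions recorded in Appendix A; hence $a'_i = (a_i)^{(123)}$, $b'_i = (b_i)^{(123)}$, and similarly with $(132)$ for the double primes.

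Combining these two observations, $\sigma(a'_{2i}) = \sigma\big((a_{2i})^{(123)}\big) = \big(\sigma(a_{2i})\big)'$, where the outer prime is now the automorphism of $L(\sl2)^+$; likewise for $b'_{2i}$ and $a'_{2i+1}$. So it remains to apply the rule $(f,g,h)^\top{}' = (h', f', g')^\top$ from Section \ref{sec:vvf:L(sl2)+}, with $t' = 1 - t^{-1}$ as in \eqref{auto_A}, to each of the three vector-valued expressions \eqref{eq:Gk(x_12)}, \eqref{eq:Hk(x_03)}, \eqref{eq:[x_03,Gk]}. This permutes the three coordinates cyclically and substitutes $t \mapsto t'$ in every entry, and reading off the result yields exactly the three displayed formulas for $a'_{2i}, b'_{2i}, a'_{2i+1}$ in part (i). For part (ii) one applies the same prime rule twice (equivalently, uses $\sigma(a''_{2i}) = (\sigma(a_{2i}))''$ with $t'' = (1-t)^{-1}$), which performs the double cyclic shift $(f,g,h) \mapsto (g'', h'', f'')$ together with $t \mapsto t''$, and gives the formulas in part (ii).

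There is no serious obstacle; the statement is essentially a transcription of Lemma \ref{lem:G,H U_i} under a known symmetry. The only point demanding care is the bookkeeping: tracking the cyclic coordinate permutation simultaneously with the substitution $t \mapsto t'$ (resp. $t \mapsto t''$), and verifying that the conventions $(2t-1)^{-1} := 0$ pass correctly to $(2t'-1)^{-1} := 0$ (resp. $(2t''-1)^{-1} := 0$). To guard against an off-by-one error in the exponents $i$ versus $i-1$, I would write out the $i = 0, 1$ cases explicitly and check them against the shifted tables.
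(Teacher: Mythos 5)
Your proposal is correct and matches the paper's intended argument: the Appendix lemmas are obtained exactly by transporting Lemma \ref{lem:G,H U_i} through the automorphisms $(123)$ and $(132)$ via the commuting square of Lemma \ref{lem:auto L(sl2)+}, as indicated in Remark \ref{rmk:O', O''}. Your explicit check that the prime rule $(f,g,h)^\top \mapsto (h',f',g')^\top$ (applied once, then twice) reproduces the displayed coordinates is exactly the verification the paper leaves implicit.
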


\begin{lemma}\label{B.lem:bxbybz}
The following {\rm(i)}, {\rm(ii)} hold.
\begin{enumerate}[label=(\roman*), font=\normalfont]
\item Recall a basis $\bx'_{i}, \by'_{i}, \bz'_{i}$ $(i \in \mathbb{N}_0)$ for $\mathcal{O}'$.
	For $i\in \mathbb{N}_0$, the map $\sigma$ sends
	\begin{align*}
		\bx'_{i}
		& \quad \longmapsto \quad 
		(-1)^i4^i\begin{pmatrix} 0 \\ (2t'-1)^i - (2t'-1)^{i-2} \\ 0 \end{pmatrix},\\ 
		\by'_{i}
		& \quad \longmapsto \quad 
		(-1)^{i+1}4^{i+1} \begin{pmatrix} 0 \\ 0 \\ t'(2t'-1)^{i} - t'(2t'-1)^{i-1}  \end{pmatrix},\\
		\bz'_{i}
		& \quad \longmapsto \quad 
		(-1)^{i+1}4^{i+1} \begin{pmatrix} (1-t')(2t'-1)^i+(1-t')(2t'-1)^{i-1} \\ 0 \\ 0 \end{pmatrix},
	\end{align*}
	where we interpret $(2t'-1)^{-1}:=0$ and $(2t'-1)^{-2}:=0$.
	The $\sigma$-images of the elements $\bx'_{i}$, $\by'_{i}$, $\bz'_{i}$ $(i\in \mathbb{N}_0)$ form a basis for $\mathcal{O}^{\prime\sigma}$.
\item Recall a basis $\bx''_{i}, \by''_{i}, \bz''_{i}$ $(i \in \mathbb{N}_0)$ for $\mathcal{O}''$.
	For $i\in \mathbb{N}_0$, the map $\sigma$ sends
	\begin{align*}
		\bx''_{i}
		& \quad \longmapsto \quad 
		(-1)^i4^i\begin{pmatrix} 0 \\ 0 \\ (2t''-1)^i - (2t''-1)^{i-2}  \end{pmatrix},\\ 
		\by''_{i}
		& \quad \longmapsto \quad
		(-1)^{i+1}4^{i+1} \begin{pmatrix} t''(2t''-1)^{i} - t''(2t''-1)^{i-1} \\ 0 \\ 0  \end{pmatrix},\\
		\bz''_{i}
		& \quad \longmapsto \quad 
		(-1)^{i+1}4^{i+1} \begin{pmatrix} 0 \\ (1-t'')(2t''-1)^i+(1-t'')(2t''-1)^{i-1} \\ 0 \end{pmatrix},%
	\end{align*}
	where we interpret $(2t''-1)^{-1}:=0$ and $(2t''-1)^{-2}:=0$.	
	The $\sigma$-images of the elements $\bx''_{i}$, $\by''_{i}$, $\bz''_{i}$ $(i\in \mathbb{N}_0)$ form a basis for $\mathcal{O}^{\prime\prime \sigma}$.
\end{enumerate}
\end{lemma}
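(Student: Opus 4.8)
The plan is to deduce everything from the material already developed for $\mathcal{O}$ by transporting it through the order-three automorphism $\prime$ of $\L$ from Lemma~\ref{lem:auto L(sl2)+}. Recall that $\prime\colon\tet\to\tet$ (the permutation $(123)$) carries $\mathcal{O}$ onto $\mathcal{O}'$ and $\mathcal{O}'$ onto $\mathcal{O}''$ (Remark~\ref{rmk:O', O''}), and that by Lemma~\ref{A.lem:bxbybz} the elements $\bx'_i,\by'_i,\bz'_i$ (resp. $\bx''_i,\by''_i,\bz''_i$) are exactly the $\prime$-images (resp. $\prime\prime$-images) of $\bx_i,\by_i,\bz_i$. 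Since the diagram in Lemma~\ref{lem:auto L(sl2)+} commutes, applying $\sigma$ gives $(\bx'_i)^\sigma=\big((\bx_i)^\sigma\big)'$, and similarly for $\by$, $\bz$ and for the double prime, where the prime on the right is the automorphism of $\L$ acting on vector-valued forms by
\[
	\begin{pmatrix} f \\ g \\ h \end{pmatrix}'=\begin{pmatrix} h' \\ f' \\ g' \end{pmatrix},
\]
with the entrywise substitution $t\mapsto t'=1-t^{-1}$ (equivalently, for $\prime\prime$, $t\mapsto t''=(1-t)^{-1}$).

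The first step is to apply this recipe to the formulas of Lemma~\ref{lem:xij-like elts in L(sl2)+}. Starting from $(\bx_i)^\sigma=(-1)^i4^i\big((2t-1)^i-(2t-1)^{i-2},\,0,\,0\big)^\top$, the cyclic shift of coordinates moves the single nonzero entry into the middle (the $y$-)slot, while the substitution $t\mapsto t'$ turns $(2t-1)^i$ into $(2t'-1)^i$ — this is simply the fact that $\prime$ is an $\mathbb{F}$-algebra homomorphism of $\mathcal{A}$ fixing $1$ — yielding the claimed expression for $(\bx'_i)^\sigma$. The same two bookkeeping moves applied to $(\by_i)^\sigma$ and $(\bz_i)^\sigma$ produce $(\by'_i)^\sigma$ (nonzero in the $z$-slot) and $(\bz'_i)^\sigma$ (nonzero in the $x$-slot); composing $\prime$ with itself (so $t\mapsto t''$ and the coordinates shift twice) then gives part~(ii). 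Throughout one keeps the conventions $(2t-1)^{-1}:=0$ and $(2t-1)^{-2}:=0$, which are respected since $\prime$ is linear.

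For the basis assertions, recall from Proposition~\ref{prop:Osigma} that $\{\bx_i,\by_i,\bz_i\mid i\in\mathbb{N}_0\}$ is a basis of $\mathcal{O}^\sigma$. Because $\prime$ is an automorphism of $\L$ and $\sigma\circ(123)=\prime\circ\sigma$, it restricts to a linear isomorphism $\mathcal{O}^\sigma\to\mathcal{O}^{\prime\sigma}$; hence the images $\{\bx'_i,\by'_i,\bz'_i\}$ form a basis of $\mathcal{O}^{\prime\sigma}$, and a further application of $\prime$ gives the basis of $\mathcal{O}^{\prime\prime\sigma}$.

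The only place care is needed is purely notational: the main (mild) obstacle is correctly composing the coordinate cyclic shift coming from $x\mapsto y\mapsto z\mapsto x$ with the entrywise substitutions $t\mapsto t'\mapsto t''$, and verifying that the resulting vectors land in the slots indicated in the statement. No computation beyond Lemma~\ref{lem:xij-like elts in L(sl2)+} and the vector-valued action of $\prime$ is required.
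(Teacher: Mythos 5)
Your proposal is correct and matches the paper's intended argument: the paper gives no separate proof for this appendix lemma, but (per Remark \ref{rmk:O', O''}) the results for $\mathcal{O}'$ and $\mathcal{O}''$ are meant to be obtained exactly as you do, by applying the automorphism $\prime$ of $\L$ and the commuting diagram of Lemma \ref{lem:auto L(sl2)+} to Lemma \ref{lem:xij-like elts in L(sl2)+} and Proposition \ref{prop:Osigma}. Your bookkeeping of the coordinate shift $(f,g,h)\mapsto(h',f',g')$ and the substitutions $t\mapsto t'\mapsto t''$ reproduces the stated formulas, and the basis claim follows as you say since $\prime$ restricts to an isomorphism $\mathcal{O}^\sigma\to\mathcal{O}^{\prime\sigma}$.
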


\begin{corollary}
We have the following tables:
\begin{equation*}
{\renewcommand{\arraystretch}{1.5}
\begin{tabular}{c|cc}
	$X$ & & a basis for $X$ \\
	\hline
	$X^\sigma_{12}\cap \mathcal{O}^{\prime\sigma}$ & & $\{\bz'_{i} \mid i \in \mathbb{N}_0\}$\\
	$X^\sigma_{23}\cap \mathcal{O}^{\prime\sigma}$ & & $\{\bx'_{i} \mid i \in \mathbb{N}_0\}$\\
	$X^\sigma_{31}\cap \mathcal{O}^{\prime\sigma}$ & & $\{\by'_{i} \mid i \in \mathbb{N}_0\}$
\end{tabular},
\qquad \qquad
\begin{tabular}{c|cc}
	$X$ & & a basis for $X$ \\
	\hline
	$X^\sigma_{12}\cap \mathcal{O}^{\prime\prime\sigma}$ & & $\{\by''_{i} \mid i \in \mathbb{N}_0\}$\\
	$X^\sigma_{23}\cap \mathcal{O}^{\prime\prime\sigma}$ & & $\{\bz''_{i} \mid i \in \mathbb{N}_0\}$\\
	$X^\sigma_{31}\cap \mathcal{O}^{\prime\prime\sigma}$ & & $\{\bx''_{i} \mid i \in \mathbb{N}_0\}$
\end{tabular}
}
\end{equation*}
\end{corollary}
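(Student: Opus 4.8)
The plan is to derive the corollary from Lemma~\ref{lem:Lij cap O} by transporting along the order-three automorphism $\prime$ of $\L$ from Lemma~\ref{lem:auto L(sl2)+}, together with the commuting square relating $\prime$ on $\tet$, $\prime$ on $\L$, and $\sigma$. First I would record how the permutation $\prime=(123)\in S_4$ moves the relevant index pairs: it sends $\{1,2\}\mapsto\{2,3\}\mapsto\{3,1\}\mapsto\{1,2\}$, while $\{0,3\}\mapsto\{0,1\}\mapsto\{0,2\}\mapsto\{0,3\}$. Since $\prime$ is an automorphism of $\tet$, an element is $x_{ij}$-like if and only if its $\prime$-image is $x'_{ij}$-like; hence $\prime$ carries $X_{12}\to X_{23}\to X_{31}\to X_{12}$, and it carries $\mathcal{O}$ (generated by $x_{12},x_{03}$) to $\mathcal{O}'$ (generated by $x_{23},x_{01}$) and $\mathcal{O}'$ to $\mathcal{O}''$. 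Transporting through $\sigma$ and using the commuting square, the automorphism $\prime$ of $\L$ sends $X^\sigma_{12}\cap\mathcal{O}^\sigma$ onto $X^\sigma_{23}\cap\mathcal{O}^{\prime\sigma}$, sends $X^\sigma_{23}\cap\mathcal{O}^\sigma$ onto $X^\sigma_{31}\cap\mathcal{O}^{\prime\sigma}$, and sends $X^\sigma_{31}\cap\mathcal{O}^\sigma$ onto $X^\sigma_{12}\cap\mathcal{O}^{\prime\sigma}$; the corresponding assignments for $\prime\prime=(132)$ are obtained by applying $\prime$ twice.

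Next I would apply $\prime$ to the three bases furnished by Lemma~\ref{lem:Lij cap O}. By the definition of the primed vectors (Lemma~\ref{A.lem:bxbybz}, read off in $\L$ via Lemma~\ref{B.lem:bxbybz}), the automorphism $\prime$ sends $\bx_i\mapsto\bx'_i$, $\by_i\mapsto\by'_i$, $\bz_i\mapsto\bz'_i$, and being a linear isomorphism it carries bases to bases. Hence $\{\bx'_i\mid i\in\mathbb{N}_0\}$ is a basis for $X^\sigma_{23}\cap\mathcal{O}^{\prime\sigma}$, $\{\by'_i\mid i\in\mathbb{N}_0\}$ for $X^\sigma_{31}\cap\mathcal{O}^{\prime\sigma}$, and $\{\bz'_i\mid i\in\mathbb{N}_0\}$ for $X^\sigma_{12}\cap\mathcal{O}^{\prime\sigma}$, which is precisely the left-hand table. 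Applying $\prime$ once more (equivalently, applying $\prime\prime$ directly to Lemma~\ref{lem:Lij cap O}) gives $\{\bx''_i\}$ as a basis for $X^\sigma_{31}\cap\mathcal{O}^{\prime\prime\sigma}$, $\{\by''_i\}$ for $X^\sigma_{12}\cap\mathcal{O}^{\prime\prime\sigma}$, and $\{\bz''_i\}$ for $X^\sigma_{23}\cap\mathcal{O}^{\prime\prime\sigma}$, which is the right-hand table.

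A purely computational alternative would skip the automorphism bookkeeping: Lemma~\ref{B.lem:bxbybz} already gives the $\sigma$-images of $\bx'_i,\by'_i,\bz'_i$ in vector-valued form, from which one reads off $\bz'_i\in x\otimes\mathcal{A}=X^\sigma_{12}$, $\bx'_i\in y\otimes\mathcal{A}=X^\sigma_{23}$, $\by'_i\in z\otimes\mathcal{A}=X^\sigma_{31}$ by Lemma~\ref{lem:x_ij-elts L(sl2)+}, together with the evident membership in $\mathcal{O}^{\prime\sigma}$ and linear independence; the spanning claim would then follow from the $\prime$-image of the decomposition \eqref{decomp O sigma}. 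Either way, I expect the only genuine obstacle to be the first step, namely pinning down exactly which $X^\sigma_{ij}$ and which Onsager subalgebra $\prime$ and $\prime\prime$ send things to; once that combinatorics is in place, the conclusion is immediate from Lemma~\ref{lem:Lij cap O} and the linearity of $\prime$.
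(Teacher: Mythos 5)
Your proposal is correct and follows essentially the same route the paper takes for this kind of statement: apply the order-three automorphism $\prime$ (and $\prime\prime$), via the commuting square of Lemma \ref{lem:auto L(sl2)+}, to the bases of $X^\sigma_{ij}\cap\mathcal{O}^\sigma$ from Lemma \ref{lem:Lij cap O}, exactly as the paper does in proving Lemma \ref{lem:X_12, O,O',O''} and as is implicit in Lemma \ref{B.lem:bxbybz}. Your index bookkeeping for where $\prime$ and $\prime\prime$ send the $X^\sigma_{ij}$ and the Onsager subalgebras agrees with the tables, so no gap remains.
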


\section*{Acknowledgement}
The author expresses his deepest gratitude to Paul Terwilliger for many conversations about this
paper.


\begin{thebibliography}{00}


\bibitem{2007BenTer} {G. Benkart, P. Terwilliger}
	The universal central extension of the three-point $\mathfrak{sl}_2$ loop algebra
	Proc. Amer. Math. Soc. 135 (2007), no. 6, 1659--1668.

\bibitem{2000DR}{E. Date, S. S. Roan}, 
	The structure of quotients of the Onsager algebra by closed ideals,
	J. Phys. A: Math. Gen. 33 (2000) 3275--3296.

\bibitem{2007Har}{B. Hartwig},
	The tetrahedron algebra and its finite-dimensional irreducible modules,
	Linear Algebra Appl. \textbf{422} (2007), pp. 219--235.

\bibitem{2007HarTer}{B. Hartwig, P. Terwilliger},
	The Tetrahedron algebra, the Onsager algebra, and the $\mathfrak{sl}_2$ loop algebra,
	J. Algebra \textbf{308} (2007), pp. 840--863.



\bibitem{2007ItoTerLAA} {T. Ito, P. Terwilliger},
	Tridiagonal pairs of Krawtchouk type,
	Linear Algebra Appl., \textbf{427} (2007), pp. 218-233.

\bibitem{2008ItoTerCA} {T. Ito, P. Terwilliger},
	Finite-dimensional irreducible modules for the three-point $\mathfrak{sl}_2$ loop algebra,
	Comm. Algebra \textbf{36} (2008), no. 12, 4557--4598.

\bibitem{2022MorLAA} {J.V.S. Morales},
	Linking the special orthogonal algebra $\mathfrak{so}_4$ and the tetrahedron algebra $\boxtimes$,
	Linear Algebra Appl. \textbf{637} (2022), 212–239.

\bibitem{2014MorPas} {J.V.S. Morales, A. Pascasio},
	An action of the tetrahedron algebra on the standard module for the Hamming graphs and Doob graphs,
	Graphs Combin. \textbf{30} (2014), no. 6, 1513--1527.


\bibitem{1944Ons}{L. Onsager}, 
	Crystal statistics. I. A two-dimensional model with an order-disorder transition,
	Phys. Rev. \textbf{65} (1944) 117--149.

\bibitem{1989Per}{J. H. H. Perk},
	Star-triangle relations, quantum Lax pairs, and higher genus curves, 
	Proceedings of Symposia in Pure Mathematics \textbf{49} 341--354. Amer. Math. Soc., Providence, RI, 1989.




\end{thebibliography}
\end{document}